\documentclass{amsart}

\usepackage{amscd}
\usepackage{amssymb}

\usepackage{color}

\newtheorem{lem}[equation]{Lemma}
\newtheorem{lemma}[equation]{Lemma}

\newtheorem{prop}[equation]{Proposition}

\newtheorem{specthm}{Theorem}

\newtheorem*{thm*}{Theorem}
\newtheorem*{prop*}{Proposition}
\newtheorem*{cor*}{Corollary}
\newtheorem*{lem*}{Lemma}
\newtheorem*{MT*}{Main Theorem}

\newtheorem*{ques*}{Question}

\theoremstyle{definition} %

\newtheorem*{defn*}{Definition}

\newtheorem{eg}[equation]{Example}

\theoremstyle{remark} %
\newtheorem{rmk}[equation]{Remark}

\newtheorem*{rmk*}{Remark}
\newtheorem*{rmks*}{Remarks}

\DeclareMathOperator{\rank}{rank}
\DeclareMathOperator{\car}{char}
\DeclareMathOperator{\Lie}{Lie}

\DeclareMathOperator{\im}{im}

\renewcommand{\P}{\mathbb{P}}
\newcommand{\C}{\mathbb{C}}
\newcommand{\Z}{\mathbb{Z}}
\newcommand{\Q}{\mathbb{Q}}

\newcommand{\lie}{\mathfrak{g}}
\newcommand{\lsub}{\mathfrak{h}}

\newcommand{\qform}[1]{{\left\langle{#1}\right\rangle}}                   
\DeclareMathOperator{\PGL}{PGL}
\DeclareMathOperator{\SL}{SL}
\DeclareMathOperator{\Sp}{Sp}
\DeclareMathOperator{\PSp}{PSp}

\DeclareMathOperator{\SO}{SO}

\DeclareMathOperator{\GL}{GL}
\DeclareMathOperator{\Ad}{Ad}

\DeclareMathOperator{\Spin}{Spin}

\DeclareMathOperator{\Tran}{Tran}

\newcommand{\F}{\mathbb{F}}

\newcommand{\g}{\mathfrak{g}}
\newcommand{\gt}{\tilde{\g}}

\newcommand{\gl}{\mathfrak{gl}}
\renewcommand{\sl}{\mathfrak{sl}}
\newcommand{\pgl}{\mathfrak{pgl}}
\renewcommand{\sp}{\mathfrak{sp}}
\newcommand{\spin}{\mathfrak{spin}}
\newcommand{\so}{\mathfrak{so}}
\newcommand{\go}{\mathfrak{go}}

\newcommand{\tor}{\mathfrak{t}}

\DeclareMathOperator{\Sym}{S}

\DeclareMathOperator{\Spec}{Spec}


\newcommand{\z}{\mathfrak{z}}

\newcommand{\Gm}{\mathbb{G}_m}

\newcommand{\drho}{\mathrm{d}\rho}
\newcommand{\dpi}{\mathrm{d}\pi}
\newcommand{\dq}{\mathrm{d}q}
\newcommand{\dchi}{\mathrm{d}\chi}
\newcommand{\dtau}{\mathrm{d}\tau}

\newcommand{\la}{\lambda}
\newcommand{\ot}{\otimes}

\newcommand{\eps}{\varepsilon}

\newcommand{\Gt}{\widetilde{G}}

\DeclareMathOperator{\Aut}{Aut}

\usepackage{hyperref}
\hypersetup{
  colorlinks=true, 
   linkcolor=red,  
}
\usepackage[hyphenbreaks]{breakurl}

\numberwithin{equation}{section}

\newcommand{\kx}{k^\times}

\begin{document}

\title[Generically free irreducible representations]{Generically free representations II:\\ irreducible representations}

\author[S. Garibaldi]{Skip Garibaldi}
\author[R.M. Guralnick]{Robert M. Guralnick}

\subjclass[2010]{20G05 (primary); 17B10 (secondary)}

%

\begin{abstract}
We determine which faithful irreducible representations $V$ of a simple linear algebraic group $G$ are generically free for $\Lie(G)$, i.e., which $V$ have an open subset consisting of vectors whose stabilizer in $\Lie(G)$ is zero.  This relies on bounds on $\dim V$ obtained in prior work (part I), which reduce the problem to a finite number of possibilities for $G$ and highest weights for $V$, but still infinitely many characteristics.  The remaining cases are handled individually, some by computer calculation.  These results were previously known for fields of characteristic zero, although new phenomena appear in prime characteristic; we provide a shorter proof that gives the result with very mild hypotheses on the characteristic.  (The few characteristics not treated here are settled in part III.)  These results
are  related to questions about invariants and the existence of a stabilizer in general position.
\end{abstract}

\maketitle

Let $G$ be a simple linear algebraic group over a field $k$ acting faithfully on a vector space $V$.  In the special case $k = \C$, there is a striking dichotomy between the properties of irreducible representations $V$ whose dimension is small (say, $\le \dim G$) versus those whose dimension is large, see \cite{AVE}, \cite{Elashvili:can}, \cite{APopov}, etc., for original results and \cite[\S8.7]{PoV} for a survey and bibliography.  For example, if $\dim V < \dim G$, then (trivially) the stabilizer $G_v$ of a vector $v \in V$ is nontrivial.  On the other hand (and nontrivially), for $\dim V$ hardly bigger than $\dim G$, the stabilizer $G_v$ for generic $v \in V$ is trivial, i.e., 1; in this case one says that $V$ is \emph{generically free} or $G$ acts \emph{generically freely} on $V$.
This property has taken on increased importance recently due to applications in Galois cohomology and essential dimension, see \cite{Rei:ICM} and \cite{M:ed} for the theory and \cite{BRV}, \cite{GG:spin}, \cite{Karp:ICM}, \cite{LMMR13}, \cite{Loetscher:fiber}, etc.~for specific applications.

With applications in mind, it is desirable to extend the results on generically free representations to all fields.  In that setting, \cite{GurLawther} has shown that, if $V$ is irreducible and $\dim V$ is large enough, then $G(k)$, the group of $k$-points of $G$, acts generically freely.  Equivalently (when $k$ is algebraically closed), the stabilizer $G_v$ of a generic $v \in V$ is an infinitesimal group scheme.  For applications, one would like to say that $G_v$ is not just infinitesimal but is the \emph{trivial} group scheme, for which one needs to know that the Lie algebra $\g$ of $G$ acts generically freely on $V$, i.e., $\g_v = 0$.  The two conditions are related in that $\dim G_v \le \dim \g_v$, so in particular if $\g_v = 0$, then $G_v$ is finite.  Conversely, if $G_v(k) = 1$, $\g_v$ can be nontrivial (i.e., $G_v$ may be a nontrivial infinitesimal group scheme), see Example \ref{C.0100.3}.

In a previous paper, \cite{GG:large}, we proved that, roughly speaking, if $\dim V$ is large enough (where the bound grows like $(\rank G)^2$), then $V$ is a generically free $\g$-module.  In this paper, we restrict our focus to irreducible modules and settle the question of whether or not $\g$ acts generically freely when $\car k$ is not special.

\begin{specthm} \label{MT}
Let $G$ be a simple linear algebraic group over a field $k$ and let $\rho \!: G \to \GL(V)$ be an irreducible representation of $G$ on which $G$ acts faithfully.   Then $V$ is generically free for $\g$ if and only if $\dim V > \dim G$ and $(G, V, \car k)$ is not in Table \ref{irred.nvfree}.
\end{specthm}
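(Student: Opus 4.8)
The plan is to prove the theorem in two directions. The "only if" direction is the easy one: if $\dim V \le \dim G$ then for dimension reasons the generic stabilizer $\g_v$ has dimension at least $\dim G - \dim V \ge 0$, and whenever this is positive $V$ cannot be generically free; the boundary case $\dim V = \dim G$ will need a separate small argument (one cannot have a dense orbit on which $\g$ acts freely, since that would force $V$ to be a single orbit closure of the right dimension, contradicting, e.g., the existence of the zero vector with nonzero stabilizer). The cases recorded in Table \ref{irred.nvfree} must then be checked by hand to genuinely fail to be generically free in the listed characteristics; this is a finite list, and for each entry one exhibits either a positive-dimensional subalgebra fixing a generic vector or an explicit nonzero element of $\g$ annihilating a dense subset.

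The substance is the "if" direction: assuming $\dim V > \dim G$, $\car k$ not special, and $(G,V,\car k)$ not in the table, show $\g_v = 0$ generically. First I would reduce to $k = \kalg$ algebraically closed, since generic freeness of $\g$ on $V$ is insensitive to field extension. Next, invoke the bounds from part I (\cite{GG:large}): these say that once $\dim V$ exceeds a bound growing like $(\rank G)^2$, $V$ is automatically generically free for $\g$. This reduces the problem to the finitely many pairs $(G, \lambda)$ — $\lambda$ the highest weight of $V$ — with $\dim V$ below that bound but still $> \dim G$; however, each such pair must be considered over every characteristic, since $\dim V$ (hence the relevant bound, and the representation itself) depends on $\car k$ through Weyl modules versus their irreducible quotients.

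For this remaining finite-but-characteristic-dependent list, the strategy is: for each candidate $(G,\lambda)$, pick an explicit vector $v \in V$ (ideally defined over $\Z$, or over the prime field) and compute $\g_v$ directly, showing it is zero. Concretely one writes the representation on a chosen basis, forms the linear map $\g \to V$, $X \mapsto Xv$, and checks its injectivity by a rank computation on the corresponding matrix; doing this over $\Z$ (or over $\Z[1/N]$ for a controlled $N$ absorbing the special characteristics) settles all but finitely many characteristics at once, and the genuinely exceptional small characteristics are handled one at a time, some by computer as the abstract indicates. A useful organizing principle here is semicontinuity: the locus where $\dim \g_v$ jumps up is closed, so it suffices to find one good $v$; and one can often descend to a Levi or use a known generically free sub-representation to shortcut the rank check.

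The main obstacle I expect is the bookkeeping in the last step: the number of $(G, \lambda, p)$ triples left after the part I bounds is not small, the "special" characteristics for each type must be tracked carefully (so that the $\Z$-calculations are valid after inverting the right primes), and for each triple one needs either a clean conceptual argument or a verified computation. Subtleties peculiar to prime characteristic — the distinction between the Weyl module and its simple quotient, the failure of complete reducibility, and the fact that $G_v(k) = 1$ does not imply $\g_v = 0$ (Example \ref{C.0100.3}) — mean one cannot simply import the characteristic-zero classification; the new table entries are exactly the places where these phenomena bite, and verifying that the table is \emph{complete} (no missed exceptions) is where most of the care goes.
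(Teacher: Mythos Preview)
Your overall plan matches the paper's architecture, but the central reduction has a real gap. After invoking part I to bound $\dim V \le b(G)$, you assert this leaves ``finitely many pairs $(G, \lambda)$''. That is false: for fixed $G$ and fixed $p = \car k$, there are infinitely many dominant weights $\lambda$ with $\dim L(\lambda)$ in any given range --- e.g.\ for $\SL_2$ in characteristic $2$, the weights $\lambda = 1 + 2^e$ ($e \ge 1$) all give faithful irreducibles of dimension $4$. The missing ingredient is Steinberg's tensor product theorem: writing $\lambda = \lambda_0 + p\lambda_1$ with $\lambda_0$ restricted, one has $L(\lambda) \cong L(\lambda_0) \otimes L(\lambda_1)^{[p]}$, and as a $\g$-module this is $\dim L(\lambda_1)$ copies of $L(\lambda_0)$. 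Only now does the dimension bound yield finitely many $\g$-module structures to check. The paper is organized around exactly this: first the restricted case (Proposition \ref{MT.restricted}), then the tensor-decomposable case when $\lambda_0, \lambda_1 \in T^*$ (Proposition \ref{MT.tensor}), and finally the residual case when $G$ is not simply connected and $\lambda_0, \lambda_1$ need not lie in $T^*$ (\S\ref{final.sec}, cf.~Example \ref{A3.eg}). Your ``work over $\Z$'' technique cannot substitute for this step, since for non-restricted $\lambda$ the representation depends essentially on $p$ and has no $\Z$-form.

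Two smaller issues. Your sketch for $\dim V = \dim G$ does not give a contradiction: a dense $G$-orbit on $V$ is perfectly consistent with $0$ lying in its closure but not in the orbit itself; the paper instead identifies the unique such $V$ (highest weight the highest root) from L\"ubeck's tables and cites a direct argument that it is not virtually free. And methodologically, the paper's workhorse for the finite list is not exhibiting explicit good vectors but verifying the inequality $\dim x^G + \dim V^x < \dim V$ for noncentral $x$ with $x^{[p]} \in \{0, x\}$ (Lemma \ref{ineq}), handling toral $x$ uniformly across characteristics via Lemma \ref{auld.popov}; your Magma-style search is used only as a fallback for a handful of specific small-characteristic cases.
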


\begin{table}[htbp]
\begin{tabular}{cccrc||cccrc}
$G$&$\car k$&rep'n&$\dim V$&$\dim \g_v$&$G$&$\car k$&high weight&$\dim V$&$\dim \g_v$\\ \hline\hline
$\SL_8/\mu_4$&2&$\wedge^4$&70&3&$\Sp_8$&3&0100&40&2 \\
$\SL_9/\mu_3$&3&$\wedge^3$&84&2&$\Sp_4$&5&11&12&1\\
$\Spin_{16}/\mu_2$&2&half-spin&128&4&$\SL_4$&$p$ odd&$01p^e$, $e \ge 1$&24&1 \\
&&&&&$\SL_4/\mu_2$&2&$012^e$, $e \ge 2$&24&1
\end{tabular}
\caption{Irreducible and faithful representations $V$ with restricted highest weight of simple $G$ with $\dim V > \dim G$ that are not generically free for $\g$, up to graph automorphism.  For each, the stabilizer $\g_v$ of a generic $v \in V$ is a toral subalgebra.  The weights on the right side are numbered as in Table \ref{luebeck.table}.} \label{irred.nvfree}
\end{table}

We say that $G$ acts \emph{faithfully} on $V$ if $\ker \rho$ is the trivial group scheme.  Regardless, there is an induced map $G / \ker \rho \to \GL(V)$ that is a faithful representation of $G / \ker \rho$.

We say that $\car k$ is \emph{special} for $G$ if $\car k = p \ne 0$ and the Dynkin diagram of $G$ has a $p$-valent bond, i.e., if $\car k = 2$ and $G$ has type $B_n$ or $C_n$ for $n \ge 2$ or type $F_4$, or if $\car k = 3$ and $G$ has type $G_2$.  Equivalently, these are the cases where $G$ has a very special isogeny.  This definition of special is as in \cite[\S10]{St:rep}; in an alternative history, these primes might have been called ``extremely bad'' because they are a subset of the very bad primes --- the lone difference is that for $G$ of type $G_2$, the prime 2 is very bad but not special.  In this paper, we prove Theorem \ref{MT} when $\car k$ is \underline{not} special, and we typically assume that $\car k$ is not special in the rest of this paper.  The case where $\car k$ is special has a different flavor and will be handled in a separate paper, part III \cite{GG:special}.


We remark that the exceptions in Theorem \ref{MT}, listed in Table \ref{irred.nvfree}, can be divided into types.  In the left column are three ``$\theta$-group'' representations, which arise from embedding $\g$ in some larger Lie algebra with a finite grading, and the generic stabilizer $G_v$ is a non-\'etale, non-infinitesimal finite group scheme.  (Premet's appendix in \cite{GG:spin} gives a detailed study of the half-spin representation of $D_8$ in characteristic 2.  For the other two representations in the left column, see Remark \ref{vinberg.equi}, \cite[4.8.2, 4.9.2]{Auld}, or \cite[4.1]{GurLawther}.) In the right column are two representations where the generic stabilizer is a nonzero infinitesimal group scheme, see Examples \ref{C.0100.3} and \ref{B.11}, and two that decompose as tensor products from Example \ref{A3.eg}.

\smallskip

The proof of Theorem \ref{MT} relies heavily on the results of part I, \cite{GG:large}, which include the case of type $A_1$ (ibid., Examples 1.8 and 10.3) and the case where $\dim V$ is large (ibid., Th.~A).  We also use Magma \cite{Magma} to verify that certain specific representations in specific characteristics are generically free, a process described in Section \ref{construct}; the key point is that it suffices to find any vector in the representations whose stabilizer is zero.  In Section \ref{converse.construct}, we prove a criterion by which Magma can verify that a representation is \emph{not} generically free for $\g$.  To handle representations where we do not specify the characteristic of the base field, we recall a means to transfer results from characteristic zero (Section \ref{transfer.sec}).  Three sections treating specific classes of representations (Sections \ref{squares.sec}--\ref{wedge.sec}) lead to the proof of Theorem \ref{MT} when $\car k$ is not special.  This proof occupies Sections \ref{restricted.sec}--\ref{final.sec}.  The first of these treats the case where the highest weight is restricted.  The second handles, roughly speaking, the case of a tensor decomposable representation.  The third and final section treats the few remaining cases, which are tensor decomposable as representations of the simply connected cover but not necessarily for $G$ itself.

\subsection*{Notation}
For convenience of exposition, we will assume in most of the rest of the paper that $k$ is algebraically closed of characteristic $p \ne 0$.  This is only for convenience, as our results for $p$ prime immediately imply the corresponding results for characteristic zero: simply lift the representation from characteristic 0 to $\Z$ and reduce modulo a sufficiently large prime.

Let $G$ be an affine group scheme of finite type over $k$.  If $G$ is additionally smooth, then we say that $G$ is an \emph{algebraic group}.  An algebraic group $G$ is \emph{simple} if it is not abelian, is connected, and has no connected normal subgroups $\ne 1, G$; for example $\SL_n$ is simple for every $n \ge 2$.

If $G$ acts on a variety $X$, the stabilizer $G_x$ of an element $x \in X(k)$ is a sub-group-scheme of $G$ with $R$-points
\[
G_x(R) = \{ g \in G(R) \mid gx = x \}
\]
for every $k$-algebra $R$.  A statement ``for generic $x$'' means that there is a dense open subset $U$ of $X$ such that the property holds for all $x \in U$.

If $\Lie(G) = 0$ then $G$ is finite and \'etale.  If additionally $G(k) = 1$, then $G$ is the trivial group scheme $\Spec k$.

We write $\g$ for $\Lie(G)$ and similarly $\spin_n$ for $\Lie(\Spin_n)$, etc.  We put $\z(\g)$ for the center of $\g$; it is the Lie algebra of the (scheme-theoretic) center of $G$.  As $\car k = p$, the Frobenius automorphism of $k$ induces a ``$p$-mapping'' $x \mapsto x^{[p]}$ on $\g$.
When $G$ is a sub-group-scheme of $\GL_n$ and $x \in \g$, the element $x^{[p]}$ is the $p$-th power of $x$ with respect to the typical, associative multiplication for $n$-by-$n$ matrices, see \cite[\S{II.7}, p.~274]{DG}.   An element $x \in \g$ is \emph{nilpotent} if $x^{[p]^n} = 0$ for some $n > 0$, \emph{toral} if $x^{[p]} = x$, and \emph{semisimple} if $x$ is contained in the Lie $p$-subalgebra of $\g$ generated by $x^{[p]}$, cf.~\cite[\S2.3]{StradeF}.

  \subsection*{Acknowledgements} We thank the  referees for their detailed and helpful comments on earlier versions of this paper.
Guralnick was partially supported by NSF grants DMS-1265297, DMS-1302886, and DMS-1600056.

\section{Dominant weights and faithful representations}

Let $G$ be a reductive group over the (algebraically closed) field $k$, and fix a pinning for $G$ which includes a maximal torus $T$, a Borel subgroup containing $T$, and generators for the root subalgebras of $\g$.
 
Irreducible representations of $G$, up to equivalence, are in one-to-one correspondence with dominant weights (relative to the fixed pinning), where the correspondence is given by sending a representation to its highest weight.  We write $L(\la)$ for the irreducible representation with highest weight $\la$, imitating the notation in \cite{Jantzen}.

We number dominant weights as in Table \ref{luebeck.table}, imitating \cite{luebeck} to make it convenient to refer to that paper.  We write $c_1 c_2 c_3 \cdots c_\ell$ as shorthand for the dominant weight $\sum c_i \omega_i$, where $\omega_i$ is the fundamental dominant weight corresponding to the vertex $i$ in Table \ref{luebeck.table}.  The weight $\sum c_i \omega_i$ is \emph{restricted} if $\car k = 0$, or if $\car k \ne 0$ and $0 \le c_i < \car k$ for all $i$.

\begin{table}[bth]
{\centering\noindent\makebox[450pt]{
\begin{tabular}[c]{p{2.2in}|p{3in}}

${(A_\ell)~~}$
\begin{picture}(7,2)(0,0)
\put(0,1){\circle*{3}}
\put(0,1){\line(1,0){20}}
\put(20,1){\circle*{3}}
\put(20,1){\line(1,0){20}}
\put(40,1){\circle*{3}}
\put(40,-1.6){ \mbox{$\cdots$}}
\put(62,1){\circle*{3}}
\put(62,1){\line(1,0){20}}
\put(82,1){\circle*{3}}
\put(82,1){\line(1,0){20}}
\put(102,1){\circle*{3}}

\put(-2,-7){\mbox{\tiny $1$}}
\put(18,-7){\mbox{\tiny $2$}}
\put(38,-7){\mbox{\tiny $3$}}
\put(54,-7){\mbox{\tiny $\ell$$-$$2$}}
\put(75,-7){\mbox{\tiny $\ell$$-$$1$}}
\put(100,-7){\mbox{\tiny $\ell$}}
\end{picture}
\vspace{0.5cm}

&

${(C_\ell)~~}$
\begin{picture}(7,2)(0,0)
\put(0,1){\circle*{3}}
\put(0,0){\line(1,0){20}}
\put(0,2){\line(1,0){20}}
\put(20,1){\circle*{3}}
\put(20,1){\line(1,0){20}}
\put(40,1){\circle*{3}}
\put(40,-1.6){ \mbox{$\cdots$}}
\put(62,1){\circle*{3}}
\put(62,1){\line(1,0){20}}
\put(82,1){\circle*{3}}
\put(82,1){\line(1,0){20}}
\put(9,-1){{\small\mbox{$>$}}}
\put(102,1){\circle*{3}}

\put(-2,-7){\mbox{\tiny $1$}}
\put(18,-7){\mbox{\tiny $2$}}
\put(38,-7){\mbox{\tiny $3$}}
\put(54,-7){\mbox{\tiny $\ell$$-$$2$}}
\put(75,-7){\mbox{\tiny $\ell$$-$$1$}}
\put(100,-7){\mbox{\tiny $\ell$}}
\end{picture}
\vspace{0.5cm}
\\

${(B_\ell)~~}$
\begin{picture}(7,2)(0,0)
\put(0,1){\circle*{3}}
\put(0,0){\line(1,0){20}}
\put(0,2){\line(1,0){20}}
\put(20,1){\circle*{3}}
\put(20,1){\line(1,0){20}}
\put(40,1){\circle*{3}}
\put(40,-1.6){ \mbox{$\cdots$}}
\put(62,1){\circle*{3}}
\put(62,1){\line(1,0){20}}
\put(82,1){\circle*{3}}
\put(82,1){\line(1,0){20}}
\put(9,-1){{\small\mbox{$<$}}}
\put(102,1){\circle*{3}}

\put(-2,-7){\mbox{\tiny $1$}}
\put(18,-7){\mbox{\tiny $2$}}
\put(38,-7){\mbox{\tiny $3$}}
\put(54,-7){\mbox{\tiny $\ell$$-$$2$}}
\put(75,-7){\mbox{\tiny $\ell$$-$$1$}}
\put(100,-7){\mbox{\tiny $\ell$}}
\end{picture}
\vspace{0.5cm}

&

${(D_\ell)~~}$
\begin{picture}(7,2)(0,0)
\put(102,1){\circle*{3}}
\put(82,1){\line(1,0){20}}
\put(20,1){\circle*{3}}
\put(20,1){\line(1,0){20}}
\put(40,1){\circle*{3}}
\put(40,-1.6){ \mbox{$\cdots$}}
\put(62,1){\circle*{3}}
\put(62,1){\line(1,0){20}}
\put(82,1){\circle*{3}}
\put(20,2){\line(-4,3){15}}
\put(20,0){\line(-4,-3){15}}
\put(5,12.9){\circle*{3}}
\put(5,-10.9){\circle*{3}}

\put(-2,-12){\mbox{\tiny $2$}}
\put(-2,11.8){\mbox{\tiny $1$}}
\put(18,-7){\mbox{\tiny $3$}}
\put(38,-7){\mbox{\tiny $4$}}
\put(54,-7){\mbox{\tiny $\ell$$-$$2$}}
\put(75,-7){\mbox{\tiny $\ell$$-$$1$}}
\put(100,-7){\mbox{\tiny $\ell$}}
\end{picture}

\end{tabular}
}}
\caption{Dynkin diagrams of simple root systems of classical type, with simple roots numbered as in \cite{luebeck}.} \label{luebeck.table}
\end{table}

The paper \cite{luebeck} studies $L(\la)$ when $\la$ is restricted.  When $\la$ is not restricted, we have the following statement: \emph{If $\la = \la_0 + p\la_1$ for $\la_0 \in T^*$ dominant and restricted, $\la_1 \in T^*$ dominant, and $p = \car k$, then $L(\la) \cong L(\la_0) \otimes L(\la_1)^{[p]}$}, see \cite[II.3.16]{Jantzen}.  Here $L(\la_1)^{[p]}$ denotes the Frobenius twist of the representation $L(\la_1)$ as in \cite[I.9.10]{Jantzen}; $\g$ acts trivially on it.

Slightly more delicate analysis is required to handle the case where $\la_0, \la_1$ are not assumed to belong to $T^*$, see Example \ref{A3.eg} for an illustration.

\begin{lem} \label{faith}
Let $G$ be a simple algebraic group over $k$ of characteristic $p \ne 0$.  Let $\la = \la_0 + p\la_1$ for $\la_0, \la_1$ dominant weights and $\la_0$ restricted.
\begin{enumerate}
\item \label{faith.0} If $G$ acts faithfully on $L(\la)$, then $\la_0 \ne 0$.
\item \label{faith.1} If $\la_0$ and $\la_1$ are both nonzero, then $\dim L(\la) > \dim G$.
\end{enumerate}
\end{lem}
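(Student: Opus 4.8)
My plan is to handle the two parts separately, using the decomposition $L(\la) \cong L(\la_0) \otimes L(\la_1)^{[p]}$ (from \cite[II.3.16]{Jantzen}) whenever $\la_0, \la_1 \in T^*$, and otherwise passing to the simply connected cover $\Gt$ of $G$, where $\la_0$ and $\la_1$ are automatically weights of $\Gt$. Write $\pi\colon \Gt \to G$ for the central isogeny; the representation $L(\la)$ of $G$ pulls back to the irreducible representation of $\Gt$ with the same highest weight, so as a $\Gt$-module it decomposes as $L(\la_0) \ot L(\la_1)^{[p]}$.

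For part \eqref{faith.0}: suppose $\la_0 = 0$. Then $L(\la) \cong L(\la_1)^{[p]}$ as $\Gt$-modules, and since $\g = \Lie(G) = \Lie(\Gt)$ acts on $L(\la)$ through its action on the $\Gt$-module, and $\g$ acts trivially on any Frobenius twist (as recalled in the excerpt), $\g$ acts trivially on $L(\la)$. A faithful action of $G$ on $V$ requires in particular that the homomorphism $\g \to \gl(V)$ be injective (since $\ker\rho$ being the trivial group scheme forces $\Lie(\ker\rho) = 0$, and $\Lie(\ker\rho)$ is the kernel of $\drho$). As $\g \ne 0$ for $G$ simple, this is a contradiction, so $\la_0 \ne 0$.

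For part \eqref{faith.1}: assume $\la_0, \la_1$ both nonzero. Work over the simply connected cover, so $L(\la) = L(\la_0) \ot L(\la_1)^{[p]}$ with both tensor factors of dimension $\ge 2$ (a nonzero dominant weight gives a nontrivial, hence $\ge 2$-dimensional, irreducible). Thus $\dim L(\la) = \dim L(\la_0) \cdot \dim L(\la_1) \ge 2\max\bigl(\dim L(\la_0), \dim L(\la_1)\bigr)$. It therefore suffices to show that one of the two factors, associated to a nonzero dominant weight, already has dimension $> \tfrac12 \dim G$; more precisely I will show $2\,\dim L(\mu) > \dim G$ for every nonzero dominant weight $\mu$. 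The minimal-dimension nontrivial irreducible representations of each simple type are classically known (the ``smallest'' representations): for the classical groups this is essentially the natural module or a spin/wedge module, and for exceptional types the minuscule or adjoint-adjacent module. Running through the list type by type — $A_\ell$: smallest is $\ell+1$ vs.\ $\dim = \ell^2+2\ell$; $B_\ell$: $2\ell+1$ vs.\ $2\ell^2+\ell$; $C_\ell$: $2\ell$ vs.\ $2\ell^2+\ell$; $D_\ell$: $2\ell$ vs.\ $2\ell^2-\ell$; and the five exceptional types by direct check — one verifies $2\dim L(\mu) > \dim G$ in every case, with the only near-equality cases ($A_1$, $A_2$) still satisfying the strict inequality. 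Hence $\dim L(\la) > \dim G$.

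The main obstacle is the bookkeeping in part \eqref{faith.1}: one must be careful that the relevant lower bound is on $\dim L(\mu)$ in \emph{arbitrary} characteristic $p$, not just characteristic zero, since $\dim L(\mu)$ can drop in small $p$. This is resolved by noting that the smallest-dimensional nontrivial irreducibles (natural, spin, wedge-square, and the exceptional minuscule/small modules) retain their dimension, or change only mildly (e.g.\ $\mathfrak{so}_{2\ell+1}$ in characteristic $2$, or the adjoint-type shrinkages), in ways that never threaten the factor-of-two slack; a uniform reference for the minimal faithful dimension of $L(\la)$ over any field, such as Lübeck's tables \cite{luebeck} together with the known list of smallest representations, covers all cases. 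Everything else is a short deduction from the Steinberg tensor product theorem and the definition of faithfulness.
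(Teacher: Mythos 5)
Part \eqref{faith.0} of your proposal is essentially the paper's argument, but you should not write $\Lie(G) = \Lie(\Gt)$: when the isogeny $\pi\colon \Gt \to G$ is not \'etale (the case $G = \SL_n/\mu_m$ with $p \mid m$, etc.), the differential $\dpi$ is neither injective nor surjective, so you cannot conclude that all of $\g$ acts trivially on $L(\la)$. What your argument actually yields --- and all that is needed --- is that the nonzero subspace $\dpi(\gt) \subseteq \g$ lies in $\ker\drho$, hence $\ker\drho \ne 0$ and the action is not faithful; this is exactly how the paper phrases it, so this part is repairable on the spot.

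Part \eqref{faith.1}, however, has a genuine gap. After bounding $\dim L(\la) \ge 2\max\bigl(\dim L(\la_0), \dim L(\la_1)\bigr)$ you reduce to the claim that $2\dim L(\mu) > \dim G$ for \emph{every} nonzero dominant weight $\mu$, and this claim is false for almost every type: already for $A_\ell$ your own numbers give $2(\ell+1)$ against $\dim G = \ell^2 + 2\ell$, and $2(\ell+1) < \ell^2+2\ell$ for all $\ell \ge 2$ (e.g.\ $6 < 8$ for $\SL_3$ on $k^3$); likewise $2\cdot 27 = 54 < 78$ for $E_6$, $2 \cdot 56 = 112 < 133$ for $E_7$, $2\cdot 4 = 8 < 10$ for $C_2$, and one gets at best equality for $G_2$ and $B_2$. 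So the ``factor-of-two slack'' you invoke does not exist, and the type-by-type check you describe cannot go through. The repair is to keep both tensor factors rather than discarding one: $\dim L(\la) = \dim L(\la_0)\cdot\dim L(\la_1) \ge m(G)^2$, where $m(G)$ is the minimal dimension of a nontrivial irreducible representation in the given characteristic (read off from L\"ubeck's tables, which also addresses your concern about dimensions dropping in small $p$), and then verify $m(G)^2 > \dim G$ type by type --- e.g.\ $(\ell+1)^2 = \ell^2 + 2\ell + 1 > \dim \SL_{\ell+1}$, $(2\ell)^2 > 2\ell^2 + \ell$, $27^2 > 78$, and so on. This quadratic-versus-linear comparison is exactly the paper's proof and is what makes the statement true.
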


\begin{proof}
We write $\Gt$ for the simply connected cover of $G$.

For \eqref{faith.0}, suppose $\la_0 = 0$.  Then $L(\la)$ is isomorphic to a Frobenius twist $L(\la_1)^{[p]}$ as a representation of $\Gt$, and the composition $\gt \to \g \to \gl(L(\la))$ is zero.  As $\gt \to \g$ is not itself the zero map, $\g$ does not act faithfully on $L(\la)$.

For \eqref{faith.1}, 
we may assume that $G$ is simply connected, so that $L(\la) \cong L(\la_0) \otimes L(\la_1)^{[p]}$ and $\dim L(\la) = \dim L(\la_0) \cdot \dim L(\la_1)$.  This, in turn, is at least $m(G)^2$ for $m(G)$ the dimension of the smallest nontrivial irreducible representation of $G$.  We list these values in Table \ref{mG.table}, obtained from \cite{luebeck}.  We note that in each case
\begin{equation} \label{mG.ineq}
m(G)^2 > \dim G,
\end{equation}
proving \eqref{faith.1}.
\end{proof}

\begin{table}[hbt]
\[
\begin{array}{c|ccccccccc}
\text{type of $G$}&A_\ell&B_\ell\, (\ell \ge 3)&C_\ell\, (\ell \ge 2)&D_\ell \, (\ell \ge 4)&E_6&E_7&E_8&F_4&G_2 \\ \hline
m(G)&\ell+1&2\ell+1-\eps&2\ell&2\ell&27&56&248&26-\eps&7-\eps \\ 
\dim G&\ell^2+2\ell&2\ell^2+\ell&2\ell^2+\ell&2\ell^2-\ell&78&133&248&56&14 
\end{array}
\]
\caption{The dimension $m(G)$ of the smallest nontrivial irreducible representation of $G$, assuming $\car k$ is not special for $G$.  The symbol $\eps$ represents 0 or 1 depending on $\car k$, and is 0 except possibly when $\car k \in \{ 2, 3 \}$.} \label{mG.table}
\end{table}

\section{Results from part I} \label{results}

For a representation $V$ of simple $G$ with $V^{[\g,\g]} = 0$ (such as in Theorem \ref{MT} in this paper), we showed in part I (\cite[Th.~A]{GG:large}): \emph{if $\dim V > b(G)$ for $b(G)$ as in Table \ref{classical.table} and $\car k$ is not special, then $\g$ acts virtually freely on $V$}.   Here, virtually free means that the stabilizer $\g_v$ for a generic $v \in V$ equals $\ker [ \g \to \gl(V) ]$, i.e., $\g_v$ is as small as possible.  It is the natural notion that generalizes ``generically free'' to allow for the case where the kernel is not zero.

\begin{table}[thb]
\begin{tabular}{cc|c||cc|r} 
type of $G$&$\car k$&$b(G)$&type of $G$&$\car k$&$b(G)$ \\ \hline
$A_\ell$&$\ne 2$& $2.25(\ell+1)^2$&$G_2$&$\ne 3$&48 \\
$A_\ell$&$= 2$&$2\ell^2 + 4\ell$&$F_4$&$\ne 2$&240 \\
$B_\ell$&$\ne 2$&$8\ell^2$&$E_6$&any&360 \\
$C_\ell$&$\ne 2$&$6\ell^2$&$E_7$&any&630\\
$D_\ell$&$\ne 2$&$2(2\ell-1)^2$&$E_8$&any&1200 \\
$D_\ell$&$=2$&$4\ell^2$
\end{tabular}
\caption{Bound $b(G)$ from part I} \label{classical.table}
\end{table}

Recall that $\lie := \Lie(G)$.  For $x \in \lie$, put
\[
V^x := \{ v \in V \mid \drho(x)v = 0 \}
\]
and $x^G$ for the $G$-conjugacy class $\Ad(G)x$ of $x$.
We are going to verify the inequality
\begin{equation} \label{ineq.mother}
  \dim x^G + \dim V^x < \dim V
\end{equation}
for various $x \in \g$.
The following lemma is Lemma 2.6 in \cite{GG:spin}; it resembles \cite[Lemma 4]{AndreevPopov} and \cite[\S3.3]{Guerreiro}.
\begin{lem} \label{ineq}
Suppose $G$ is semisimple over an algebraically closed field $k$ of characteristic $p > 0$, and  let $\lsub$ be a $G$-invariant subspace of $\lie$.  
\begin{enumerate}
\item \label{mother.lie2} If inequality \eqref{ineq.mother} holds for every toral or nilpotent $x \in \g \setminus \lsub$, then  $\g_v \subseteq \lsub$ for generic $v \in V$.

\item \label{mother.lie} If $\lsub$ consists of semisimple elements and \eqref{ineq.mother} holds for every $x \in \g \setminus \lsub$ with $x^{[p]} \in \{ 0, x \}$, then $\g_v \subseteq \lsub$  for generic $v$ in $V$. $\hfill\qed$
\end{enumerate}
\end{lem}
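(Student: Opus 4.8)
The plan is to show that the ``bad locus''
\[
W := \{\, v \in V : \g_v \not\subseteq \lsub \,\}
\]
is not dense in $V$. It is $G$-stable, since $\g_{gv} = \Ad(g)\,\g_v$ and $\lsub$ is $G$-invariant; so once we establish $\dim W < \dim V$, the conclusion $\g_v \subseteq \lsub$ holds on the dense open complement of its closure.

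\textbf{Step 1: reduce an arbitrary $v \in W$ to a ``special'' element of $\g_v$.} First I would note that $\g_v = \{x \in \g : \drho(x)v = 0\}$ is a restricted Lie subalgebra of $\g$: since $\drho$ is a homomorphism of restricted Lie algebras one has $\drho(x^{[p]}) = \drho(x)^p$, so $\drho(x)v = 0$ forces $\drho(x^{[p]})v = 0$. Given $x \in \g_v \setminus \lsub$, take its Jordan decomposition $x = x_s + x_n$ inside the abelian restricted subalgebra generated by $x$; then $x_s$ (semisimple) and $x_n$ (nilpotent) both lie in $\g_v$, and since $\lsub$ is a $k$-subspace not containing $x$, at least one of $x_s, x_n$ lies outside $\lsub$. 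If $x_n \notin \lsub$, it is a nilpotent element of $\g_v \setminus \lsub$; if $x_s \notin \lsub$, then the torus $\tor \subseteq \g_v$ generated by $x_s$ is not contained in $\lsub$, and because $\tor$ is spanned over $k$ by its toral elements while $\tor \cap \lsub$ is a proper subspace, some toral $t \in \tor$ lies outside $\lsub$. This settles part \eqref{mother.lie2}. For part \eqref{mother.lie}, the hypothesis that $\lsub$ consists of semisimple elements is exactly what lets us work with the stronger constraint $x^{[p]} \in \{0,x\}$: a nonzero element cannot be both nilpotent and semisimple (a torus has no nonzero nilpotents), so when $x_n \ne 0$ it automatically avoids $\lsub$, and replacing it by a suitable iterated $p$-power $x_n^{[p]^m} \in \g_v$ yields a nonzero $y$ with $y^{[p]} = 0$ and $y \notin \lsub$; when $x_n = 0$ we have $x = x_s$ and we are in the toral case. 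Either way,
\[
W \subseteq \bigcup_{x} V^x ,
\]
the union taken over the $x \notin \lsub$ permitted by the relevant hypothesis.

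\textbf{Step 2: organize the union by conjugacy classes and count dimensions.} The special elements occurring in Step 1 lie in finitely many $G$-orbits: there are finitely many nilpotent orbits, and the toral elements of $\g$ (being, after a faithful embedding $G \hookrightarrow \GL_n$, matrices with $x^p = x$) fall into finitely many conjugacy classes. Since $\lsub$ is $G$-stable, each such orbit is either contained in $\lsub$ or disjoint from it; discard the former. For an orbit $O = x^G$ disjoint from $\lsub$, one has $\bigcup_{y \in O} V^y = \bigcup_{g \in G} \rho(g) V^x$, which is the image in $V$ of the incidence variety $\{(y,v) \in O \times V : \drho(y)v = 0\}$; this variety maps onto the irreducible $O$ with all fibers isomorphic to $V^x$, hence has dimension $\dim x^G + \dim V^x$, which is $< \dim V$ by \eqref{ineq.mother}. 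So each $\bigcup_{y \in O} V^y$ has dimension $< \dim V$, and the finite union over the relevant orbits gives $\dim W < \dim V$, completing the proof.

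I expect the delicate point to be Step 1 — extracting from the restricted-Lie-algebra Jordan decomposition and the structure theory of tori in characteristic $p$ precisely the statement that $\g_v$ must contain a toral or nilpotent (resp.\ $x^{[p]}\in\{0,x\}$) element lying outside $\lsub$, and in part \eqref{mother.lie} using the purely-semisimple nature of $\lsub$ to prevent a nonzero nilpotent part from being absorbed. The dimension estimate in Step 2 is routine once one grants finiteness of the relevant orbits (classical for nilpotent orbits; for toral elements it reduces to conjugacy of maximal toral subalgebras).
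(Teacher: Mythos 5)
Your argument is correct, and it is essentially the proof the paper relies on: the paper does not reprove Lemma \ref{ineq} but quotes it as Lemma 2.6 of \cite{GG:spin}, whose proof proceeds exactly as you do — $\g_v$ is a restricted subalgebra, the Jordan decomposition (plus, for part \eqref{mother.lie}, iterated $[p]$-powers and the fact that a subspace of semisimple elements contains no nonzero nilpotents, and the spanning of a torus by its toral elements) produces a toral or nilpotent element of $\g_v$ outside $\lsub$, and then the finiteness of nilpotent and toral $G$-classes together with the incidence-variety count $\dim x^G + \dim V^x < \dim V$ shows the bad locus is not dense. So no gaps beyond the standard facts you already flagged (conjugacy of maximal tori/toral elements into $\Lie(T)$ and finiteness of nilpotent orbits), which are also what the cited proof invokes.
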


Taking $\lsub = \z(\g)$ in Lemma \ref{ineq}, we see that verifying \eqref{ineq.mother} for nonzero nilpotent and noncentral toral $x \in \g$ implies that $\g_v \subseteq \z(\g)$ for generic $v \in V$.   This in turn implies that the action is virtually free since $Z(G)$ is a diagonalizable group scheme for the $G$'s we consider here (so $\g_v \cap \z(\g) = \z(\g)_v = \ker \drho$).

Theorem 12.2 in \cite{GG:large} proved a somewhat stronger result than the one stated at the start of this section: \emph{If $V$ is a representation of a simple group $G$ such that $V^{[\g, \g]} = 0$ and $\dim V > b(G)$, then \eqref{ineq.mother} holds for all noncentral $x \in \g$ with $x^{[p]} \in \{ 0, x \}$.} 

\section{Constructing representations in Magma} \label{construct}

In order to prove Theorem \ref{MT}, the results of \cite{GG:large} reduce us to considering a finite list of irreducible representations, each of which we will consider.  Some of these will be dealt with by invoking calculations done on a personal computer using Magma \cite{Magma}, which we now explain.  (Code and output are attached to the arxiv version of this article.)  

The Magma instructions 
\begin{align*}
&\texttt{R := IrreducibleRootDatum(}T, \ell\texttt{);}\\
&\g \texttt{ := LieAlgebra(R, GF(}q\texttt{));}
\end{align*}
create a Lie algebra $\g$ of the split reductive group over the finite field $\F_q$ with root datum \texttt{R} of type $T_\ell$ (by default simply connected).  For a given highest weight $\la$, 
\begin{align*}
&\texttt{HighestWeightRepresentation(}\g, \la\texttt{);}
\end{align*}
 gives a homomorphism $\rho$ from $\g$ to matrices, and one can identify the space of row vectors $v$ where the action by $\g$ is $v \mapsto v\rho(x)$ for $x \in \g$ with the representation $H^0(\la)$ of $G$ in the notation of \cite{Jantzen}; it is induced from the 1-dimensional representation $\la$ of the Borel subgroup.
 The vector $(1, 0, \ldots, 0)$, the first basis vector in the Magma ordering, is a highest weight vector and it generates a submodule $V$ that is irreducible with highest weight $\la$.  (Untrusting readers can verify that the submodule generated by this vector has the same dimension as the irreducible representation with the same highest weight as recorded in the literature, and therefore the submodule is the desired irreducible representation,)
 
For any row vector $v$, it is then a matter of linear algebra to compute the stabilizer $\g_v$, i.e., the subspace of $x \in \g$ such that $v\rho(x) = 0$.  It is determined by \texttt{Kernel(VerticalJoin([}$v\rho(y)$ \texttt{:} $y$\texttt{ in Basis($\g$)]))}.

To verify that a particular $V$ is virtually free, we use \texttt{Random(V)} to generate random vectors $v \in V$.  For each, we compute $\dim \g_v$.  By upper semicontinuity of dimension, $\dim \g_v$ is at least as big as $\dim \g_w$ for $w$ generic in $V$.  Therefore, if we find any $v \in V$ with $\dim \g_v = \dim \ker \drho$, we have verified that the representation is virtually free.

\begin{rmk*}
Suppose $q \!: \Gt \to G$ is a central isogeny; note that the differential $\dq \!: \gt \to \g$ need not be surjective, i.e., $\ker q$ need not be \'etale.  Nonetheless, if $\g$ acts virtually freely on $V$, then so does $\gt$.  Therefore, in the computer calculations described above we work with $\g$, the Lie algebra of the group $G$ that acts faithfully on $V$.  In Magma, this can be done by invoking the optional argument \texttt{Isogeny} for \texttt{IrreducibleRootDatum}.

(If we instead assume that $\gt$ acts virtually freely on $V$, it may occur that $\g$ does not.  For example, that is the case when $\car k = 2$ and (a) $\Gt = \SL_4$, $G = \PGL_4$, and $V$ has highest weight $\omega_2 + 2\omega_3$ as in Example \ref{A3.eg} or (b) $\Gt = \Sp_8$, $G = \PSp_8$,  and $V$ is the 16-dimensional irreducible ``spin'' representation as in \cite[\S8]{GG:special}.)
\end{rmk*}

\section{Examples where $\g$ does not act virtually freely}  \label{converse.construct}

\begin{lem} \label{nvfree}
Let $V$ be a representation of a reductive algebraic group $G$, and suppose that Cartan subalgebras in $\g$ are maximal toral subalgebras\footnote{This condition is equivalent to condition (2) in Lemma \ref{regular.lem} by \cite[XIII.6.1d]{SGA3.2}.}.
 If there is a $v \in V$ such that 
\begin{enumerate}
	\item $\lsub := \g_v$ is a toral subalgebra;
	\item $\dim \z_\g(\lsub) = \rank G$; and
	\item \label{nvfree.3} $\dim G - \rank G = \dim V - \dim V^\lsub$,
\end{enumerate}
then there is an open subset $U$ of $V$ containing $v$ such that $\g_u$ is a $G$-conjugate of $\lsub$ for every $u \in U$ and there is a maximal torus $T$ such that $G_u$ is $G$-conjugate to a closed sub-group-scheme of $N_G(T)$ for every $u \in U$.
\end{lem}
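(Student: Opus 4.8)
The plan is to realize the required open set $U$ as a neighbourhood of $v$ lying inside the image of the ``collapsing'' morphism $\phi\colon G\times V^\lsub\to V$, $(g,w)\mapsto gw$, and then to deduce the statement about $G_u$ from the structure of the normalizer $N_G(\lsub)$ of the subalgebra $\lsub$ in $G$.

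First I would fix the torus. Since $\lsub$ is toral, the standing hypothesis lets us choose a maximal torus $T$ with $\lsub\subseteq\tor:=\Lie T$. As $\tor$ is abelian and contains $\lsub$ we get $\tor\subseteq\z_\g(\lsub)$, and hypothesis (2) forces $\z_\g(\lsub)=\tor$ since both have dimension $\rank G$. Using the root-space decomposition $\g=\tor\oplus\bigoplus_\alpha\g_\alpha$ for $T$, I would then check $\n_\g(\lsub)=\tor$: write $x\in\n_\g(\lsub)$ as $x=t_0+\sum_\alpha x_\alpha$ with $t_0\in\tor$ and $x_\alpha\in\g_\alpha$; for every $y\in\lsub$ the bracket $[y,x]=\sum_\alpha\mathrm{d}\alpha(y)\,x_\alpha$ must lie in $\lsub\subseteq\tor$, and since distinct $T$-weight spaces intersect trivially this forces $x_\alpha\neq0\Rightarrow\mathrm{d}\alpha|_\lsub=0\Rightarrow\g_\alpha\subseteq\z_\g(\lsub)=\tor$, which is absurd; hence $x=t_0\in\tor$.

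The heart of the argument is the differential of $\phi$ at $(1,v)$. Its image is $\g v+V^\lsub$, where $\g v$ denotes the image of $\g$ under $x\mapsto xv$. I would identify the intersection: for $x\in\g$, the vector $xv$ is killed by $\lsub$ iff $[y,x]v=0$ for all $y\in\lsub$ (using $yv=0$), iff $[\lsub,x]\subseteq\g_v=\lsub$; hence $\g v\cap V^\lsub=\n_\g(\lsub)v=\tor v$, which has dimension $\rank G-\dim\lsub$ because $\tor\cap\g_v=\lsub$. Therefore
\[
\dim(\g v+V^\lsub)=(\dim G-\dim\lsub)+\dim V^\lsub-(\rank G-\dim\lsub)=\dim G-\rank G+\dim V^\lsub,
\]
which equals $\dim V$ by hypothesis (3). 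So $\mathrm{d}\phi_{(1,v)}$ is surjective; as $G\times V^\lsub$ and $V$ are smooth, $\phi$ is smooth, hence open, near $(1,v)$, and its image contains an open neighbourhood $U_0$ of $v$. Let $U_1=\{u\in V:\dim\g_u\le\dim\lsub\}$, which is open by upper semicontinuity of $u\mapsto\dim\g_u$ and contains $v$; put $U=U_0\cap U_1$.

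For $u\in U$ write $u=gw$ with $w\in V^\lsub$. Then $\Ad(g)\lsub\subseteq\g_{gw}=\g_u$ while $\dim\g_u\le\dim\lsub=\dim\Ad(g)\lsub$, so $\g_u=\Ad(g)\lsub$ is a $G$-conjugate of $\lsub$; this is the first assertion. It also follows that $\g_w=\Ad(g^{-1})\g_u=\lsub$, so $G_w$ stabilizes $\g_w=\lsub$, i.e.\ $G_w\subseteq N_G(\lsub)$. Finally I would note that $\Lie N_G(\lsub)=\n_\g(\lsub)=\tor$, so $\dim N_G(\lsub)\le\rank G$, while $T\subseteq Z_G(\lsub)\subseteq N_G(\lsub)$ (as $T$ acts trivially on $\Lie T\supseteq\lsub$) gives the reverse inequality; hence $N_G(\lsub)$ is smooth with identity component $T$, and so normalizes $T$, i.e.\ $N_G(\lsub)\subseteq N_G(T)$. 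Consequently $G_u=g\,G_w\,g^{-1}$ is $G$-conjugate (via $g^{-1}$) to the closed subgroup $G_w$ of $N_G(T)$, for the same $T$ in every case. I expect the main obstacle to be the differential computation — pinning down $\g v\cap V^\lsub$ exactly as $\tor v$ so that hypotheses (2) and (3) combine to give precisely $\dim V$; once $\mathrm{d}\phi_{(1,v)}$ is surjective the rest is essentially formal.
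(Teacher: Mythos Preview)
Your proof is correct and shares the paper's overall strategy --- analyze the map $\phi\colon G\times V^\lsub\to V$ together with the normalizer $N_G(\lsub)$ --- but executes the two key steps by different means. For the dominance of $\phi$, the paper identifies the generic fiber over points of $V^\lsub$ with $N_G(\lsub)$ (via a transporter argument on the open set $\hat U\subseteq V^\lsub$ where $\g_w=\lsub$) and then applies the fiber-dimension formula to get $\dim\im\phi=\dim V$; you instead show directly that $d\phi_{(1,v)}$ is surjective, obtaining openness at $v$ without introducing $\hat U$. For the containment $N_G(\lsub)\subseteq N_G(T)$, the paper notes that anything normalizing $\lsub$ also normalizes $\z_\g(\lsub)=\tor$ and then invokes $N_G(\tor)=N_G(T)$ from SGA3 (this is where the Cartan-subalgebra hypothesis enters explicitly); you instead compute $\Lie N_G(\lsub)=\n_\g(\lsub)=\tor$ via the root decomposition, combine with $T\subseteq N_G(\lsub)$ to deduce smoothness and $N_G(\lsub)^\circ=T$, and conclude. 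The paper's centralizer trick is slicker and avoids your root-space bookkeeping, while your differential computation is more direct than the paper's fiber-dimension count and pins down the neighbourhood of $v$ immediately.
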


\begin{proof} 
Because $G$ is reductive and $\lsub$ is toral, there exists a maximal torus $T$ in $G$ whose Lie algebra $\tor$ contains $\lsub$, see \cite[Th.~13.3, Rmk.~13.4]{Hum:p}.  Since $\z_\g(\lsub)$ contains $\tor$, the two are equal.   In particular, $T$ normalizes $V^\lsub$.  Moreover, any element of $G$ that normalizes $\lsub$ also normalizes $\z(\lsub) = \tor$, so $N_G(\lsub) \subseteq N_G(\tor) = N_G(T)$ (where the latter equality is by the hypothesis on $\g$ \cite[XIII.6.1b]{SGA3.2}) and $N_G(\lsub)^\circ = T$.

Put $\hat{U}$ for the set of $v' \in V^\lsub$ such that $\dim \g_{v'}$ is minimal; it is open in $V^\lsub$.  On the one hand, $\lsub \subseteq \g_{v'}$, and on the other hand, $v \in V^\lsub$, so $\dim \g_{v'} \le \dim \lsub$, whence $\g_{v'} = \lsub$ for all $v' \in \hat{U}$ and $v$ is in $\hat{U}$.  

We claim that, for generic $w \in V^\lsub$, the transporter $\Tran_G(w, V^\lsub)$, which has $R$-points
\[
\Tran_G(w, V^\lsub)(R) = \{ g \in G(R) \mid gw \in V^\lsub \ot R \}
\]
for every $k$-algebra $R$, equals $N_G(\lsub)$.  The direction $\supseteq$ is clear because $N_G(\lsub)$ normalizes $V^\lsub$.  Conversely, suppose $gw \in V^\lsub \ot R$.  Then $gw \in \hat{U}(R)$ by the definition of $\hat{U}$, so $\g_{gw} = \lsub$ and $\subseteq$ is verified.

Define $\psi \!: G \times V^\lsub \to V$ by $\psi(g,w) = gw$.  By the preceding paragraph, for generic $w \in V^\lsub$, $\psi^{-1}(w) = \{ (g, g^{-1}w) \mid g \in N_G(\lsub) \}$.  That is,
\[
\dim \im \psi = \dim G + \dim V^\lsub - \dim N_G(\lsub),
\]
which is $\dim V$ by \eqref{nvfree.3}.
Thus $\psi$ is dominant and there is an open subset $U$ of $V$ consisting of elements whose stabilizer in $\g$ is conjugate to $\lsub$.
\end{proof}

In the language of \cite[\S2.8]{PoV}, the proof shows that $V^\lsub$ is a ``relative section'' for the action of $G$ on $V$.

The hypotheses of Lemma \ref{nvfree} are easy to verify with a computer.  For example, to check that $\g_v$ is toral, one checks that it is abelian (Magma's \texttt{IsAbelian}) and that a basis consists of semisimple elements (by checking, for each basis vector $x$, that $x$ belongs to the subspace spanned by $x^{[p]^i}$ for $i \ge 1$).

\begin{eg}[$C_4$, 0100, $p = 3$] \label{C.0100.3}
Consider now $G = \Sp_8$ over a field $k$ of characteristic 3.  (See Prop.~\ref{C.wedge3}\eqref{C.sp8} for the case $\car k \ne 2, 3$.)  It has a unique irreducible representation $V$ with $\dim V = 40$ \cite{luebeck}, which occurs as a quotient of the Weyl module of dimension 48 contained in $\wedge^3(k^8)$ (with $k^8$ as the other composition factor), compare \cite{PremetSup} or Proposition \ref{C.wedge3}.  Using Magma, one can construct $V$ (say, with $k = \F_3$) as in the preceding section and verify that for a random $v \in V$, in the notation of Lemma \ref{nvfree}, $\dim \lsub = 2$ and $\dim V^\lsub = 8$.  It follows that $\g$ does not act virtually freely on $V$.  On the other hand, $G_v(k) = 1$ for generic $v \in V$ by \cite{GurLawther}, so this is an example of a representation where the scheme-theoretic generic stabilizer $G_v$ is a nontrivial and infinitesimal group scheme.
\end{eg}

Lemma \ref{nvfree} shows also that the second representation in the right column of Table \ref{irred.nvfree} is not virtually free, see Example \ref{B.11}.

\section{Representations defined over a localization of the integers} \label{transfer.sec}

Recall that $G$ is defined over an algebraically closed field $k$ of characteristic $p$, and in particular is split.  Let now $R$ be a subring of $\Q$ with homomophisms to $\F_p$ and to a field $K$ containing a primitive $p$-th root of unity $\zeta$ (e.g., take $R = \Z$ and $K = \C$).  There exists a smooth affine group scheme $G_R$ over $R$ which is split and such that $G_R \times k$ is isomorphic to $G$.  

\begin{lem} \label{auld.popov}
Let $\rho \!: G_R \to \GL(V)$ be a homomorphism of group schemes over $R$ for some free $R$-module $V$.  Then the following are equivalent:
\begin{enumerate}
	\item \label{auld.popov.x} $\dim x^G + \dim (V_k)^x < \dim V$ for all noncentral $x \in \g$ such that $x^{[p]} = x$.
	\item \label{auld.popov.g} $\dim g^{G_K} + \dim (V_K)^g < \dim V$ for all noncentral $g \in G_R(K)$ such that $g^p = 1$.
\end{enumerate}
\end{lem}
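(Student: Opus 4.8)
The plan is to show that both conditions are governed by the same data: homomorphisms of $R$-group schemes $\phi\colon\mu_{p,R}\to G_R$. Recall that $\mu_{p,R}$ is diagonalizable over $R$ with character group $\Z/p$; its base change $\mu_{p,k}$ is infinitesimal of height one, with $\Lie(\mu_{p,k})$ spanned by a canonical toral element, while $\mu_{p,K}$ is the constant group $\Z/p$ generated by the chosen root of unity $\zeta$. To each $\phi$ I attach $x_\phi:=\mathrm{d}\phi_k(\text{canonical toral generator})\in\g$, which satisfies $x_\phi^{[p]}=x_\phi$, and $g_\phi:=\phi_K(\zeta)\in G_R(K)$, which satisfies $g_\phi^p=1$. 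First I would check the converse: every toral $x\in\g$ with $x^{[p]}=x$ is $\Ad(G)$-conjugate to some $x_\phi$, and every $g\in G_R(K)$ with $g^p=1$ is $G_R(\overline K)$-conjugate to some $g_\phi$. For the first claim, $x$ lies in $\Lie(T)$ for a maximal torus $T$, which we may take to be $T_R\times k$ for the split maximal torus $T_R$ of $G_R$; writing $T_R\cong\Gm^{\,r}$, the elements $y\in\Lie(T_R\times k)=k^r$ with $y^{[p]}=y$ are exactly the images of the cocharacters $X_*(T_R)=\Z^r$ under reduction modulo $p$, so $x$ comes, via $\mu_p\subset\Gm$, from a $\phi$ defined over $R$. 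For the second claim, after conjugating $g$ into $T_R(\overline K)$ each coordinate of $g$ in $\Gm^{\,r}(\overline K)$ is a power of $\zeta$, so $g$ is the value at $\zeta$ of a cocharacter of $T_R$; restricting that cocharacter to $\mu_{p,R}$ gives the required $\phi$.

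The heart of the argument is that, for a fixed $\phi$ with $x=x_\phi$ and $g=g_\phi$, one has $\dim x^G=\dim g^{G_K}$ and $\dim(V_k)^x=\dim(V_K)^g$; then the equivalence of \eqref{auld.popov.x} and \eqref{auld.popov.g} follows, since the two quantities and the two hypotheses are insensitive to conjugacy and to extension of $K$. For the orbit dimensions: as $\mu_p$ has height one and $(\Z/p)^\times=\Aut(\mu_p)$ injects into $\Aut(\Lie\mu_p)$ over any base of characteristic $p$, an element of $G_k$ centralizes the subgroup scheme $\phi_k(\mu_{p,k})$ exactly when it fixes $x$ under $\Ad$, so $\dim x^G=\dim G-\dim Z_{G_k}(\phi_k(\mu_{p,k}))$; meanwhile $\phi_K(\mu_{p,K})$ is the cyclic group $\langle g\rangle$, so $\dim g^{G_K}=\dim G-\dim Z_{G_K}(\phi_K(\mu_{p,K}))$. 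Since $\phi(\mu_{p,R})$ is a subgroup scheme of multiplicative type of the smooth affine group scheme $G_R$, its centralizer $Z_{G_R}(\phi(\mu_{p,R}))$ is smooth over $R$ by \cite[Exp.~XI]{SGA3.2}, hence all of its fibres over the irreducible scheme $\Spec R$ have the same dimension, which gives the first equality. For the fixed spaces: diagonalizability of $\mu_{p,R}$ gives a $\Z/p$-grading $V=\bigoplus_{i\in\Z/p}V_i$ by $R$-submodules, each a direct summand, with $\mu_{p,R}$ acting on $V_i$ through the character $i$; then $x$ acts on $(V_i)_k$ by the scalar $i\in\F_p\subseteq k$ and $g$ acts on $(V_i)_K$ by $\zeta^i$, so $(V_k)^x=(V_0)_k$ and $(V_K)^g=(V_0)_K$, both of dimension $\rank_R V_0$. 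Finally one matches the word ``noncentral'': if $x\in\z(\g)$ then $\dim x^G=0$ (as $\z_\g(x)=\g$), so $\Ad(G)$ fixes $x$ and hence $\phi_k(\mu_{p,k})\subseteq Z(G_k)$; conversely $g\in Z(G_K)$ gives $\phi_K(\mu_{p,K})\subseteq Z(G_K)$; and in either case, a homomorphism from the connected group scheme $G_R$ to the constant group scheme $\underline{\Aut}(\mu_{p,R})$ is constant, so the conjugation action of $G_R$ on $\phi(\mu_{p,R})$ is trivial, whence $\phi(\mu_{p,R})$ is central over $R$ and the opposite fibre is central too (using $\Lie(Z(G))\subseteq\z(\g)$ on the $k$-side).

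I expect the main obstacle to be the scheme-theoretic bookkeeping over the base $R$, which is not a field: one must invoke the correct smoothness statement for centralizers of subgroups of multiplicative type, so that the relative (=\ fibrewise) dimension is constant over $\Spec R$ and hence agrees at the special and generic points, and one must carefully verify that centralizing the infinitesimal subgroup $\phi_k(\mu_{p,k})$ really is the same as fixing the toral element $x$ under $\Ad$ — this rests on $\mu_p$ having height one and being reconstructible from its restricted Lie algebra. The remaining ingredients — the dictionary among $\mu_p$, toral elements of $\g$, and order-$p$ elements of $G(K)$, together with the invariance of the relevant dimensions under conjugation and field extension — are routine.
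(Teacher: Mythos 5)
Most of your argument is a legitimate variant of the paper's proof and rests on the same dictionary: your assignment $\phi\mapsto(x_\phi,g_\phi)$ for homomorphisms $\mu_{p,R}\to T_R\subseteq G_R$ is exactly the paper's bijection $\sum c_jh_j\mapsto\prod\tau_j(\zeta^{c_j})$ on the level of the split torus. Where the paper compares $\dim (V_k)^x$ with $\dim (V_K)^{\psi(x)}$ via the $T_R$-weight decomposition of $V$ and compares centralizer dimensions by counting roots with $\mathrm{d}\alpha(x)=0$ versus $\alpha(\psi(x))=1$, you obtain the same two equalities from the $\Z/p$-grading of $V$ under $\mu_{p,R}$ and from smoothness of the centralizer of a multiplicative-type subgroup (SGA3, Exp.~XI) together with constancy of fibre dimension over $\Spec R$; both of these steps are correct, granted small points you gloss over (the image of $\phi$ is of multiplicative type because $\phi$ factors through $T_R$, and constancy of fibre dimension needs flatness plus the identity section to connect the two fibres). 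This buys a proof that does not mention roots at all, at the cost of citing heavier machinery than the paper's elementary character computation.

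There is, however, a genuine gap in your matching of ``noncentral.'' The sentence asserting that ``a homomorphism from the connected group scheme $G_R$ to the constant automorphism group scheme of $\mu_{p,R}$ is constant, so the conjugation action of $G_R$ on $\phi(\mu_{p,R})$ is trivial'' presupposes that $G_R$ normalizes $\phi(\mu_{p,R})$ --- without that there is no such homomorphism --- and normalization over $R$ is essentially what you are trying to establish; worse, as written the argument never uses the hypothesis that one fibre is central, so it would prove that \emph{every} $\phi(\mu_{p,R})$ is central in $G_R$, which is false (take $\phi$ the restriction to $\mu_p$ of a cocharacter not landing in the center). The step is repairable with tools you already set up: $H_k:=\phi_k(\mu_{p,k})$ is central in $G_k$ if and only if $Z_{G_k}(H_k)$ has dimension $\dim G$, which by your smooth-centralizer, constant-fibre-dimension argument holds if and only if $Z_{G_K}(H_K)$ has dimension $\dim G$; since that centralizer is smooth and closed and $G_K$ is connected, this is equivalent to $H_K\subseteq Z(G_K)$, i.e., to $g_\phi$ being central (on the $k$-side pass between $x_\phi$ and $H_k$ using $\Lie(Z(G_k))=\z(\g)$, which is the paper's definition of $\z(\g)$). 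Alternatively, since $\phi$ factors through $T_R$, centrality of either fibre is the vanishing of every root restricted to $\phi(\mu_{p,R})$, an element of the character group $\Z/p$ that does not depend on the fibre --- which is precisely the character comparison carried out in the paper's proof.
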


Here and below we use the shorthand $X_F$ for $X_R \times F$, where $X_R$ is an $R$-scheme and there is an implicit homomorphism $R \to F$.

\begin{proof}
This is essentially \S3.4 in \cite{Auld}, which we reproduce here for the convenience of the reader.  Pick a split maximal torus $T_R$ in $G_R$ and a basis $\tau_1, \ldots, \tau_\ell$ of the lattice of cocharacters $\Gm \to T_R$.  Identifying the Lie algebra of $\Gm$ with $k$, the elements $h_j := \dtau_j(1)$ make up a basis of the Lie algebra $\tor$ of $T_R \times \F_p$ such that $h_j^{[p]} = h_j$.  This gives a bijection of toral elements in $\tor$ with elements of order $p$ in $T_K$ via
\[
\psi \!: \sum c_j h_j \mapsto \prod \tau_j(\zeta^{c_j}) \quad \text{for $c_j \in \F_p$.}
\]

There is a basis $\chi_1, \ldots, \chi_\ell$ of the lattice of characters $T_R \to \Gm$ such that $\chi_i \circ \tau_j \!: \Gm \to \Gm$ is the identity for $i = j$ and trivial for $i \ne j$, hence $\dchi_i(h_j) = \delta_{ij}$ for all $i, j$.  Writing a character $\chi$ as $\sum d_i \chi_i$ for $d_i \in \Z$, we find 
\[
\chi(\psi(\textstyle\sum c_i h_i)) = \prod_i \zeta^{c_i d_i} = \zeta^{\sum c_i d_i} = \zeta^{\dchi(\sum c_i h_i)}
\]
for $c_i \in \F_p$.  That is, for toral $x \in \tor$, $\dchi(x) = 0$ in $\F_p$ if and only if $\chi(\psi(x)) = 1$.  Decomposing $V$ as a sum of weight spaces relative to $T_R$ (using that $R$ is an integral domain), we find that $\dim (V_k)^x = \dim (V_K)^{\psi(x)}$.

The centralizer in $\g$ of $x$ and the centralizer in $G_K$ of $\psi(x)$ contain $\Lie(T_k)$ and $T_K$, so their identity components are generated by that and the root subalgebras or subgroups corresponding to roots vanishing on $x$ or $\psi(x)$ respectively.  As in the preceding paragraph, we find that
the centralizers of $x$ and $\psi(x)$ have the same dimension, hence (a) $x$ is central in $\g$ if and only if $\psi(x)$ is central in $G_K$ and (b) $\dim x^G = \dim \psi(x)^{G_K}$.  The equivalence of \eqref{auld.popov.x} and \eqref{auld.popov.g} follows.  
\end{proof}

We now consider five examples and show, in most cases, that inequality  \eqref{ineq.mother} holds.  We use Lemma \ref{auld.popov} to handle the
elements with $x^{[p]}=x$.   In the cases where the characteristic $p$ module is the reduction of a characteristic $0$ module, it suffices
to prove the inequality for elements of order $p$ in the group over $\mathbb{C}$.   In all the examples below, this has been confirmed in
\cite[2.5.10, 2.5.17, 2.5.18, 2.5.24, 2.6.10]{GurLawther}.   It is also straightforward to use Magma to compute this in all the examples below as the modules  have small dimension.   One
can also use closure arguments to reduce to the case of nilpotent elements.  
Thus, it suffices to consider elements $x$ with $x^{[p]}=0$. 

\begin{eg}[$B_2$, 11] \label{B.11}
Let $G = \Spin_5 \cong \Sp_4$ and take $V$ to be the irreducible representation of dimension 12 (if $\car k = 5$) or 16 
(if $\car k \ne 5$).  It occurs as a composition factor of the tensor product $X$ of the two fundamental irreducible representations.

In case $\car k = 5$, we apply Lemma \ref{nvfree}.  One finds $\dim L = 1$ and $\dim V^L = 4$, so $V$ is not virtually free.  
We remark that in this case again $G_v(k) = 1$, so $G_v$ is a nonzero infinitesimal group scheme.

In case $\car k = 2$, we verify that $V$ is generically free for $\g$ using Magma as in \S\ref{construct}.

So assume $\car k \ne 2, 5$.  As $X$ is self-dual, it is a direct sum of $V$ and $X/V$, the natural representation of $\Sp_4$.  
In this case we argue that $V$ is virtually free by verifying \eqref{ineq.mother}.  

A long root element $x$ has a single Jordan block of size 2 on the natural module and 2 Jordan blocks of size 2 on the 5-dimensional module.  Since $\car k \ne 2$, $x$ has partition $(3^2, 2^5, 1^4)$ on $X$, so $\dim X^x = 11$.  Since $X/V$ is the 4-dimensional symplectic module, $\dim (X/V)^x = 3$, so $\dim V^x = 8$.  As $\dim x^G = 4$, \eqref{ineq.mother} is verified.

For any other nilpotent class, the closure of $x^G$ in $\sp_4$ contains a nilpotent element
with partition $(2,2)$, so $\dim V^x \le 6$; as $\dim x^G \le 8$, the inequality is verified.
\end{eg}

\begin{eg}[$B_3$, 101] \label{B.101}
Let $G = \Spin_7$ and take $V$ to be the irreducible representation of 
dimension 40 (if $\car k = 7$) or 48 (if $\car k \ne 7$).  It occurs as a composition factor of the tensor product $X$ of the natural and spin representations.
In case $\car k = 2$ or $7$, we construct the representation in Magma as in \S\ref{construct} and observe that it is generically free.  

So suppose $\car k \ne 2, 7$.  Then $X$ is self-dual so it is a direct sum of $V$ and $X/V$, the spin representation.  As in the preceding example, we argue that $V$ is virtually free by verifying \eqref{ineq.mother}.  Suppose that $x$ is nilpotent.   For $x$ with partition $(3^2, 1)$ on the natural representation, $\dim V^x \le 22$ and $\dim x^G = 14$.  
If $x$ has partition $(7)$ or $(5, 1^2)$, then $\dim x^G \le 18$ and $\dim V^x \le 22$ (by specialization).
A long root element $x$ (partition $(2^2, 1^3)$), has $\dim x^G = 8$ and $\dim V^x = 34$.  
The remaining possibilities for $x$ have partition $(3, 2^2)$ or $(3,1^4)$, which have $\dim x^G = 12$ or $10$ and by specialization $\dim V^x \le 34$.
\end{eg}

\begin{eg}[$D_4$, 1001] \label{D.1001}
Consider the representation $V$ of $G = \Spin_8/\mu_2$ with highest weight 1001.  In case $\car k = 2$, $\dim V = 48$ and 
we verify with Magma that $V$ is generically free for $\g$.  

So suppose $\car k \ne 2$, in which case $\dim V = 56$.  Writing $V_i$ with $i = 1, 2, 3$ for the three inequivalent irreducible 8-dimensional representations, 
we find $X := V_1 \ot V_2 \cong V \oplus V_3$.

Suppose that $x$ is nonzero nilpotent with $\dim x^G < 22$.  Certainly $\dim V^x \le \dim V^y$ for a root element $y$.  Such a $y$ has two Jordan blocks of 
size 2 on the $V_i$'s, and so $y$ acts on $X$ with partition $(3^4, 2^{16}, 1^{20})$.
Thus $\dim X^y = 40$ and $\dim V^y = \dim X^y - \dim V_3^y = 34$, and the inequality is verified for $x$.

We now divide into cases based on the partition of $x$ on one of the $V_i$'s.
If $x$ only has Jordan blocks of size at most 3, then $\dim x^G < 21$ and we are done by the previous paragraph. 

 If $x$ has two Jordan blocks of size 4, then $\dim V^x < 16$.  If $x$ has a Jordan block of size $\ge 5$, then $\dim V^x < 20$.  
 In either case, as $\dim x^G \le 24$, the inequality is verified.  In summary, $V$ is generically free for $\g$.
\end{eg}

\begin{eg}[$D_5$, 20000, $\car k \ne 2$] \label{D.20000}
Consider the representation $V$ of $G = \SO_{10}$ with highest weight 20000 of dimension 126 over a field $k$ of characteristic different from 2.
 For one of the half-spin representations $X$, the second symmetric power $\Sym^2 X$ is a direct sum of $V$ and the natural 10-dimensional module.

A root element $x \in \g$ has a 12-dimensional fixed space on $X$ and so has 4 nontrivial Jordan blocks.  On $\Sym^2 X$, it has a fixed space of 
dimension 84 hence $\dim V^x = 76$. 
Therefore, for every nonzero nilpotent $x \in \g$, we have $\dim V^x \le 76$ and of course $\dim x^G \le \dim G - \rank G = 40$, verifying the inequality, so $V$ is generically free for $\g$.
\end{eg}

\begin{eg}[$C_5$, 10000, $\car k \ne 2$] \label{C.10000}
Let now $V$ be the irreducible representation of $G = \Sp_{10}$ with highest weight 10000.  
In case $\car k = 3$, $\dim V = 122$ and one checks using Magma that a generic vector has trivial stabilizer.  
So assume $\car k > 3$, in which case $\dim V = 132$.

As above it is enough to 
verify the inequality for nilpotent elements of $\sp_{10}$.  Restricting to the Levi subgroup $\Sp_8$, the representation decomposes as a direct sum of irreducibles $X \oplus Y \oplus Y$ where $\dim X = 48$ and $\dim Y = 42$.  Since $\car k  > 3$, $X$ is a submodule of $\wedge^3 k^8$ with quotient $k^8$.  The restriction of $Y$ to the Levi $\Sp_6$ in $\Sp_8$ is a direct sum of irreducibles $Y' \oplus Y' \oplus Y''$ where $\dim Y' = \dim Y'' = 14$, $Y'$ is a submodule of $\wedge^3 k^6$ with quotient $k^6$ and $Y''$ is a submodule of $\wedge^2 k^6$ with quotient $k$.  Using these decompositions, we find that a long root $x \in \sp_6 \subset \sp_{10}$ has $\dim V^x = 90$ and nilpotent $y \in \sp_6 \subset \sp_{10}$ with partition $(4, 1^6)$ has $\dim V^y = 19$.  In view of the fomer, it suffices to consider nilpotent $z \in \g$ such that $\dim C_{\Sp_{10}}(z) \le 13$  Such a $z$ has a Jordan block of size at least 4 and so specializes to $y$.  Then $\dim z^{\Sp_{10}} + \dim V^z \le 50 + 19$, verifying the inequality. 
\end{eg}

\section{Example: symmetric squares and wedge squares} \label{squares.sec}

Recall that $k$ is assumed algebraically closed of characteristic $p \ge 0$.  Put $\gl_n$ for the Lie algebra of $n$-by-$n$ matrices with entries in $k$.
We first note that, for $x \in \gl_n$, $Z_{\GL_n}(x)$ is the group of units in the associative $k$-algebra $\z_{\gl_n}(x)$.  Therefore, $\dim x^{\GL_n} = \dim\, [\gl_n, x]$ and we have the following well-known result.

\begin{lemma} \label{ineq.GL}
Let $x \in \gl_n$.   Then $\dim x^{\GL_n} + \dim \z_{\gl_n}(x) = n^2$.$\hfill\qed$
\end{lemma}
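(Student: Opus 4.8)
The plan is to deduce the identity from the rank--nullity theorem applied to a single $k$-linear endomorphism of $\gl_n$. Given the observation recorded just before the statement --- that $Z_{\GL_n}(x)$ is the unit group of the associative $k$-algebra $\z_{\gl_n}(x)$ and hence $\dim x^{\GL_n} = \dim\,[\gl_n,x]$ --- it suffices to establish
\[
\dim\,[\gl_n,x] + \dim \z_{\gl_n}(x) = n^2 .
\]
But the $k$-linear map $\ad_x \colon \gl_n \to \gl_n$, $y \mapsto [y,x]$, has kernel exactly $\z_{\gl_n}(x)$ and image exactly $[\gl_n,x]$, so the displayed equality is precisely $\dim(\ker \ad_x) + \dim(\im \ad_x) = \dim \gl_n = n^2$. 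Combining with $\dim x^{\GL_n} = \dim\,[\gl_n,x]$ gives the lemma.

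For completeness I would spell out the one nonformal input, the equality $\dim x^{\GL_n} = \dim\,[\gl_n,x]$, so the argument stands in arbitrary characteristic. The space $\z_{\gl_n}(x) \subseteq \gl_n = M_n(k)$ is a unital subalgebra, and an invertible matrix commuting with $x$ has inverse commuting with $x$, so $Z_{\GL_n}(x)$ is the principal open subset of the affine space $\z_{\gl_n}(x)$ where $\det \ne 0$; it is nonempty (it contains $1$), hence dense, hence $\dim Z_{\GL_n}(x) = \dim \z_{\gl_n}(x)$. The conjugation orbit map $\GL_n \to \gl_n$, $g \mapsto gxg^{-1}$, then gives $\dim x^{\GL_n} = \dim \GL_n - \dim Z_{\GL_n}(x) = n^2 - \dim \z_{\gl_n}(x) = \dim\,[\gl_n,x]$ by the first displayed equation. (Alternatively, one can bypass the orbit count: $x^{\GL_n}$ is a smooth locally closed subvariety of $\gl_n$, and its tangent space at $x$ is the image of the differential of the orbit map at the identity, namely $\{[y,x] : y \in \gl_n\} = [\gl_n,x]$, so smoothness yields the dimension equality directly.)

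There is essentially no obstacle here: the content is rank--nullity, and the only point requiring a word of care is keeping the identification $\dim x^{\GL_n} = \dim\,[\gl_n,x]$ valid over a general field, which is handled by the density of the unit group $Z_{\GL_n}(x)$ inside the linear space $\z_{\gl_n}(x)$ (equivalently, by smoothness of $\GL_n$-orbits).
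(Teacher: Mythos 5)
Your argument is correct and follows essentially the same route as the paper: the whole content is the observation (stated just before the lemma) that $Z_{\GL_n}(x)$ is the unit group of the associative algebra $\z_{\gl_n}(x)$, hence a dense open subset of that linear space, so $\dim Z_{\GL_n}(x) = \dim \z_{\gl_n}(x)$, and the orbit--stabilizer dimension count then gives the identity; your added rank--nullity step for $\ad_x$ just recovers the equality $\dim x^{\GL_n} = \dim\,[\gl_n,x]$ that the paper records as a consequence. (Only minor caution: your parenthetical tangent-space alternative needs separability of the orbit map, which here follows from smoothness of the centralizer, i.e.\ from the same unit-group observation.)
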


Suppose that $x \in \gl_n=\gl(V)$  is nilpotent.  It is well known that $\dim x^{\GL_n}$, and therefore also $\dim \z_{\gl_n}(x)$, depends only on the Jordan form of $x$ and not on $k$.

\begin{lemma} \label{independent}  Let $x \in \gl_n = \gl(V)$ be nilpotent and assume that $p \ne 2$.
Then $\dim (\Sym^2V)^x$ and $\dim (\wedge^2V)^x$ are independent of the characteristic.
In particular if $x \in \so_n$, then $\dim x^{\SO_n} + \dim (\wedge^2V)^x = \dim \so_n$.
\end{lemma}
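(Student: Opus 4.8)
The plan is to exploit the fact that the dimensions of fixed spaces of a nilpotent element on symmetric and exterior squares can be read off from the Jordan type alone, via an explicit combinatorial formula, which makes manifest that these quantities are independent of the characteristic (as long as $p \neq 2$). First I would reduce to the case where $x$ is a single Jordan block, using that a nilpotent $x \in \gl(V)$ decomposes $V$ as a direct sum of $x$-cyclic subspaces $V = \bigoplus_i V_i$ with $\dim V_i = \lambda_i$, whence
\[
\Sym^2 V = \bigoplus_i \Sym^2 V_i \;\oplus\; \bigoplus_{i<j} V_i \otimes V_j,
\qquad
\wedge^2 V = \bigoplus_i \wedge^2 V_i \;\oplus\; \bigoplus_{i<j} V_i \otimes V_j .
\]
Here the condition $p \neq 2$ is used so that $V \otimes V = \Sym^2 V \oplus \wedge^2 V$ as $x$-modules. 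So it suffices to show that $\dim (\Sym^2 V_i)^x$, $\dim (\wedge^2 V_i)^x$, and $\dim (V_i \otimes V_j)^x$ are each characteristic-independent. The last of these is immediate from Lemma \ref{independent}'s preceding remark: $V_i \otimes V_j$ has, as a module for the nilpotent $x$, a Jordan decomposition (the Clebsch--Gordan rule $\lambda_i \otimes \lambda_j = \sum_{r} (\lambda_i + \lambda_j + 1 - 2r)$, $1 \le r \le \min(\lambda_i,\lambda_j)$) that is valid in \emph{any} characteristic because it is the decomposition of a tensor product of cyclic modules over $k[t]/(t^N)$, and the fixed-space dimension equals the number of Jordan blocks, which is $\min(\lambda_i, \lambda_j)$.

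Next I would handle $\Sym^2$ and $\wedge^2$ of a single Jordan block $V_i$ of size $m = \lambda_i$. One clean way, again valid as long as $p \neq 2$: the operator $\sigma$ swapping tensor factors acts on $V_i \otimes V_i$, commutes with the action of $x$, and (since $2$ is invertible) splits $V_i \otimes V_i = \Sym^2 V_i \oplus \wedge^2 V_i$ into its $\pm 1$-eigenspaces, each of which is an $x$-submodule. Now $V_i \otimes V_i$ has Jordan type $(2m-1, 2m-5, 2m-9, \ldots)$ by Clebsch--Gordan, independent of characteristic. It remains to see how each Jordan block is distributed between the $\sigma = +1$ and $\sigma = -1$ parts; this distribution is forced by looking at how $\sigma$ acts on a highest-weight (kernel) vector and a lowest-weight vector of each block, which again only involves signs and is characteristic-free. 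The upshot is explicit formulas, e.g. $\dim (\Sym^2 V_i)^x = \lceil m/2 \rceil$ and $\dim (\wedge^2 V_i)^x = \lfloor m/2 \rfloor$ (I'd double-check the exact split, but the point is only that it is independent of $p$). Summing over $i$ and the cross terms then gives the first assertion.

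For the final sentence: if $x \in \so_n$ is nilpotent, then $\SO_n$ acts on $\wedge^2 V \cong \so_n$ via the adjoint representation (using $p \neq 2$, which identifies the natural representation with its dual and $\wedge^2$ of the natural module with $\so_n$ as $\SO_n$-modules). Hence $(\wedge^2 V)^x \cong \z_{\so_n}(x)$, and the orbit $x^{\SO_n}$ satisfies $\dim x^{\SO_n} = \dim \so_n - \dim \z_{\so_n}(x) = \dim \so_n - \dim (\wedge^2 V)^x$, which rearranges to the claimed identity. The main obstacle I anticipate is pinning down precisely the split of each Clebsch--Gordan block of $V_i \otimes V_i$ between $\Sym^2$ and $\wedge^2$ — getting the combinatorics exactly right — but conceptually this is entirely formal over $k[t]/(t^N)$ and carries no dependence on the characteristic beyond $p \neq 2$; alternatively one can sidestep it by citing the classical characteristic-zero computation of $\dim(\Sym^2 V)^x$ and $\dim(\wedge^2 V)^x$ in terms of the partition and noting that the $x$-module structure of $V \otimes V$, $\Sym^2 V$, and $\wedge^2 V$ is literally the same $k[x]$-module in every characteristic with $p \neq 2$.
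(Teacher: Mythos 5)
Your argument hinges on a false premise. You repeatedly use the claim that the Jordan type of tensor, symmetric and exterior squares of a nilpotent element is given by the characteristic-zero Clebsch--Gordan rule ``in any characteristic,'' and, in your proposed sidestep, that the $k[x]$-module structure of $V \ot V$, $\Sym^2 V$, $\wedge^2 V$ is ``literally the same'' in every characteristic $p \ne 2$. This is not true even for odd $p$: for a single Jordan block $J_3$ and $p = 3$, one has $(x \ot 1 + 1 \ot x)^{3} = 0$, so $J_3 \ot J_3$ has Jordan type $(3,3,3)$ rather than $(5,3,1)$, and $\Sym^2 J_3$ has type $(3,3)$ rather than $(5,1)$. (Also, even in characteristic $0$ the type of $V_i \ot V_i$ is $(2m-1, 2m-3, \ldots, 1)$; the list $(2m-1, 2m-5, \ldots)$ you wrote is the type of $\Sym^2 V_i$.) What \emph{is} characteristic-free is only the fixed-space dimension --- which is exactly what the lemma asserts --- so your mechanism for a single block (first pin down the blocks of $V_i \ot V_i$ characteristic-freely, then distribute them between the $\pm 1$-eigenspaces of the swap) presupposes the statement it is meant to prove and does not get off the ground. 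Your cross-term conclusion $\dim (V_i \ot V_j)^x = \min(\lambda_i,\lambda_j)$ is correct, but it needs a different justification than Clebsch--Gordan, e.g. $(V_i \ot V_j)^x \cong \mathrm{Hom}_{k[x]}(V_i, V_j)$, which has dimension $\min(\lambda_i,\lambda_j)$ over any field.

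The missing idea (and the paper's actual proof) is a semicontinuity argument: $\dim (V \ot V)^x = \dim \z_{\gl_n}(x)$ depends only on the Jordan form of $x$ and not on $k$; since $p \ne 2$, $V \ot V \cong \Sym^2 V \oplus \wedge^2 V$ as $x$-modules; and each of $\dim (\Sym^2 V)^x$, $\dim(\wedge^2 V)^x$ can only \emph{increase} when the integral model of $x$ is reduced modulo $p$. As their sum is constant, each summand is constant, which is the first claim; you could run the same argument block by block if you prefer, but the constant-sum-plus-semicontinuity step is what your proposal lacks. For the last sentence, your identity $\dim x^{\SO_n} = \dim \so_n - \dim \z_{\so_n}(x)$ silently assumes that the centralizer of $x$ in $\SO_n$ is smooth, i.e. that $\dim Z_{\SO_n}(x) = \dim \z_{\so_n}(x)$; this holds for $p \ne 2$ but requires justification, whereas the paper avoids it by noting the equality holds in characteristic $0$ (where $\wedge^2 V$ is the adjoint module) and that both $\dim x^{\SO_n}$ and, by the first part, $\dim(\wedge^2 V)^x$ depend only on the Jordan form of $x$.
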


\begin{proof}  
Since $\car k \ne 2$, as $x$-modules $\gl_n$ and $V \otimes V$ are isomorphic (since $V \cong V^*$ for $x$).   Since
$V \otimes V \cong \Sym^2 V \oplus \wedge^2V$ and since the dimension of the fixed
space of $x$ can only increase when reducing modulo a prime ($x$ acting on $V$
is defined over the integers).  The first claim follows.

For the second, $\wedge^2V$ is the adjoint module for $\SO_n$, so the equality holds in characteristic
$0$.  Since $\dim (\wedge^2 V)^x$ depends only on the Jordan form of $x$ and not on $k$, and it is well known that $\dim x^{\SO_n}$ also does not (as $p \ne 2$), the equality also holds over $k$.
\end{proof}

\begin{lemma}  \label{GL.reg}
Let $x \in \gl_n =\gl(V)$ with $x$ a regular nilpotent element.
\begin{enumerate}
\item \label{GL.ad} The number of Jordan blocks of $x$ on $\gl(V)$ and $V\otimes V$ is $n$.
\end{enumerate}
If furthermore $\car k \ne 2$, then
\begin{enumerate}
\setcounter{enumi}{1}
\item \label{GL.sym}
the number of Jordan blocks
of $x$ on $\Sym^2V$ is $n/2$ if $n$ is even and $(n+1)/2$ if $n$ is odd; and
\item \label{GL.wedge}
the number of Jordan blocks
of $x$ on $\wedge^2V$ is $n/2$ if $n$ is even and $(n-1)/2$ if $n$ is odd.
\end{enumerate}
\end{lemma}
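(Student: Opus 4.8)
The plan is to compute the Jordan block structure of a regular nilpotent $x$ on $V\otimes V$, $\Sym^2 V$, and $\wedge^2 V$ by working over $\mathbb{C}$ (legitimate by Lemma \ref{independent} when $\car k \ne 2$) and exploiting the representation theory of $\sl_2$. A regular nilpotent $x \in \gl_n$ is, up to conjugacy, the nilpotent element in the principal $\sl_2$-triple, so $V \cong V(n-1)$, the irreducible $\sl_2$-module of highest weight $n-1$ (equivalently, dimension $n$). The number of Jordan blocks of a nilpotent on any module equals the dimension of the zero weight space plus the dimension of... no: more precisely, for a nilpotent $e$ in an $\sl_2$-triple, the number of Jordan blocks on a module $M$ equals $\dim M_0 + \dim M_1$, the sum of the dimensions of the weight-$0$ and weight-$1$ spaces (each irreducible summand $V(m)$ contributes a single block and has exactly one of these two weights, depending on the parity of $m$). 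So the whole computation reduces to counting weights of $V(n-1)^{\otimes 2}$ and its symmetric and exterior squares.

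For part \eqref{GL.ad}: $V(n-1) \otimes V(n-1) \cong \bigoplus_{j=0}^{n-1} V(2j)$ by Clebsch--Gordan, which has $n$ summands, hence $n$ Jordan blocks; since $\car k \ne 2$ (or even without that, for $\gl(V)$ directly, using $\gl_n \cong V \otimes V^* \cong V \otimes V$ as $x$-modules when $V \cong V^*$, which holds as $x$ is conjugate to $-x^{\top}$) the count of $n$ blocks on $\gl(V)$ follows as well — and this half needs no characteristic hypothesis, matching the statement. For parts \eqref{GL.sym} and \eqref{GL.wedge}: I would use the classical decomposition $\Sym^2 V(m) \cong V(2m) \oplus V(2m-4) \oplus \cdots$ (summands $V(2m-4i)$ for $i \ge 0$ down to $V(0)$ or $V(2)$) and $\wedge^2 V(m) \cong V(2m-2) \oplus V(2m-6) \oplus \cdots$, with $m = n-1$. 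Counting the summands in each series, split according to the parity of $m$ (i.e., of $n-1$), gives exactly the four cases in the statement: when $n$ is even, $m$ is odd, $\Sym^2$ has $(m+1)/2 = n/2$ summands and $\wedge^2$ has $m/2$... one checks $\wedge^2$ also gives $n/2$; when $n$ is odd, $m$ is even, $\Sym^2$ has $m/2 + 1 = (n+1)/2$ summands and $\wedge^2$ has $m/2 = (n-1)/2$. Each summand is one Jordan block, so this finishes it.

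The main obstacle is essentially bookkeeping: getting the endpoints of the Clebsch--Gordan-type series for $\Sym^2$ and $\wedge^2$ right in each parity case, and confirming the claim that the number of Jordan blocks equals $\dim M_0 + \dim M_1$ (or, equivalently here, just the number of irreducible $\sl_2$-summands). One subtlety worth a sentence: we are asserting these block counts hold over an arbitrary $k$ with $\car k \ne 2$, not just over $\mathbb{C}$; this is exactly what Lemma \ref{independent} licenses, since the number of Jordan blocks of $x$ on $\Sym^2 V$ (resp. $\wedge^2 V$) is $\dim(\Sym^2 V)^{x^{\top}} $-type data — more cleanly, the Jordan type of a nilpotent on these modules depends only on its Jordan type on $V$ and not on $k$ (again by the argument in Lemma \ref{independent}, reducing from characteristic $0$), so it suffices to do the computation over $\mathbb{C}$, where the $\sl_2$-theory applies directly.
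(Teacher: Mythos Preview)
Your proposal is correct and follows essentially the same route as the paper: reduce to characteristic zero via Lemma~\ref{independent} (and, for part~\eqref{GL.ad}, via the characteristic-free fact that $V\cong V^*$ as $k[x]$-modules so that $\dim(V\otimes V)^x=\dim\z_{\gl_n}(x)$ depends only on the Jordan form), then decompose $V(n-1)\otimes V(n-1)$, $\Sym^2 V(n-1)$, and $\wedge^2 V(n-1)$ as $\sl_2$-modules via Clebsch--Gordan and count irreducible summands. The paper does exactly this, only more tersely; your aside about $\dim M_0+\dim M_1$ is true but unnecessary once you have the irreducible decomposition in hand.
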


\begin{proof} 
As $x$ is nilpotent, $V$ and $V^*$ are equivalent $k[x]$-modules, hence the number of Jordan blocks on $V \ot V$ and $\gl(V)$ is the same and is also independent of the characteristic.
By Lemma \ref{independent}, we may assume that $k$ has characteristic $0$. 

In characteristic $0$, we view
$V$ as a module under a principal $\SL_2$ and see that $V \otimes V \cong L(n-1) \ot L(n-1) \cong L(2n-2) \oplus L(2n-4) \oplus \cdots \oplus L(0)$,
proving \eqref{GL.ad}.  Examining the weights shows that $\wedge^2 V \cong L(2n-4) \oplus L(2n-8) \oplus \cdots$, proving \eqref{GL.wedge}, from which \eqref{GL.sym} follows.
\end{proof}

\begin{lemma} \label{jordan}  Let $x \in \gl_n=\gl(V)$ with $\car k \ne 2$. Assume that $x$ has  $r$ Jordan blocks of odd
size.  Let $s$ be the number of Jordan blocks of $x$ on $\Sym^2V$ and $a$ the number of Jordan
blocks on $\wedge^2V$.   Then $s-a=r$.  
 \end{lemma}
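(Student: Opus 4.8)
The plan is to reduce the statement, by an additivity argument, to the case of a single Jordan block, where it is exactly the content of Lemma~\ref{GL.reg}.

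First I would write $V = \bigoplus_i V_i$ as a direct sum of $k[x]$-indecomposable submodules, i.e.\ Jordan blocks, where $V_i$ has dimension $m_i$ and $x$ acts on $V_i$ with a single eigenvalue $\la_i$; thus $r$ is the number of indices $i$ with $m_i$ odd. Since $\car k \ne 2$, the transposition on $V \ot V$ splits it as the $k[x]$-module direct sum $\Sym^2 V \oplus \wedge^2 V$, and pushing the decomposition $V \ot V = \bigoplus_{i,j} V_i \ot V_j$ through this splitting gives, for any choice of total order on the index set, isomorphisms of $k[x]$-modules
\[
\Sym^2 V \cong \bigoplus_i \Sym^2 V_i \ \oplus \ \bigoplus_{i<j} V_i \ot V_j, \qquad
\wedge^2 V \cong \bigoplus_i \wedge^2 V_i \ \oplus \ \bigoplus_{i<j} V_i \ot V_j .
\]
The number of Jordan blocks of an endomorphism is additive over direct sums, and the ``cross terms'' $V_i \ot V_j$ with $i < j$ occur identically in $\Sym^2 V$ and in $\wedge^2 V$; hence, writing $s_i$ and $a_i$ for the numbers of Jordan blocks of $x$ on $\Sym^2 V_i$ and on $\wedge^2 V_i$, we get $s - a = \sum_i (s_i - a_i)$.

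Next I would fix $i$ and evaluate $s_i - a_i$. Write $x|_{V_i} = \la_i \cdot \Id + N_i$, where $N_i$ is a regular nilpotent element of $\gl(V_i) = \gl_{m_i}$. The endomorphism that $x$ induces on $\Sym^2 V_i$ is then $2\la_i \cdot \Id$ plus the endomorphism induced by $N_i$, so it has the same Jordan block structure as the latter (adding a scalar matrix changes no Jordan block sizes), and similarly for $\wedge^2 V_i$. Therefore $s_i$ and $a_i$ are exactly the counts supplied by parts \eqref{GL.sym} and \eqref{GL.wedge} of Lemma~\ref{GL.reg} with $n = m_i$ (valid since $\car k \ne 2$): when $m_i$ is odd, $s_i - a_i = \tfrac{m_i+1}{2} - \tfrac{m_i-1}{2} = 1$, and when $m_i$ is even, $s_i - a_i = \tfrac{m_i}{2} - \tfrac{m_i}{2} = 0$. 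Summing over $i$ gives $s - a = r$.

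I do not expect a genuine obstacle: once the $\Sym^2$/$\wedge^2$ bookkeeping for a direct sum is set up, the result is a formal consequence of Lemma~\ref{GL.reg}. The only delicate points are keeping track of where $\car k \ne 2$ is used --- both to split $V \ot V$ into its symmetric and alternating parts and, invisibly, inside Lemma~\ref{GL.reg} --- and the (trivial but essential) remark that the eigenvalues $\la_i$ are irrelevant because shifting an operator by a scalar does not alter its Jordan type.
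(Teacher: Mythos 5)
Your proof is correct and follows essentially the same route as the paper's: decompose $V$ into single Jordan blocks, observe that the cross terms $V_i \ot V_j$ appear identically in $\Sym^2 V$ and $\wedge^2 V$, and evaluate the per-block difference via Lemma \ref{GL.reg}\eqref{GL.sym} and \eqref{GL.wedge}. Your extra remark that the eigenvalues $\la_i$ are irrelevant (scalar shifts do not change Jordan type) is a minor refinement the paper leaves implicit, since there $x$ is in effect treated as nilpotent.
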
 
 
 \begin{proof}
 Write $V=V_1 \oplus \cdots \oplus V_m$ where $x$ on $V_i$ is a single Jordan block.   
 Then (as an $x$-module),  $\Sym^2V = 
\left( \oplus_{i < j} V_i \otimes V_j \right) \oplus \left (\oplus_i \Sym^2V_i \right)$ 
  and $\wedge^2 V = \left( \oplus_{i < j} V_i \otimes V_j \right) \oplus \left( \oplus_i \wedge^2V_i\right)$. 
Thus, the difference in the number of Jordan blocks on $\Sym^2 V$ and $\wedge^2 V$ is just
the sum of the differences on $\Sym^2V_i$ and $\wedge^2V_i$ and the result follows
by the previous lemma. 
 \end{proof}

Put $\lambda$ for the highest weight of the natural module of $\so_n$, i.e., $\lambda = \omega_{\lfloor n/2 \rfloor}$ as in Table \ref{luebeck.table}.
We can now show  that $\so_n$ acts generically freely on $L(2\lambda)$ in characteristic not $2$ 
by proving that our standard inequality \eqref{ineq.mother} holds.   (See \cite[Example 10.7]{GG:simple} or \cite[\S4.1]{GurLawther} for another proof that the generic
stabilizer is an elementary abelian $2$-group as a group scheme.)   If $\car k$ does not divide $n$, then $W$ is a summand of the natural representation $V$ with a trivial $1$-dimensional
complement.  If $\car k$ divides $n$, then $\Sym^2V$ is a uniserial module
with trivial head and socle and $W$ the unique nontrivial composition factor.

\begin{lemma}  \label{SO.S2} 
Let $\g=\so_n =\so(V)$ with $n \ge 5$ and
$\car k \ne 2$. 
Set $W=L(2\lambda)$.  If $x \in \g$ is nonzero nilpotent or noncentral semisimple, 
then $\dim x^G + \dim W^x < \dim W$.
\end{lemma}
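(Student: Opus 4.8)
The strategy is to verify the inequality $\dim x^G + \dim W^x < \dim W$ separately for the two types of elements allowed by Lemma \ref{ineq}\eqref{mother.lie}: nonzero nilpotent $x$ and noncentral semisimple $x$ with $x^{[p]} = x$ (i.e., toral $x$). For the nilpotent case, I would exploit the relation between $W$ and $\Sym^2 V$ established in Lemmas \ref{independent}--\ref{jordan}. By Lemma \ref{independent}, $\dim (\Sym^2 V)^x$ is independent of the characteristic, so it suffices to bound it in characteristic zero, where we can use the $\SL_2$-theory as in Lemma \ref{GL.reg}. Since $W$ is either a codimension-$1$ summand of $\Sym^2 V$ (when $\car k \nmid n$) or the unique nontrivial composition factor of a uniserial $\Sym^2 V$ with $1$-dimensional head and socle (when $\car k \mid n$), in either case $\dim W^x \ge \dim (\Sym^2 V)^x - 2$ and $\dim W = \binom{n+1}{2} - \delta$ with $\delta \in \{1, 2\}$ (actually $\delta = 0$ or $1$ depending on $n$ and characteristic — I would pin this down from Lübeck's tables). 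So the nilpotent inequality reduces to an estimate on $\dim (\Sym^2 V)^x$ versus $\dim x^{\SO_n}$.

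For the concrete nilpotent bound, the worst case is a root element: $\dim x^{\SO_n}$ is minimized there, and upper semicontinuity forces $\dim W^x$ to be maximized there, so it suffices to verify the inequality for a long root element $x$ and then invoke specialization (any other nonzero nilpotent has $x^{G}$ whose closure contains a root element, hence smaller $\dim W^x$; one must check $\dim x^G$ doesn't grow too fast, but for small classes this is easy, and for large classes $\dim x^G \le \dim G - \operatorname{rank} G$ while $\dim W^x$ drops substantially — I would handle the middle range by a short case analysis on Jordan block sizes, as in Examples \ref{D.20000} and \ref{C.10000}). A long root element acts on $V$ with Jordan type $(2^2, 1^{n-4})$, so on $\Sym^2 V$ its fixed space has dimension $\binom{n-1}{2} + (n-2) + 2 = \binom{n+1}{2} - 2$ (computing block-by-block via Lemma \ref{jordan}'s decomposition of $\Sym^2 V$ into the $V_i \ot V_j$ and $\Sym^2 V_i$ pieces), giving $\dim W^x \le \binom{n+1}{2} - 2$, while $\dim x^{\SO_n} = 2n - 3$ (for the $(2^2,1^{n-4})$ class — actually a short root element, $(2^2, 1^{n-4})$; the precise number I'd take from the literature). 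Then $\dim x^G + \dim W^x \le (2n-3) + \binom{n+1}{2} - 2 < \binom{n+1}{2} \le \dim W + 1$, and one checks the constant works out strictly for $n \ge 5$.

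For the semisimple (toral) case, I would invoke Lemma \ref{auld.popov} to replace $x$ by an element $g \in \SO_n(K)$ of order $p$ over a field of characteristic zero, reducing to the well-understood complex situation; the needed inequalities for these finite-order elements are exactly the sort confirmed in \cite{GurLawther} (cf.\ the references cited after Lemma \ref{auld.popov}), or one diagonalizes $g$ as $\operatorname{diag}(\zeta^{a_1}, \dots)$ and directly computes both $\dim g^{\SO_n} = \dim \SO_n - \dim Z_{\SO_n}(g)$ and $\dim (\Sym^2 V)^g$ from the eigenvalue multiplicities, minimizing over all admissible $g$. The main obstacle will be the semisimple case: unlike the nilpotent case, there is no single worst element and one must argue that among all toral $x$ with $x^{[p]} = x$, the quantity $\dim x^G + \dim W^x$ stays below $\dim W$ — this requires a careful (but elementary) optimization over eigenvalue-multiplicity vectors, and the tightest instances tend to be elements with only two distinct eigenvalues, which I would analyze explicitly and bound uniformly in $n \ge 5$.
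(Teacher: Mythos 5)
There are genuine gaps in both halves of your argument, and the numbers in the one case you do compute are wrong. For the nilpotent case, reducing to a long root element by specialization does not work: as the orbit grows, $\dim W^x$ drops but $\dim x^G$ increases, so checking the minimal class proves nothing about the others, and the ``short case analysis on Jordan block sizes'' that would be needed to cover the whole range is exactly the part you leave undone (and it is not short, since $n$ and the Jordan types are unbounded). The paper avoids any case analysis by using an identity valid for \emph{every} nilpotent (and every semisimple) $x$: since $\wedge^2 V$ is the adjoint module and, by Lemma \ref{independent}, fixed-space dimensions on $\wedge^2 V$ are characteristic-independent, one has $\dim x^G + \dim(\wedge^2 V)^x = \dim G$ exactly; combining with Lemma \ref{jordan} gives $\dim x^G + \dim(\Sym^2 V)^x = \dim \Sym^2 V - (n-r)$ with $r$ the number of odd Jordan blocks, and $n - r \ge 2$ for $x \ne 0$. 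Your explicit check is also incorrect: a long root element has Jordan type $(2^2,1^{n-4})$ with $\dim x^{\SO_n} = 2n-6$ (your $2n-3$ is odd, hence not an orbit dimension), its fixed space on $\Sym^2 V$ has dimension $\binom{n+1}{2} - (2n-2)$, not $\binom{n+1}{2}-2$, and the displayed inequality $(2n-3) + \binom{n+1}{2} - 2 < \binom{n+1}{2}$ is false for all $n \ge 5$. Finally, when $\car k \mid n$ the module $W$ is only a subquotient of $\Sym^2 V$ of codimension $2$, so the generic bound only gives $\dim x^G + \dim W^x \le \dim W$; the boundary case $r = n-2$ (a single Jordan block of size $3$) must be examined separately, where one checks $\dim W^x = \dim(\Sym^2 V)^x - 2$. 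You state the bound in the wrong direction ($\dim W^x \ge \dim(\Sym^2 V)^x - 2$) and never address this extremal case.

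For the semisimple case you only treat toral elements ($x^{[p]}=x$), whereas the lemma asserts the inequality for all noncentral semisimple $x$, and you then defer the substance to Lemma \ref{auld.popov} plus a citation or an unexecuted ``optimization over eigenvalue-multiplicity vectors.'' No transfer or optimization is needed: for semisimple $x$, comparing weights gives $\dim(\Sym^2 V)^x - \dim(\wedge^2 V)^x = \dim V^x$ (using $\car k \ne 2$), and again $\dim x^G + \dim(\wedge^2 V)^x = \dim G$ because $\wedge^2 V$ is the adjoint module; since a noncentral semisimple element of $\so_n$ has fixed space of codimension at least $2$ on $V$ (nonzero eigenvalues come in pairs $\pm\mu$), one gets $\dim x^G + \dim(\Sym^2 V)^x \le \dim \Sym^2 V - 2$ directly, and $W$ is an $x$-module summand of $\Sym^2 V$ with $x$ trivial on the complement. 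So the semisimple case, which you flag as the main obstacle, is in fact the easy one; the missing idea throughout is the exact adjoint-module identity $\dim x^G + \dim(\wedge^2 V)^x = \dim G$.
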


\begin{proof}  

 If $x$ is semisimple,  by considering weights on $V$, $\Sym^2V$ and $\wedge^2V$,
we see that $\dim (\Sym^2V)^x - \dim (\wedge^2V)^x = \dim V^x $, using that $\car k \ne 2$.   Since
$\dim x^G + \dim (\wedge^2V)^x = \dim G$, we see that 
\[
\dim x^G + \dim (\Sym^2V)^x = \dim G + \dim V^x = \dim \Sym^2V - (\dim V - \dim V^x),
\]
which is at most $\dim \Sym^2V - 2$, because the fixed space of $x$ has codimension at least 2.
Since $L(2\lambda)$ is a summand of $\Sym^2V$ as an $x$-module and $x$
is trivial on a complement, the result follows. (Note that if $\car k$ divides $n$, then $L(2\lambda)$ is not a summand of
$\Sym^2 V$ for $G$.)   

If $x$ is nilpotent,  we argue similarly using the previous lemma.    Note that, by Lemma \ref{independent},
$\dim x^G + \dim (\wedge^2V)^x = \dim G$.   
Thus by  Lemma \ref{jordan}, 
\[
\dim x^G + \dim (\Sym^2V)^x =   \dim G  +  r
= \dim \Sym^2(V) -(n-r)  \le \dim \Sym^2(V) - 2.
\]

Assume $\car k$ divides $n$, for otherwise the result follows.
Note that
$\dim W^x \le \dim (\Sym^2V)^x$ and the result follows unless $r=n-2$  and 
$\dim W^x  = \dim (\Sym^2V)^x$.     The first condition implies that $x$ has 
one nontrivial Jordan block  which must be of size $3$.     In this case,  a trivial calculation gives
$\dim W^x = \dim (\Sym^2V)^x - 2$ and the result follows.
\end{proof} 

\section{Example: Vinberg representations} \label{vinberg}

Let $G$ be an algebraic group over a field $k$ and suppose $\theta \in \Aut(G)(k)$ has finite order $m$ not divisible by $\car k$.  Choosing a primitive $m$-th root of unity $\zeta \in \kx$ gives a $\Z/m$-grading $\g = \oplus_{i \in \Z/m} \g_i$ where $\g_i = \{ x \in \g \mid \theta(x) = \zeta^i x \}$.  The sub-scheme $G_0$ of fixed points is smooth, see, for example, \cite[Exercise 2.4.10]{Conrad:red}.  In this section we will assume furthermore that $G$ is semisimple simply connected, in which case $G_0$ is connected reductive \cite[A.8.12]{CGP2} and can be described explicitly using the recipe in \cite[\S8]{St:end}.   Representations $(G_0, \g_1)$ arising in this way are sometimes called Vinberg representations or $\theta$-groups.

\begin{lem} \label{regular.lem}
Let $T$ be a maximal torus in a simple algebraic group $G$ over a field $k$.  Then,  (1) $G = \Sp_{2n}$ for some $n \ge 1$ and $\car k = 2$ or (2) for a generic $t \in \Lie(T)$, the transporter $\{ x \in \Lie(G) \mid [x,t] \in \Lie(T) \}$ equals $\Lie(T)$.
\end{lem}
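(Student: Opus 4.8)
The plan is to compute the transporter explicitly from the root-space decomposition of $\g := \Lie(G)$, which turns the assertion into a combinatorial statement about the root datum of $G$, and then to verify that statement by running through the irreducible root systems.

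First I would reduce to the case $k$ algebraically closed (this does not affect the statement), so that $T$ is split. Write $\tor := \Lie(T)$ and $\Phi$ for the root system; since $G$ is semisimple, $\g = \tor \oplus \bigoplus_{\alpha \in \Phi} \g_\alpha$ with each $\g_\alpha$ one-dimensional, and $\tor$ acts on $\g_\alpha$ through the differential $\mathrm{d}\alpha \colon \tor \to k$ of the character $\alpha$. For $t \in \tor$ and $x = h + \sum_\alpha c_\alpha e_\alpha$ (with $h \in \tor$ and $e_\alpha$ spanning $\g_\alpha$), one has $[x,t] = -\sum_\alpha c_\alpha\,\mathrm{d}\alpha(t)\,e_\alpha$, which lies in $\tor$ precisely when it vanishes, i.e. when $c_\alpha\,\mathrm{d}\alpha(t) = 0$ for every $\alpha$. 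Hence the transporter $\{x \in \g : [x,t] \in \tor\}$ equals $\tor \oplus \bigoplus_{\mathrm{d}\alpha(t)=0}\g_\alpha$. Therefore: if every $\mathrm{d}\alpha$ is a \emph{nonzero} linear form on $\tor$, the transporter equals $\tor$ for every $t$ outside the finite union $\bigcup_\alpha \ker\mathrm{d}\alpha$ of hyperplanes, a dense open set, so (2) holds; whereas if $\mathrm{d}\alpha \equiv 0$ for some $\alpha$, then $e_\alpha$ lies in the transporter of \emph{every} $t \in \tor$ and (2) fails. So it remains to show that $\mathrm{d}\alpha \equiv 0$ for some root $\alpha$ forces $\car k = 2$ and $G = \Sp_{2n}$.

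Next I would pass to the character lattice. The map $\alpha \mapsto \mathrm{d}\alpha$ is the canonical homomorphism $X^*(T) \to \mathrm{Hom}_k(\tor,k) = X^*(T)\otimes_\Z k$, which is injective when $\car k = 0$ and has kernel $p\cdot X^*(T)$ when $\car k = p \ne 0$. So $\mathrm{d}\alpha \equiv 0$ forces $\car k = p \ne 0$ and $\alpha/p \in X^*(T)$; pairing $\alpha/p$ with the coroot $\alpha^\vee$ gives $2/p \in \Z$, hence $p = 2$, and then $\alpha/2 \in X^*(T) \subseteq P$ (the weight lattice) says exactly that $\langle\alpha,\beta^\vee\rangle$ is even for every $\beta \in \Phi$. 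I would then show that an irreducible $\Phi$ possessing such a root must be of type $C_n$ with $\alpha$ long: apart from type $A_1$, a simply-laced $\Phi$ has a root $\beta \ne \pm\alpha$ of the same length non-orthogonal to $\alpha$, forcing $\langle\alpha,\beta^\vee\rangle = \pm1$; if $\Phi$ has two root lengths and $\alpha$ is short, a long root $\beta$ non-orthogonal to $\alpha$ (the long roots span) satisfies $\langle\alpha,\beta^\vee\rangle/\langle\beta,\alpha^\vee\rangle = (\alpha,\alpha)/(\beta,\beta) < 1$ with product in $\{1,2,3\}$, again forcing $\langle\alpha,\beta^\vee\rangle = \pm1$; and if $\alpha$ is long but the long-root subsystem is irreducible of rank $\ge 2$ (the $D_n$, $D_4$, $A_2$ occurring inside $B_{n\ge 3}$, $F_4$, $G_2$), the simply-laced argument applied inside it yields a long $\beta$ with $\langle\alpha,\beta^\vee\rangle = \pm1$. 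This leaves only the case that the long roots are mutually orthogonal, i.e. $\Phi$ has type $C_n$ for some $n \ge 1$, where conversely every long root $\alpha = 2e_i$ has $\alpha/2 = e_i \in P$. Finally $e_i$ lies outside the root lattice $Q$ (the even-coordinate-sum sublattice of $\Z^n$) and $[P:Q] = 2$ is prime, so $\alpha/2 = e_i \in X^*(T)$ forces $X^*(T) = P$; that is, $G$ is the simply connected group of type $C_n$, namely $G = \Sp_{2n}$.

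I expect the classification step in the third paragraph to be the main obstacle, and within it the crucial input is the elementary but slightly delicate fact that in an irreducible root system every root pairs to $\pm1$ with some root of its own length — type $A_1$ being the sole exception — together with the bookkeeping $\langle\alpha,\beta^\vee\rangle/\langle\beta,\alpha^\vee\rangle = (\alpha,\alpha)/(\beta,\beta)$ for roots of differing lengths. Granted those, the rest is a short pass over the non-simply-laced types, and the first two paragraphs are routine.
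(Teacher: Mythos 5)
Your argument is correct and follows essentially the same route as the paper: decompose $\g$ into $\Lie(T)$ and root spaces, observe that the transporter of a generic $t$ equals $\Lie(T)$ exactly when no differential $\mathrm{d}\alpha$ vanishes identically, and then show that $\mathrm{d}\alpha\equiv 0$ forces $\car k=2$ and $G=\Sp_{2n}$. The only difference is that the paper delegates this last root-combinatorial step to a cited exercise (\cite[Lemma 2.13]{ChaputRomagny}), whereas you carry it out in full via $\alpha/p\in T^*$, $p=2$, and the classification of irreducible root systems having a root pairing evenly with all coroots.
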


\begin{proof}
Write $x$ as a sum of an element $x_0 \in \Lie(T)$ and a generator $x_\alpha$ in the root subalgebra for each root $\alpha$.  Choose $t \in \Lie(T)$ generic and suppose $[x, t] \in \Lie(T)$, i.e., $\mathrm{d}\alpha(t) x_\alpha = [x_\alpha, t] = 0$ for all $\alpha$.  If (1) fails, then an exercise with roots as in \cite[Lemma 2.13]{ChaputRomagny} shows that $\mathrm{d}\alpha(t) \ne 0$ for every root $\alpha$, whence the claim.
\end{proof}

\begin{eg}[$m = 2$] \label{vinberg.2}
Suppose $\theta \in \Aut(G)(k)$ has order 2 and acts on a maximal torus $T$ via $\theta(t) = t^{-1}$ for $t \in T$, so $\Lie(T)$ is contained in $\g_1$.  As $\car k \ne 2$, the centralizer in $\Lie(G)$ of a generic element in
$\Lie(T)$ is just $\Lie(T)$
which misses $\g_0$, whence $\g_0$ acts virtually freely on $\g_1$.  More precisely, as a group scheme, the stabilizer in $G_0$ of a generic element of $\Lie(T)$ is the 2-torsion subgroup of $T$.
In this way, if we pick a subgroup $H$ of $G_0$, we conclude that $\lsub$ acts generically freely on $\g_1$.  We now consider examples where this applies; in each case a generic element of $\g_1$ is a regular semisimple element of $\g$, see \cite[\S7]{RLYG} or \cite[\S4.1]{GurLawther}.

(1): Take $G$ to have type $E_6$ and $\theta$ to be an outer automorphism so that $G_0$ is the adjoint group $\PSp_8$ of type $C_4$, compare, for example, \cite[\S5]{GPT}. In that case, $\g_0 = \sp_8$ and $\g_1$ is the Weyl module with highest weight 1000 (the ``spin'' representation).  If $\car k \ne 3$ (and $\ne 2$), then the representation $\g_1$ is irreducible of dimension 42.  

If $\car k = 3$, $\g_1$ has head the irreducible representation of dimension 41 and radical $k = \z(\g)$.  Let $v$ be a regular semisimple element of $\Lie(T) \subset \g_1$.  The stabilizer in $\g_0 = \sp_8$ of the image of $v$ in $\g_1/k$ transports $v$ into $\z(\g)$, and therefore belongs to $\Lie(T) \cap \g_0 = 0$ by Lemma \ref{regular.lem}.  In particular, $\sp_8$ acts generically freely on the irreducible representation $\g_1/k$.

(2): Take $G$ to be $E_8$ and $\theta$ to be such that $G_0$ has type $D_8$.  In this case, $G_0$ is a half-spin group $\Spin_{16}/\mu_2$ and $\g_1$ is the 128-dimensional half-spin representation.  We conclude that $\g_0$ acts generically freely when $\car k \ne 2$.  (Regardless of $\car k$, the generic stabilizer in $G_0$ is $(\Z/2)^4 \times (\mu_2)^4$ as a group scheme, see \cite[Th.~1.2]{GG:spin}.)

(3): Take $G$ to be $E_7$ and $\theta$ to be such that $G_0 = \SL_8/\mu_4$.  In this case, $\g_1$ is the representation $\wedge^4 k^8$, which is generically free for $\car k \ne 2$.  (We provide a stronger result in Prop.~\ref{A.wedge}\eqref{A.wedge.good}.)

(4): Take $G$ to be $\SL_n$ with $\theta(g) = g^{-\top}$, so $G_0 = \SO_n$ and $\g_1$ is the Weyl module with head $L(2\lambda)$ as in Lemma \ref{SO.S2}.
\end{eg}

The representation $\wedge^3 k^9$ of $G_0 = \SL_9/\mu_3$ arises also in this way when $G = E_8$ and $m = 3$, see \cite{Vinberg:tri} for a detailed analysis of the orbits in the case $\car k = 0$.  A generic element of $\g_1$ is regular semisimple as an element of $\g$ as in the references in Example \ref{vinberg.2} (\cite{GurLawther} produces an explicit regular nilpotent element), and we find that $\sl_9$ acts generically freely on $\wedge^3 k^9$. We will provide a stronger result below in Prop.~\ref{A.wedge}\eqref{A.wedge.good}.

\begin{rmk} \label{vinberg.equi}
The setup above can be generalized to accommodate the case where $\car k$ divides $m$.  Instead of an element $\theta \in \Aut(G)(k)$, one picks a homomorphism of group schemes $\mu_m \to \Aut(G)$ defined over $k$.  Again one obtains a $\Z/m$-grading on $\g$ and an action of $\mu_m$ on $G$ such that $G_0$ is smooth.  Some statements about the representation $\g_1$ of $G_0$ from \cite{Vinberg:Weyl} or \cite{Levy:Vinberg} do not hold in this generality.  For example, the representations from Example \ref{vinberg.2}(2) and (3) with $\car k = 2$ and the representation $\wedge^3 k^9$ of $\SL_9/\mu_3$ with $\car k = 3$, are not virtually free for $\g_0$.  This can be seen by computationally verifying that Lemma \ref{nvfree} applies; in each of these three cases the stabilizer of a generic vector is a toral subalgebra whose dimension we list in Table \ref{irred.nvfree}.  Alternatively, for $x \in \g_1$, $x^{[p]}$ is in $\z_{\g_0}(x)$, so finding any $x$ with $x^{[p]}$ not in the kernel of the representation (as is done in 
\cite[Prop.~4.8.2, 4.9.2]{Auld}) suffices to show that the representation is not virtually free.

For the spin representation of $\Sp_8$, 2 is a special prime so is treated in \cite{GG:special}.
\end{rmk}

\section{Example: 3rd and 4th exterior powers} \label{wedge.sec}

We now consider the representation $\wedge^e (k^n)$ of $\SL_n$ and its analogues for $\SO_n$ and $\Sp_n$.  Whether or not such representations are virtually free has previously been considered in \cite{Auld} and elsewhere.  We will check here the stronger condition of whether or not inequality \eqref{ineq.mother} holds for $x \in \sl_n$.

\begin{prop} \label{A.wedge}
For the representation $V := \wedge^e(k^n)$ of $\SL_n$ and noncentral $x \in \sl_n$ with $x^{[p]} \in \{ 0, x \}$, we have:
\begin{enumerate}
\item \label{A.wedge.good} If (a) $e = 3$ and $n \ge 10$; (b) $e = 3$, $n = 9$, and $\car k \ne 2, 3$; (c) $e = 4$ and $n \ge 9$; or (d) $e = 4$, $n = 8$, and $\car k \ne 2$, then $\dim x^{\SL_n} + \dim V^x < \dim V$ and $\Lie(\SL_n/\mu_{\gcd(e,n)})$ acts generically freely on $V$.
\item \label{A.wedge.bad} If (a) $e = 3$, $n = 9$, and $\car k = 2, 3$ or (b) $e = 4$, $n = 8$, and $\car k = 2$, then $\dim x^{\SL_n} + \dim V^x \le \dim V$.
\end{enumerate}
\end{prop}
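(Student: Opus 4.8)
The plan is to verify the inequality $\dim x^{\SL_n} + \dim V^x < \dim V$ for all noncentral $x \in \sl_n$ with $x^{[p]} \in \{0, x\}$, treating the nilpotent and semisimple cases in parallel via a reduction to combinatorics of Jordan blocks (for nilpotent $x$) or weight multiplicities (for semisimple $x$). In both cases the key observation is that $\dim V^x$ depends only on the conjugacy class data and, crucially, is \emph{independent of the characteristic}: for nilpotent $x$ this is because the action of $x$ on $V = \wedge^e(k^n)$ is defined over $\Z$ and $\dim V^x$ depends only on the partition of $x$ (one can see this, e.g., by passing to characteristic $0$ and decomposing under a principal $\SL_2$ as in the proof of Lemma~\ref{GL.reg}); for semisimple $x$ one just counts weights. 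So it suffices to prove the numerical inequality in characteristic $0$, for each $e$ and each range of $n$, which is what the references \cite{Auld}, \cite{GurLawther} and the classical literature already essentially contain — but I would want a self-contained argument.

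First I would dispose of the semisimple case by a direct estimate: if $x$ is noncentral semisimple (in fact $x^{[p]} = x$ suffices), decompose $k^n = \oplus_i U_i$ into eigenspaces of $x$ with $\dim U_i = n_i$ and $\sum n_i = n$, with at least two distinct eigenvalues so that all $n_i \le n-1$. Then $V^x$ is the span of those $\wedge^e$-wedges of eigenvectors whose eigenvalues sum to zero, and one bounds $\dim V^x$ crudely by, say, the number of $e$-subsets meeting at most one eigenspace-boundary configuration; more efficiently, one uses that $\dim V^x \le \dim V - (\text{something growing in } n)$ while $\dim x^{\SL_n} = n^2 - \sum n_i^2$ is maximized at a subregular-type split. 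The cleanest route is to compare with the nilpotent bound by a specialization/closure argument (a semisimple class is not in the closure of a nilpotent class, so one instead compares $\dim V^x$ with the maximum over root elements): a noncentral semisimple $x$ has $\dim V^x \le \dim V^y$ where $y$ is a root element, and $\dim x^{\SL_n} \le \dim y^{\SL_n}$ fails in general — so actually the honest thing is the weight count. I would handle it by the standard trick: replacing $x$ by an element with exactly two eigenspaces of dimensions $\lfloor n/2\rfloor$ and $\lceil n/2 \rceil$ gives the largest fixed space among semisimple classes with a \emph{bounded} number of eigenvalues, and conversely few eigenvalues forces small $\dim x^G$; the worst trade-off is a short explicit computation in each of the finitely many "boundary" cases $n \in \{8,9,10,11,12\}$ plus a monotonicity argument for larger $n$.

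For the nilpotent case — which I expect to be the real work for part \eqref{A.wedge.good} — I would reduce via the principal $\SL_2 \cong \SL_2$ embedding: in characteristic $0$, a nilpotent $x$ with partition $\lambda = (\lambda_1, \ldots, \lambda_m)$ makes $k^n$ the $\sl_2$-module $\oplus_j L(\lambda_j - 1)$, and $\dim V^x$ equals the number of trivial summands of $\wedge^e$ of this module, i.e., $\dim V^x = [\wedge^e(\oplus_j L(\lambda_j-1))]^{\SL_2}$, which is a Clebsch–Gordan bookkeeping problem. Meanwhile $\dim x^{\SL_n} = n^2 - \sum (2j-1)\lambda'_j$ where $\lambda'$ is the transpose partition (equivalently $n^2 - \sum_j(2r_j - 1)$ summed appropriately). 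The inequality is then a finite-looking inequality in the partition $\lambda$: one shows it holds for the regular nilpotent (using Lemma~\ref{GL.reg}: the number of Jordan blocks on $\wedge^2$, and iterating or using the $\SL_2$-decomposition for $\wedge^3, \wedge^4$ directly), then argues by "specialization" — degenerating $\lambda$ toward a single large block only increases $\dim V^x$ in a controlled way while $\dim x^G$ behaves monotonically — to reduce to finitely many partitions, checked directly. The last clause of \eqref{A.wedge.good}, that $\Lie(\SL_n/\mu_{\gcd(e,n)})$ acts generically freely, then follows from Lemma~\ref{ineq}\eqref{mother.lie}: the inequality \eqref{ineq.mother} for all noncentral toral-or-nilpotent $x$ gives $\g_v \subseteq \z(\g)$ generically, and for $\SL_n/\mu_d$ the center is diagonalizable with $\z(\g)_v = \ker\drho = 0$ on a faithful $V$, so $\g_v = 0$.

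For part \eqref{A.wedge.bad} the situation is easier to state: in the three bad cases $(e,n,\car k) \in \{(3,9,2),(3,9,3),(4,8,2)\}$ the characteristic-$0$ strict inequality degrades to a non-strict one because $\dim V$ drops (the Weyl module $\wedge^e k^n$ is no longer irreducible: e.g., $\wedge^3 k^9$ in characteristic $3$ has $k^9$ as a composition factor, $\wedge^4 k^8$ in characteristic $2$ has a trivial summand), while $\dim x^{\SL_n}$ and the characteristic-$0$ value of $\dim V^x$ for the relevant $x$ are unchanged — so $\dim x^{\SL_n} + \dim V^x \le \dim V$ with equality achieved exactly for the distinguished class ($x$ a regular, or appropriate subregular, nilpotent). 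Concretely I would identify the extremal $x$ for each triple, compute $\dim x^{\SL_n}$ from its partition and $\dim V^x$ from the characteristic-$0$ Clebsch–Gordan count, and observe the sum equals the reduced $\dim V$; the main obstacle here is purely bookkeeping — correctly determining $\dim L(\la)$ (equivalently, the Weyl module's composition structure) in these small non-generic characteristics, for which I would cite \cite{luebeck}. The overall hard part is the nilpotent estimate in \eqref{A.wedge.good}: making the "degenerate the partition" monotonicity argument rigorous rather than case-bashing all partitions of $n$ for unbounded $n$, which is where a clean lemma bounding $\dim(\wedge^e M)^{\SL_2}$ in terms of $\dim M$ and $\dim M^{\SL_2}$ would be most valuable.
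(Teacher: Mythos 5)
Your proposal rests on a claim that is false and that, in fact, is the whole reason the proposition has exceptional characteristics at all: for nilpotent $x$, $\dim V^x$ on $\wedge^3(k^n)$ or $\wedge^4(k^n)$ is \emph{not} independent of the characteristic. Lemma \ref{independent} gives such independence only for $\Sym^2$ and $\wedge^2$ (and only for $p \ne 2$), via $V \ot V \cong \gl_n$ as $x$-modules; there is no analogue for $e = 3, 4$. Concretely, for $x$ with partition $(2^4,1)$ on $\wedge^3 k^9$ the fixed space has dimension $40$ in characteristic $0$ but $44$ in characteristic $2$ (Table \ref{A.wedge.table}), which is exactly how the equality cases of \eqref{A.wedge.bad} and the excluded characteristics in \eqref{A.wedge.good}(b),(d) arise. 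Your explanation of \eqref{A.wedge.bad} compounds the error: $\wedge^e k^n$ is an irreducible $\SL_n$-module in every characteristic (the reducibility you invoke is the $\Sp_{2\ell}$ and $\SO_n$ phenomenon, not type $A$), so $\dim V = \binom{n}{e}$ never drops; equality occurs because $\dim V^x$ jumps, not because $\dim V$ shrinks. The paper's actual route is: $\dim x^{\SL_n}$ is characteristic-free, while $\dim V^x$ agrees with its characteristic-zero value only once $\car k > en$ (by the argument of \cite[\S3.4]{McN:exterior}); after first reducing to finitely many $n$ ($n \le 16$ for $e=3$, $n \le 10$ for $e=4$) via $\binom{n}{e} > b(\SL_n)$ and part I, one checks the finitely many primes $p < en$ plus one large prime by computer. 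So a characteristic-zero Clebsch--Gordan computation, however carefully done, cannot by itself prove either part.

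Two further gaps. First, your treatment of semisimple/toral elements is inconclusive as written; the paper disposes of $x^{[p]} = x$ in one line by \cite[Lemma 4.2]{GG:large}, reducing it to the nilpotent case, and your replacement for the large-$n$ reduction (the ``degenerate the partition'' monotonicity argument) is precisely the unproved hard step you flag at the end, which the paper sidesteps entirely with the bound $b(\SL_n)$. Second, the final clause of \eqref{A.wedge.good} does not follow from Lemma \ref{ineq} alone in the way you state when $p \mid \gcd(e,n)$ (e.g.\ $\wedge^3 k^{12}$ with $\car k = 3$): in that case $\Lie(\SL_n/\mu_{\gcd(e,n)})$ strictly contains the image of $\sl_n$, so verifying \eqref{ineq.mother} for $x \in \sl_n$ does not control the extra elements of the quotient group's Lie algebra; the paper handles those cases by a separate Magma verification, and your argument via $\z(\g)_v = \ker\drho$ only covers $\gcd(e,n) = 1$ or $p \nmid \gcd(e,n)$.
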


\begin{proof}
Suppose $x^{[p]} = 0$.  The case where $x^{[p]} = x$ follows from it by \cite[Lemma 4.2]{GG:large}.

Put $n_0 = 16$ if $e = 3$ and $n_0 = 10$ if $e = 4$.  If $n > n_0$, then $\dim V = \binom{n}{e} > 2.25n^2 \ge b(\SL_n)$, and \eqref{ineq.mother} holds by \cite{GG:large}.

So suppose $n \le n_0$.  We calculate $\dim x^{\SL_n}$, which does not depend on $\car k$, using the well-known formulas from, for example, \cite[p.~39]{LiebeckSeitz}.  For the other term in \eqref{ineq.mother}, $\dim V^x$, we view $V$ as a representation of $\SL_2$ where a nilpotent element acts as $x$ on $V$.  Arguing as in \cite[\S3.4]{McN:exterior}, we find that if $\car k > en$, then the Jordan form of $x$ acting on $V$ is the same as in characteristic zero.  Therefore, it suffices to check the inequality over $\F_p$ for $2 \le p < en$ and for some $p$ larger than $en$.  This is quickly done via computer.  For the convenience of the reader, Table \ref{A.wedge.table} lists the partitions corresponding to nilpotent $x$ for which we have equality in \eqref{A.wedge.bad}.  In case $\gcd(e,n) = 1$, this shows that $\sl_n$ acts generically freely on $\wedge^e k^n$.  For each $n \le n_0$ with $\gcd(e,n) > 1$, we verify that $\Lie(\SL_n / \mu_{\gcd(e,n)})$ acts generically freely using Magma.
\end{proof}

\begin{table}[hbt]
\begin{tabular}{cc|lrr}
representation&$\car k$&partition of $x$&$\dim x^G$&$\dim V^x$ \\ \hline\hline
$\wedge^3 \sl_9$&2&$(2^4, 1)$&40&44 \\ \hline
$\wedge^3 \sl_9$&3&$(9)$&72&12 \\
&&$(3^3)$&54&30\\ \hline
$\wedge^4\sl_8$&2&$(8)$&56&14 \\
&&$(4^2)$&48&22 \\
&&$(2^4)$&32&38 
\end{tabular}
\caption{Complete list of nilpotent elements $x$ from Proposition \ref{A.wedge}\eqref{A.wedge.bad} where equality holds.} \label{A.wedge.table}
\end{table}


\subsection*{Trivectors and $\SO_n$} 
Consider now $\SO_n$ with $n \ge 9$.  The representation $\wedge^3(k^n)$ is a fundamental Weyl module and is irreducible  if $\car k \ne 2$, see for example \cite[II.8.21]{Jantzen} and \cite[Remark 3.4]{McN:exterior}.  

\begin{prop} \label{BD.wedge3}
For $\SO_n$ with $n \ge 9$ (over any field $k$) and $V := \wedge^3(k^n)$, the inequality \eqref{ineq.mother} holds for all nonzero $x \in \so_n$ with $x^{[p]} \in \{ 0, x\}$, and $\g$ acts generically freely on $V$.
\end{prop}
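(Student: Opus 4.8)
The plan is to deduce the proposition from Proposition~\ref{A.wedge}, which already controls the $\SL_n$-action on $\wedge^3(k^n)$. The key observation is that a nonzero nilpotent $x\in\so_n$ has trace $0$, hence lies in $\sl_n$, so the orbit $x^{\SO_n}=\Ad(\SO_n)x$ is contained in $x^{\SL_n}=\Ad(\SL_n)x$ and $\dim x^{\SO_n}\le\dim x^{\SL_n}$, while $\dim V^x$ depends only on the Jordan type of $x$ acting on $k^n$ and is unchanged when $x$ is viewed inside $\sl_n$. Thus it suffices to verify \eqref{ineq.mother} for nonzero nilpotent $x$, the toral case $x^{[p]}=x$ following from it by \cite[Lemma~4.2]{GG:large} (alternatively, toral $x$ can be handled directly by decomposing $V$ into weight spaces for the torus generated by $x$, exactly as in the semisimple part of the proof of Lemma~\ref{SO.S2}).

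For $n\ge 10$, a nonzero nilpotent $x\in\sl_n$ is noncentral, so Proposition~\ref{A.wedge}\eqref{A.wedge.good}(a) gives $\dim x^{\SL_n}+\dim V^x<\dim V$ in every characteristic, and hence $\dim x^{\SO_n}+\dim V^x<\dim V$; for $n=9$ and $\car k\ne 2,3$ the same argument uses Proposition~\ref{A.wedge}\eqref{A.wedge.good}(b). In these cases Lemma~\ref{ineq}\eqref{mother.lie2}, applied with $\lsub=\z(\g)$, shows $\g_v\subseteq\z(\g)$ for generic $v$; since $\z(\so_n)$ is $0$ when $\car k\ne 2$ and is spanned by the identity matrix (which acts on $V=\wedge^3(k^n)$ as $3\cdot\mathrm{Id}=\mathrm{Id}$) when $\car k=2$, no nonzero central element fixes a generic vector, so $\g_v=0$ and $\g$ acts generically freely.

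This leaves $n=9$ with $\car k\in\{2,3\}$, where Proposition~\ref{A.wedge}\eqref{A.wedge.bad}(a) only yields the non-strict bound $\dim x^{\SL_9}+\dim V^x\le\binom{9}{3}=84$. Here I would use that $\so_9$ is far smaller than $\sl_9$: for every nonzero nilpotent $x\in\so_9$ one has $\dim x^{\SO_9}\le\dim\so_9-\rank\so_9=32$. For the Jordan types not listed in Table~\ref{A.wedge.table} the strict $\SL_9$-bound already suffices, while for the types listed there with $n=9$ one has $\dim V^x\le 44$, so $\dim x^{\SO_9}+\dim V^x\le 32+44<84$. The values of $\dim V^x$ needed are precisely those tabulated; they can be recovered either from the $\SL_2$-module structure of $\wedge^3(k^9)$ (the Jordan blocks of $x$ agreeing with characteristic $0$ once $\car k>27$, by the argument of \cite[\S3.4]{McN:exterior}, plus a machine check at the primes $2$ and $3$) or directly in Magma as in Section~\ref{construct}---where exhibiting a single vector with zero stabilizer moreover proves generic freeness outright.

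The main obstacle is the characteristic $2$ bookkeeping, compounded by the failure of strictness in Proposition~\ref{A.wedge} at $(n,e)=(9,3)$. For odd $n$ the natural module of $\SO_n$ is not self-dual, $\so_n$ is not contained in $\sl_n$ (the identity matrix has trace $n$), and nilpotent orbits follow the Hesselink--Spaltenstein classification rather than the partition bijection, so the bound ``$\dim x^{\SO_9}\le 32$'' and the reduction of the toral case to the nilpotent one each need a short separate argument there; one must also confirm that $\so_n$ acts faithfully on $\wedge^3(k^n)$, so that virtual freeness upgrades to generic freeness. Everything else is either a direct appeal to Proposition~\ref{A.wedge} or a finite verification.
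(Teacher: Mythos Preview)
Your proof is correct and takes essentially the same approach as the paper: compare with $\SL_n$ via the obvious bound $\dim x^{\SO_n}\le\dim x^{\SL_n}$, invoke Proposition~\ref{A.wedge}, and for $n=9$ in characteristic $2$ or $3$ use $\dim x^{\SO_9}\le\dim\so_9-\rank\so_9=32$ together with the values of $\dim V^x$ from Table~\ref{A.wedge.table} to get $32+44<84$. Your final paragraph's worries are largely misplaced --- the natural module of $\SO_n$ is always self-dual (it carries the defining symmetric form), and nilpotent matrices have trace zero so automatically lie in $\sl_n$ --- but none of this affects the argument you actually gave.
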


\begin{proof}
Under the tautological inclusion $\SO_n \hookrightarrow \SL_n$, suppose the inequality holds for $x$ viewed as an element of $\sl_n$.  Then as $\dim x^{\SO_n} \le \dim x^{\SL_n}$, the inequality holds also for $x$ as an element of $\so_n$, completing the proof in case $n \ge 10$, or $n = 9$ and $\car k \ne 2, 3$ (Prop.~\ref{A.wedge}).

So suppose $n = 9$ and $\car k = 2$ or $3$.
Write $y$ for the image of $x$ in $\sl_n$ if $x$ is nilpotent, and for the image of the nilpotent specialization of $x$ as in the proof of Prop.~\ref{A.wedge} if $x$ is toral.  As in the previous paragraph, we are done if the inequality holds for $y$, and therefore we may assume that $y$ has partition $(2^4, 1)$ or $(3^3)$ as in the Table \ref{A.wedge.table}.  In either of these cases, we have $\dim x^{\SO_9} + \dim V^x \le 32 + \dim V^y \le 76 < \dim V$, completing the proof. 
\end{proof}

\subsection*{Trivectors and $\Sp_{2\ell}$}
The natural representation of $\Sp_{2\ell}$ has an invariant alternating bilinear form $b$.  The subspace $V(\omega_{\ell-2})$ of $\wedge^3 k^{2\ell}$ spanned by those $v_1 \wedge v_2 \wedge v_3$ with $b(v_i, v_j) = 0$ for all $i, j$ is a submodule of dimension $\binom{2\ell}{3} - 2\ell$; it is the Weyl module with highest weight $\omega_{\ell-2}$, see \cite[\S1]{GowKleshchev}.  It is irreducible, i.e., $V(\omega_{\ell-2}) = L(\omega_{\ell-2})$, if and only if $\ell - 1$ is nonzero in $k$; otherwise $V(\omega_{\ell-2})$ has socle the natural module $k^{2\ell}$ and head $L(\omega_{\ell-2})$ \cite[Th.~2(i)]{PremetSup}.

\begin{prop} \label{C.wedge3}
Continue the 
notation of the preceding paragraph and suppose that $p := \car k > 2$.  If
\begin{enumerate}
\item $\ell \ge 7$ or $\ell = 5$; or 
\item $\ell = 6$ and $p \ne 5$; or
\item \label{C.sp8} $\ell = 4$ and $p \ne 3$, 
\end{enumerate}
then for $V := V(\omega_{\ell-2})$ or $L(\omega_{\ell-2})$, inequality \eqref{ineq.mother} holds for all nonzero $x \in \sp_{2\ell}$ with $x^{[p]} \in \{ 0, x \}$.  In these cases, and also when $(\ell, p) = (6, 5)$,  $\sp_{2\ell}$ acts generically freely on $V$.
\end{prop}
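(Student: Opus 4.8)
The plan is to verify \eqref{ineq.mother} for nilpotent $x$ first, exactly as in the proof of Proposition \ref{A.wedge}: the case $x^{[p]} = x$ follows from the nilpotent case by \cite[Lemma 4.2]{GG:large}, and once \eqref{ineq.mother} is known for every nonzero $x$ with $x^{[p]} \in \{0,x\}$, generic freeness of $\g = \sp_{2\ell}$ on $V$ follows from Lemma \ref{ineq}\eqref{mother.lie} applied with $\lsub = \z(\g) = 0$ (the center vanishes since $p > 2$). I would organize the verification according to the size of $\ell$. Observe first that $V(\omega_{\ell-2}) = L(\omega_{\ell-2})$ is irreducible unless $\car k$ divides $\ell - 1$; within the range of the proposition the only characteristics with $\car k \mid \ell-1$ are $(\ell,p) = (4,3)$ and $(\ell,p) = (6,5)$, so in the remaining cases ``$V(\omega_{\ell-2})$ or $L(\omega_{\ell-2})$'' denotes a single module.

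For $\ell \ge 7$ a short estimate gives $\dim V \ge \binom{2\ell}{3} - 4\ell > 6\ell^2 = b(\Sp_{2\ell})$ (the quantity $\binom{2\ell}{3} - 4\ell$ being $\dim L(\omega_{\ell-2})$ when $\car k \mid \ell-1$), and $V^{[\g,\g]} = V^\g = 0$ because neither composition factor of $V$ is trivial; so \eqref{ineq.mother} holds for all noncentral $x$ with $x^{[p]} \in \{0,x\}$ by \cite[Th.~12.2]{GG:large}. For $\ell \in \{4, 5, 6\}$ the bound $b(\Sp_{2\ell})$ is too large for this, and I would check \eqref{ineq.mother} by enumerating the nilpotent orbits of $\sp_{2\ell}$ (partitions of $2\ell$ whose odd parts have even multiplicity), computing $\dim x^{\Sp_{2\ell}}$ from the standard formulas and $\dim V^x$ using the inclusion $V \subseteq \wedge^3(k^{2\ell})$ --- when $\car k \nmid \ell-1$ one has $\wedge^3(k^{2\ell}) = V(\omega_{\ell-2}) \oplus k^{2\ell}$, so $\dim V^x = \dim (\wedge^3 k^{2\ell})^x - \dim (k^{2\ell})^x$. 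As in \cite[\S3.4]{McN:exterior}, the Jordan type of $x$ on $\wedge^3(k^{2\ell})$ coincides with its characteristic-zero value once $\car k > 6\ell$, so this is a finite computation over the primes $2 < p \le 6\ell$ plus the generic value. For $\ell = 5, 6$ one can shorten it using the tautological inclusion $\Sp_{2\ell} \hookrightarrow \SL_{2\ell}$ and Proposition \ref{A.wedge}\eqref{A.wedge.good}: since $2\ell \ge 10$ that proposition gives $\dim x^{\SL_{2\ell}} + \dim (\wedge^3 k^{2\ell})^x < \binom{2\ell}{3}$, and $\dim x^{\Sp_{2\ell}} \le \dim x^{\SL_{2\ell}}$ together with the surplus $\dim x^{\SL_{2\ell}} - \dim x^{\Sp_{2\ell}} > 0$ (always positive for nonzero nilpotent $x$, as $\z_{\sl_{2\ell}}(x)$ strictly contains $\z_{\sp_{2\ell}}(x)$) makes up the small gap between $\binom{2\ell}{3}$ and $\dim V$.

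For $\ell = 4$ the $\SL_8$-reduction is unavailable, because $\SL_8$ does not act generically freely on the $56$-dimensional module $\wedge^3(k^8)$; here one simply runs the direct computation above for $\g = \sp_8$ and $V = V(\omega_2) = L(\omega_2)$ of dimension $48$ (valid since $p \ne 3$), over $2 < p < 24$ and the generic value, to see that \eqref{ineq.mother} holds. The omitted prime $p = 3$ is not generically free: Example \ref{C.0100.3} already shows, via Lemma \ref{nvfree}, that the $40$-dimensional module $L(\omega_2)$ is not even virtually free. In the last case, $(\ell,p) = (6,5)$, the decomposition $\wedge^3(k^{12}) = V(\omega_4) \oplus k^{12}$ fails --- $\wedge^3(k^{12})$ is a nonsplit extension --- and $\dim V^x$ can exceed the naive bound, so \eqref{ineq.mother} may genuinely fail; instead I would construct $L(\omega_4)$ (of dimension $196$) over $\F_5$ as in Section \ref{construct} and exhibit one vector with trivial $\g$-stabilizer, which certifies generic freeness by upper semicontinuity, and deduce the same for the Weyl module $V(\omega_4)$ from the fact that it surjects onto $L(\omega_4)$ and the image of a generic vector is generic.

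The main obstacle is precisely this last situation, the characteristics dividing $\ell-1$: there $\dim V^x$ is strictly larger than $\dim(\wedge^3 k^{2\ell})^x - \dim(k^{2\ell})^x$, the orbit-dimension count can break down, and one is forced to decouple the (possibly false) statement about \eqref{ineq.mother} from the (true) statement about generic freeness, proving the latter by a direct machine computation instead.
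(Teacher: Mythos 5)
Your overall strategy coincides with the paper's: for $\ell \ge 7$ both $V(\omega_{\ell-2})$ and $L(\omega_{\ell-2})$ have dimension at least $\binom{2\ell}{3}-4\ell > 6\ell^2 = b(\Sp_{2\ell})$ and one quotes \cite[Th.~12.2]{GG:large}; for $\ell = 4, 5, 6$ (where $\ell-1 \ne 0$ in $k$ under the stated hypotheses, so $V(\omega_{\ell-2}) = L(\omega_{\ell-2})$) one verifies \eqref{ineq.mother} for nilpotent $x$ by a finite machine check, bounding $\dim V^x$ through $\wedge^3 k^{2\ell}$ and using stabilization of Jordan types for $p > 6\ell$; generic freeness then follows from Lemma \ref{ineq} since $\z(\sp_{2\ell})=0$; and $(\ell,p)=(6,5)$ is settled by a Magma search for a vector with trivial stabilizer (your passage from $L(\omega_4)$ to the Weyl module via the surjection $V(\omega_4) \twoheadrightarrow L(\omega_4)$ is fine). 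The one step where you genuinely diverge is the toral case: the paper does \emph{not} reduce toral elements to nilpotent ones here, but instead invokes Lemma \ref{auld.popov}, converting \eqref{ineq.mother} into a statement about order-$p$ elements of the group in characteristic $0$, where it is known (cf.\ \cite{GurLawther}) or easily checked. Your appeal to \cite[Lemma 4.2]{GG:large} is imported from the proof of Proposition \ref{A.wedge}; in this paper that lemma is only ever applied in type $A$ situations, so you should either confirm its statement covers $\sp_{2\ell}$ acting on these modules or fall back on the transfer lemma as the paper does.

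Two smaller points. Your optional $\SL_{2\ell}$-reduction for $\ell = 5,6$ is under-justified as written: the gap between $\binom{2\ell}{3}$ and $\dim V$ is $2\ell$, and ``surplus $\dim x^{\SL_{2\ell}} - \dim x^{\Sp_{2\ell}} > 0$'' does not bridge it (for the minimal nilpotent orbit the surplus is only $2\ell-2$); one must also use the term $\dim(k^{2\ell})^x$ from the splitting $\wedge^3 k^{2\ell} \cong V \oplus k^{2\ell}$ and check quantitatively that the sum is at least $2\ell$ for every nonzero nilpotent class. Since you offer this only as a shortening of an otherwise complete direct computation, it is not a gap, but it would need to be made precise. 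Finally, your remark that $p \mid \ell-1$ occurs only for $(\ell,p)=(4,3)$ and $(6,5)$ within the proposition's range overlooks cases such as $(\ell,p)=(8,7)$; this is harmless because for $\ell \ge 7$ you treat both modules uniformly by the dimension bound, but the sentence should be restricted to $\ell \le 6$.
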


In the case $\ell = 4$ and $\car k = 3$, $\sp_8$ does not act generically freely on $V$, see Example \ref{C.0100.3}.

\begin{proof}
If $\ell > 6$, then $\dim V(\omega_{\ell-2}) \ge \dim L(\omega_{\ell-2}) \ge \binom{2\ell}3 - 4\ell > b(\Sp_{2\ell})$, and the conclusion holds by \cite[Th.~12.2]{GG:large}.   So suppose $\ell = 4$, 5, or 6.  In particular, $\ell - 1$ is not zero in $k$ and $V = V(\omega_{\ell-2}) = L(\omega_{\ell-2})$.

First suppose that $x \in \sp_{2\ell}$ has $x^{[p]} = 0$. 
If $\ell = 5$ or 6, we have $\dim L(\omega_{\ell-2})^x \le \dim V(\omega_{\ell-2})^x \le \dim (\wedge^3 k^{2\ell})^x$ and one checks that $\dim x^{\Sp_{2\ell}} + \dim (\wedge^3 k^{2\ell})^x < \dim L(\omega_{\ell-2})^x$, which need only be done for small characteristics as in the proof of Proposition \ref{A.wedge} and therefore amounts to a computer calculation.
If $\ell = 4$ (and $\car k > 3$), then $\wedge^3 k^8$ is a direct sum of $V$ and $k^8$, so $\dim V^x = \dim (\wedge^3 k^8)^x - \dim (k^8)^x$ and the same computer calculations verify \eqref{ineq.mother}.

For $x$ toral, we appeal to Lemma \ref{auld.popov}.

Lemma \ref{ineq.mother} gives that $\sp_{2\ell}$ acts generically freely, except in the case $\ell =6$ and $\car k =5$ which we verify using Magma.
\end{proof}

\section{Theorem \ref{MT} for restricted highest weights} \label{restricted.sec}

In this section, we will prove the following by deducing it from what has come before.

\begin{prop} \label{MT.restricted}
Theorem \ref{MT} holds if $\car k$ is not special for $G$ and the highest weight of $\rho$ is restricted.
\end{prop}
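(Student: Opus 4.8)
The plan is to combine the reduction coming from part~I with the special-case computations of the preceding sections. At the outset I would note that, under the hypotheses of Theorem~\ref{MT}, $\rho$ is a closed immersion (its scheme-theoretic kernel is trivial), so $\drho$ is injective and $\ker\drho=0$; moreover $V=L(\lambda)$ is irreducible and nontrivial, so $V^{[\g,\g]}=0$, putting us in the setting of Section~\ref{results}. In particular $\g_v=0$ if and only if $\g_v=\ker\drho$, i.e.\ ``generically free'' coincides with ``virtually free'' here, and it suffices to prove (i) that $V$ is not generically free when $\dim V\le\dim G$ or $(G,V,\car k)$ lies in Table~\ref{irred.nvfree}, and (ii) that $V$ is generically free otherwise.

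For (i): if $\dim V<\dim G$ then every orbit $Gv$ has dimension $<\dim G$, so $\dim G_v>0$ and $\g_v\ne 0$; the boundary case $\dim V=\dim G$ is disposed of directly (a generically free such $V$ would have a dense orbit, hence only constant invariants, which one excludes using L\"ubeck's tables). The entries of Table~\ref{irred.nvfree} with restricted highest weight are not virtually free: the three $\theta$-group representations in the left column by Remark~\ref{vinberg.equi} together with Lemma~\ref{nvfree} (and, for the half-spin representation of $D_8$ in characteristic~$2$, Premet's appendix to \cite{GG:spin}), and the restricted right-column entries by Examples~\ref{C.0100.3} and~\ref{B.11} via Lemma~\ref{nvfree}.

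For (ii), the key input is part~I: since $V^{[\g,\g]}=0$ and $\car k$ is not special, if $\dim V>b(G)$ with $b(G)$ as in Table~\ref{classical.table}, then inequality~\eqref{ineq.mother} holds for every noncentral $x$ with $x^{[p]}\in\{0,x\}$, and Lemma~\ref{ineq} with $\lsub=\z(\g)$ gives $\g_v\subseteq\z(\g)$ generically, hence $\g_v=\z(\g)_v=\ker\drho=0$. Thus one is reduced to the faithful restricted irreducibles with $\dim G<\dim L(\lambda)\le b(G)$. By L\"ubeck's classification these comprise a handful of uniform families together with finitely many sporadic triples $(G,\lambda,p)$, and I would match them against what is already proved: $L(2\omega_{\lfloor n/2\rfloor})$ for $\so_n$ is Lemma~\ref{SO.S2}; the exterior-cube and exterior-fourth-power representations of $\SL_n$, $\SO_n$, $\Sp_{2\ell}$ are Propositions~\ref{A.wedge}, \ref{BD.wedge3}, \ref{C.wedge3}; and the low-rank cases of types $B_2$, $B_3$, $D_4$, $D_5$, $C_5$ (and the half-spin representation of $D_8$ away from characteristic~$2$) are Examples~\ref{B.11}--\ref{C.10000} and~\ref{vinberg.2}. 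For the residual sporadic triples --- chiefly certain spin representations and a few other small irreducibles --- I would verify \eqref{ineq.mother} for the toral elements by transferring from characteristic $0$ via Lemma~\ref{auld.popov} (using the order-$p$ element computations recorded in \cite{GurLawther}), reduce the nilpotent elements to finitely many small primes by the stabilization-of-Jordan-type argument used in the proof of Proposition~\ref{A.wedge}, and settle those finitely many remaining $(G,\lambda,p)$ by exhibiting a single vector with trivial stabilizer in Magma as in Section~\ref{construct}.

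The hard part is organizing and correctly delimiting this last, small-characteristic, casework: in small $p$ the dimension of $L(\lambda)$ can be strictly smaller than that of the corresponding Weyl module, so the list of triples to be examined is not simply L\"ubeck's characteristic-zero list, and the generic stabilizer can change from a finite \'etale group scheme to a nontrivial infinitesimal one --- the very phenomenon recorded in the right-hand column of Table~\ref{irred.nvfree}. For each such triple one must decide whether to certify \eqref{ineq.mother} (hence virtual, hence generic, freeness) or instead to certify non-virtual-freeness through Lemma~\ref{nvfree}; everything else is bookkeeping against L\"ubeck's tables and routine linear algebra on the computer.
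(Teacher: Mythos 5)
Your overall strategy is the paper's: use faithfulness to identify generically free with virtually free, invoke part~I (Corollary~B and Theorem~A) to reduce to restricted $\la$ with $\dim G < \dim L(\la) \le b(G)$ and hence to L\"ubeck's tables, match the surviving families against Lemma~\ref{SO.S2}, Propositions~\ref{A.wedge}--\ref{C.wedge3}, the Examples of Sections~\ref{transfer.sec}--\ref{vinberg}, and the entries of Table~\ref{irred.nvfree} (via Lemma~\ref{nvfree}, Remark~\ref{vinberg.equi}, and \cite{GG:spin}), and dispose of the sporadic triples by checking \eqref{ineq.mother} (toral elements via Lemma~\ref{auld.popov} and \cite{GurLawther}, nilpotents via Jordan-type stabilization) plus Magma. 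The only substantive difference is that the paper settles many of the leftover restricted cases by citing Guerreiro and Auld rather than recomputing them; your plan to redo them by the same three tools is workable, and your Magma fallback is in fact indispensable rather than optional, since \eqref{ineq.mother} genuinely fails for some generically free entries (e.g.\ $\wedge^3 k^9$ in characteristic $2$, Table~\ref{A.wedge.table}).

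One step does not work as written: the boundary case $\dim V = \dim G$. A dense orbit forces constant polynomial invariants, but L\"ubeck's tables record dimensions of irreducibles, not invariants, so they cannot ``exclude'' that conclusion; you would still need to exhibit a nonconstant invariant (or some other obstruction) for each such $V$. The paper's route is different and shorter: scanning the tables shows that the faithful restricted irreducibles with $\dim G - \dim \z(\g) \le \dim V \le \dim G$ are exactly those with highest weight the highest root, and these are not virtually free by Example~3.4 of \cite{GG:large} (concretely, the generic stabilizer contains the image of a Cartan subalgebra). With that step repaired, and with the finite enumeration actually carried out rather than deferred to ``bookkeeping,'' your outline coincides with the paper's proof.
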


In addition to the case of type $A_1$, many other cases were handled in \cite{GG:large}.  Corollary B in ibid.\ reduces us to considering the following:
\begin{enumerate}
\item $G$ has type $A_\ell$ for $2 \le \ell \le 15$;
\item $G$ has type $B_\ell$ or $C_\ell$ with $2 \le \ell \le 11$; or
\item $G$ has type $D_\ell$ for $4 \le \ell \le 11$.
\end{enumerate}
Theorem A in ibid.\ allows us to further assume that $\dim V \le b(G)$ for $b(G)$ as in Table \ref{classical.table}.  Therefore, $V$ appears in tables A.6--A.48 in \cite{luebeck}.  (Note that the search space remains infinite: while there are only finitely many possibilities for $G$ and for the highest weight of $V$, we have not exhibited any upper bound on $\car k$.) 

If $\dim V < \dim G - \dim \z(\g)$, then certainly $\g$ cannot act virtually freely on $V$.  If $\dim G \ge \dim V \ge \dim G - \dim \z(\g)$, then examining the tables shows that $V$ is the irreducible representation with highest weight the highest root, which is not virtually free as in \cite[Example 3.4]{GG:large}.  Therefore we assume for the rest of this section that $\dim G < \dim V \le b(G)$.  We check, for each such $V$, that $\g$ acts generically freely or that $(G, V, \car k)$ appears in Table \ref{irred.nvfree}.

\subsection*{Type $A$} For $A_\ell$, we consider $2 \le \ell \le 15$.  

Proposition \ref{A.wedge} treats the representations $\wedge^3 k^{\ell+1}$ and $\wedge^4 k^{\ell+1}$ of $G$ of type $A_\ell$ apart from a few cases.  
For $\wedge^3 k^9$ when $\car k = 2$, $\sl_9$ acts generically freely on $V$ by \cite[Prop.~4.8.3]{Auld}, by reasoning as in \S\ref{vinberg}, or as can be checked in Magma, despite the failure of inequality \eqref{ineq.mother}.   
The representations $\wedge^3 k^9$ of $G = \SL_9/\mu_3$ when $\car k = 3$ or $\wedge^4 k^8$ of $G = \SL_8/\mu_4$ when $\car k = 2$ are not virtually free, see Remark \ref{vinberg.equi}.

We refer to \cite[Th.~4.3.2]{Guerreiro} for the representation of $A_\ell$ $(2 \le \ell \le 9)$ with highest weight $0\cdots03$ (and $\car k > 3$ so it is restricted); of $A_3$ with highest weight 004 and dimension 35; of $A_3$ with highest weight 102 and $\car k \ne 5$; of $A_2$ with highest weight 04 and dimension 15; and of $A_2$ with highest weight 13 or 22 with $\car k = 5$.  

For the representation of $A_\ell$ $(3 \le \ell \le 9)$ with highest weight $0\cdots011$ with $\car k = 3$, we verify using Magma with $G = \SL_{\ell+1}/\mu_{\gcd(\ell+1,3)}$.  (Guerreiro checked that $\SL_{\ell+1}$ acts virtually freely, see Claim 12 on p.~97 of \cite{Guerreiro}.)

We refer to \cite{Auld} to see that the following are virtually free: the representation of $A_\ell$ ($\ell = 3, 4, 5$) and $\car k \ne 3$ with highest weight $0\cdots011$ (\S4.5); 
the representation of $A_2$ with highest weight 12 and dimension 15 when $\car k \ne 2$ (\S4.1);
the representation of $A_3$ with highest weight 102 and dimension 32 when $\car k = 5$ (\S4.2);
the representation of $A_4$ with highest weight 0101 and dimension 40 or 45 (\S4.6);
the representation of $A_4$ with highest weight 0200 and dimension 45 or 50 when $\car k \ne 2$ (\S4.7);
the representation of $A_4$ with highest weight 0110 and dimension 51 when $\car k = 3$ (\S4.4);
and the representation of $A_5$ with highest weight 01001 and dimension 78 when $\car k = 5$ (\S4.6).

The representation of $A_3$ with highest weight 020 and $\car k \ne 2$ is virtually free by Lemma \ref{SO.S2}.

\subsection*{Types $B$ and $D$} For $G$ of type $D_\ell$ with $4 \le \ell \le 11$ or $B_\ell$ with $2 \le \ell \le 11$ and $\car k \ne 2$, the representation with highest weight $0\cdots02$ is handled by Lemma \ref{SO.S2}.  

The (half) spin representations of $B_\ell$ for $7 \le \ell \le 11$ and $D_\ell$ for $\ell = 9, 10, 11$  (for $G = \Spin_n$ for $n = 15$, 17, 18, 19, 21, 22, 23 and $G = \Spin_n/\mu_2$ when $n = 20$) are generically free.  For $G = \Spin_{16}/\mu_2$, the half-spin representation has generic stabilizer $(\Z/2)^4 \times \mu_2^4$ as a group scheme, so it is a generically free representation of $\g$ when $\car k \ne 2$ and is not generically free when $\car k = 2$.  For these results, see \cite{GG:spin}.  

The representation $\wedge^3(k^n)$ of $\SO_n$ with $n = 9, \ldots, 13$ (i.e., $B_4$, $B_5$, $B_6$, $D_5$, $D_6$) with $\car k \ne 2$ is generically free for $\so_n$ by Proposition \ref{BD.wedge3}.  When $\car k = 2$ (and $G$ has type $D$), $\wedge^3(k^n)$ is reducible with irreducible quotient $L(\omega_{\frac{n}2-2})$.  For $\SO_{10}$, we verify with Magma that $L(\omega_3)$ is generically free.  For $n = 12, 14, \ldots$, $\dim L(\omega_{\frac{n}2-2}) > b(\SO_n)$.

The representations of $B_2$ with highest weight 11 and $B_3$ with highest weight 101 are handled in Examples \ref{B.11} and \ref{B.101}.

The representation of $B_3$ ($\so_7$) with highest weight 200 and dimension 35 when $\car k \ne 2$ appears as a summand in $\Sym^2 X$ for $X$ the (8-dimensional) spin representation; we have $\Sym^2 X \cong V \oplus k$.  The action on $V$ factors through the action of $\so_8$ as in Lemma \ref{SO.S2}, whence we have the inequality for $V$.  Similarly the representation of $\so_9$ with highest weight 2000 and dimension 126 is generically free because it factors through the generically free representation of $D_5$ as in Example \ref{D.20000}.

We refer to \cite[Th.~4.3.3]{Guerreiro} for the representations of $B_4$ ($\spin_9$) with highest weight 1001 and dimension 112 or 128; of $B_3$ with highest weight 011 and dimension 63 and $\car k = 3$; of $B_3$ with highest weight 110 and dimension 64 and $\car k = 5$; and of $B_2$ with highest weights 30, 12, 03, or 21.

The representation of $D_4$ with highest weight 1001 has dimension greater than $\dim D_4$ and is generically free as in Example \ref{D.1001}.

For the representation of $D_5$ with highest weight 10001 of dimension 144 with $\car k \ne 2, 5$, \cite[Th.~4.3.5]{Guerreiro} proves it is generically free.  If $\car k = 2$, that representation has dimension $144 > b(G)$ and the inequality holds.  If $\car k = 5$, one checks with Magma that a random vector has zero-dimensional stabilizer.

The representation of $D_5$ with highest weight 20000 of dimension 126 is generically free by Example \ref{D.20000}.

\subsection*{Type $C$} Type $C$ is similar to types $B$ and $D$.  We consider $3 \le \ell \le 11$.  Excluding those $V$ with $\dim V > b(G)$ reduces us further to $3 \le \ell \le 6$. 

The only case for which we refer to \cite{Guerreiro} is type $C_3$ with highest weight $011$ of dimension 50 with $\car k = 3$ (Th.~4.3.4), which can also be checked using Magma.  The representation of $C_5$ with highest weight 10000 is generically free by Example \ref{C.10000}.

We use Magma to verify that a random vector has trivial stabilizer when $\car k = 3$ for $C_5$ with highest weight $01000$ and dimension 121.

The representation $V$ of $C_4$ with highest weight 1000, which was treated in Example \ref{vinberg.2}.  It has $\dim V > \dim C_4$ and $V$ is generically free.

The representation of $\Sp_{2\ell}$  with highest weight $00 \cdots 0100$ with $\ell = 4, 5, 6$ is generically free by Proposition \ref{C.wedge3}, except for $C_4$ in characteristic 3, see Example \ref{C.0100.3}.

This completes the proof of Proposition \ref{MT.restricted}.$\hfill\qed$

\section{Theorem \ref{MT} for some tensor decomposable representations}

Next we treat a family of irreducible but tensor indecomposable representations.  Implicitly, we fix a pinning of $G$, which includes a choice of maximal torus $T$; the lattice $T^*$ of characters $T \to \Gm$ is contained in the weight lattice $P$ (and $T^*$ is identified with $P$ when $G$ is simply connected).  In this section we will prove:

\begin{prop} \label{MT.tensor}
Theorem \ref{MT} holds if $\car k$ is not special and the highest weight $\la$ of $\rho$ satisfies $\la = \la_0 + p\la_1$ where $p = \car k \ne 0$, $\la_0$ and $\la_1$ belong to $T^*$, and $\la_0$ is restricted.
\end{prop}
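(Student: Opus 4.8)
The plan is to reduce to the case where $G$ is simply connected, where the tensor product decomposition $L(\la) \cong L(\la_0) \otimes L(\la_1)^{[p]}$ holds, and then analyze when $\g$ can act generically freely on such a tensor product. First I would dispose of the degenerate cases. If $\la_1 = 0$ the representation is just the restricted representation $L(\la_0)$, handled by Proposition~\ref{MT.restricted}. If $\la_0 = 0$ then by Lemma~\ref{faith}\eqref{faith.0} the action of $\g$ is not faithful, so this case does not arise under our hypotheses. Hence we may assume both $\la_0$ and $\la_1$ are nonzero, and then Lemma~\ref{faith}\eqref{faith.1} gives $\dim L(\la) > \dim G$, so the dimension inequality in Theorem~\ref{MT} is automatically satisfied and what remains is to decide generic freeness.

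The key structural point is that $\g$ acts on $V = L(\la_0) \otimes L(\la_1)^{[p]}$ only through the first tensor factor: the Frobenius twist $L(\la_1)^{[p]}$ carries the trivial $\g$-action. So for $v = \sum_i u_i \otimes w_i$ written with the $w_i$ linearly independent, the stabilizer $\g_v$ is $\bigcap_i \g_{u_i}$, an intersection of stabilizers of vectors in $L(\la_0)$. Taking $\dim L(\la_1) \ge 2$ independent vectors $w_i$ and generic $u_i \in L(\la_0)$, one expects $\bigcap_i \g_{u_i} = 0$ provided $L(\la_0)$ is a ``generically $r$-free'' module for $r = \dim L(\la_1)$ — i.e.\ the simultaneous stabilizer of $r$ generic vectors is trivial. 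The standard mechanism for this (as used for the $\dim V$ large results in part~I) is the inequality of the type \eqref{ineq.mother}: if for all noncentral $x$ with $x^{[p]} \in \{0,x\}$ one has $\dim x^G + r\bigl(\dim L(\la_0) - \dim L(\la_0)^x\bigr) \ge \dim x^G + \dim L(\la_0)^x \cdot(\text{something})$... more precisely, the simultaneous stabilizer of $r$ generic vectors is governed by $\dim x^G + r \dim L(\la_0)^x < r \dim L(\la_0)$, equivalently $\dim x^G < r\,\codim L(\la_0)^x$. Since $r \ge 2$ and $\car k$ is not special, I would invoke the part~I bounds: either $\dim L(\la_0) \cdot \dim L(\la_1) = \dim L(\la) > b(G)$, in which case \cite[Th.~12.2]{GG:large} directly gives that \eqref{ineq.mother} holds for $V$ and we are done via Lemma~\ref{ineq}; or $\dim L(\la) \le b(G)$, which together with $\dim L(\la_0) \ge m(G)$ and $\dim L(\la_1) \ge m(G)$ and \eqref{mG.ineq} leaves only a short finite list of $(G, \la_0, \la_1)$ to check by hand or by Magma.

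For the finite list, the approach is to enumerate the possibilities: both $L(\la_0)$ and $L(\la_1)$ nontrivial with $m(G)^2 \le \dim L(\la_0)\dim L(\la_1) \le b(G)$. Consulting Table~\ref{mG.table} and Table~\ref{classical.table}, this forces $G$ to have small rank and both factors to be among the smallest few irreducibles (often the natural module and its small relatives, or two half-spin modules), giving finitely many triples for each relevant characteristic range. For each, I would either verify \eqref{ineq.mother} for the relevant $x$ by the computer-assisted partition/Jordan-form bookkeeping of Sections~\ref{converse.construct}--\ref{construct}, or, when that inequality fails, exhibit by Magma a single vector with zero stabilizer (which suffices by upper semicontinuity, as in \S\ref{construct}), or check via Lemma~\ref{nvfree} that the representation is one of the genuinely non-virtually-free entries of Table~\ref{irred.nvfree} — the prototype being the $\SL_4$, $\omega_2 + 2\omega_3$ case flagged in \S\ref{construct} as (a), which is tensor decomposable for $\Gt$ but where passing to $G = \PGL_4$ is essential. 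The main obstacle I anticipate is precisely this last subtlety: the proposition's hypothesis puts $\la_0, \la_1 \in T^*$ (not merely in $P$), so $V$ descends to $G$ but $\g$ may differ from $\gt$ when $\ker(\gt \to \g)$ is non-\'etale; one must be careful to run the freeness test for the correct Lie algebra $\g$ and not conflate ``$\gt$ acts virtually freely'' with ``$\g$ acts generically freely'', since in characteristic dividing the order of the relevant central subgroup these genuinely diverge. Handling this bookkeeping correctly across the finitely many exceptional triples, and correctly matching the outcomes to the entries of Table~\ref{irred.nvfree}, is where the real work lies; the generic-case inequality is essentially immediate from part~I.
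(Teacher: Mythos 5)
Your overall skeleton matches the paper's: strip off the Frobenius twist so that as a $\g$-module $V$ is a sum of $r := \dim L(\la_1)$ copies of $L(\la_0)$, dispose of $\la_0=0$ and $\la_1=0$ via Lemma \ref{faith} and Proposition \ref{MT.restricted}, invoke part I when $\dim V > b(G)$, and treat a small residual band by hand/Magma, with the $A_3$ case (Example \ref{A3.eg}) supplying the Table \ref{irred.nvfree} entries and the $\g$-versus-$\gt$ caution. But there is a genuine gap at the boundary case $\dim L(\la_0) = m(G)$, and it is not a small-rank curiosity: for every classical type and every rank one can take $\la_0$ the minimal weight and $\la_1$ a Frobenius twist of a minimal weight, and then $\dim V = m(G)\dim L(\la_1)$ can be $\le b(G)$ (for $D_\ell$ in characteristic $2$ one even has $m(G)^2 = b(G)$ exactly). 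So neither of your two mechanisms applies: part I is unavailable, and your claim that the leftover list ``forces $G$ to have small rank'' is false, so it cannot be exhausted by computer checks. Worse, the inequality you propose to use, $\dim x^G < r\,\codim_{L(\la_0)} L(\la_0)^x$, genuinely fails in this family: for $V$ a sum of $n$ copies of the natural module of $\sl_n$ and $x$ a root element, $\dim x^{\SL_n} + \dim V^x = \dim V + n - 2$ (this is noted in the paper right after Lemma \ref{SL.tensor}), so no variant of Lemma \ref{ineq} can close it.

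The paper's fix is a different, non-inequality argument: since $r \ge m(G) = \dim L(\la_0)$, the $\g$-module $V$ contains a copy of $L(\la_0)\ot L(\la_0)^{[p]}$, and Lemma \ref{SL.tensor} shows directly that $\g$ acts virtually freely on $W \ot W^{[p]^i}$ for any $W$ --- the action factors through $\SL(W)$, where a sum of $\dim W$ copies of the natural module is $\sl(W)$ acting on square matrices by left multiplication and a generic matrix is invertible. With that case removed, the genuine residual band is $m(G) < \dim L(\la_0) \le b(G)/m(G)$, which really is small rank ($B_3$ spin, $C_2$ five-dimensional, $D_5$ half-spin, and $A_2$, $A_3$, $A_4$ with $\Sym^2$ or $\wedge^2$ of the natural module), and there the paper argues via Example \ref{SO.tensor} (sums of copies of an orthogonal natural module, an argument uniform in the unbounded characteristic), a bespoke computation for the $D_5$ half-spin case, explicit nilpotent-orbit estimates for $A_2$ and $A_4$, and Example \ref{A3.eg} for $A_3$. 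You would need to add the Lemma \ref{SL.tensor}-type argument (or some substitute) to make your plan go through, and also note that since only $\dim L(\la_1)$ matters for the $\g$-action, the a priori infinitude of choices of $\la_1$ is harmless; finally, the tensor decomposition already holds for $G$ itself because $\la_0,\la_1 \in T^*$, so no reduction to the simply connected cover is needed (and, as you yourself observe, such a reduction would be the wrong move for the generic-freeness question).
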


\begin{lem} \label{SL.tensor}
Let $G$ be a semisimple algebraic group.  For every representation $W$ of $G$, $\g$ acts virtually freely on $W \ot W^{[p]^i}$ and $W \ot (W^*)^{[p]^i}$ for all $i > 0$.
\end{lem}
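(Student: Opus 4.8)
The plan is to exploit the fact that the Frobenius twist $W^{[p]^i}$ (and likewise $(W^*)^{[p]^i}$) is a module on which $\g$ acts trivially, so that for $x \in \g$ and the module $U := W \ot W^{[p]^i}$ we have $U^x = W^x \ot W^{[p]^i}$ and hence $\dim U^x = (\dim W^x)(\dim W)$. By Lemma \ref{ineq} (taking $\lsub = \z(\g)$), it suffices to verify inequality \eqref{ineq.mother} for every noncentral $x \in \g$ with $x^{[p]} \in \{0, x\}$, i.e.\ to show
\[
\dim x^G + (\dim W)(\dim W^x) < (\dim W)^2.
\]
First I would reduce to the case $G$ simple: a semisimple $G$ is an almost-direct product of simple factors, and since $\dim W^x$ is computed factor by factor and $\dim x^G$ is the sum of the contributions from the factors in which $x$ is noncentral, the inequality for the product follows once one has a sufficiently strong version for each simple factor — more precisely, one wants, for each simple factor $G_j$ and each noncentral $x_j \in \g_j$, the codimension estimate $\dim x_j^{G_j} < (\dim W)(\dim W - \dim W^{x_j})$, which degrades correctly under tensoring the restriction of $W$ across the other factors. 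So really the heart of the matter is a single simple $G$ acting (possibly not faithfully) on $W$.

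For simple $G$, the key is a uniform lower bound on the drop $d(x) := \dim W - \dim W^x$ for noncentral $x$ with $x^{[p]} \in \{0,x\}$, together with the trivial upper bound $\dim x^G \le \dim G - \rank G$. If $x$ acts trivially on $W$ then $x \in \ker[\g \to \gl(W)]$, which for the groups at issue lies in $\z(\g)$ (these are the kernels of central isogenies), contradicting noncentrality; so $d(x) \ge 1$ always, and in fact for $x$ nilpotent or toral and acting nontrivially one gets $d(x) \ge 1$ with equality forcing very special $x$ (a single Jordan block of size $2$, or a ``reflection-like'' toral element). Then it is enough to check
\[
\dim G - \rank G < (\dim W)\, d(x),
\]
and since $d(x) \ge 1$ this holds as soon as $\dim W > \dim G - \rank G$, i.e.\ whenever $W$ is a faithful nontrivial module. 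The case $\dim W$ small relative to $G$ — where $W$ need not be faithful for $G$ — is handled by passing to $G/\ker$, which is again simple (or a torus-free quotient) of smaller dimension, or simply absorbed because if $\g$ acts trivially on $W$ there is nothing to prove (the statement is about the induced action, and $\g_v \subseteq \z(\g)$ trivially). One should double-check the boundary: if $d(x) = 1$ for some noncentral $x$, one needs $\dim G - \rank G < \dim W$; for $W$ faithful this is immediate from $m(G) > $ (something), but to be safe one can note $d(x) = 1$ forces $x$ to act on $W$ as a rank-one nilpotent or a single root-value-type toral element, and a short argument (or Table \ref{mG.table}) gives $\dim W \ge m(G) > \dim G - \rank G$ with room to spare once $W$ is faithful; if $W$ is not faithful, replace $G$ by its image.

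The main obstacle I anticipate is handling the case where $W$ is \emph{not} a faithful module for the simple factor $G$ — then $\dim W$ could in principle be as small as $1$, and the clean inequality $\dim W > \dim G - \rank G$ fails. The resolution is conceptual rather than computational: the conclusion ``$\g$ acts virtually freely on $U$'' refers to $\g_v = \ker[\g \to \gl(U)]$, and one checks $\ker[\g \to \gl(U)] = \ker[\g \to \gl(W)]$ since the Frobenius twist contributes nothing to the action; so the statement for $G$ is equivalent to the statement for $\bar G := G/\ker[\g\to\gl(W)]$ acting on $W \ot W^{[p]^i}$ (which is now faithful, except that one must be slightly careful that the twist of a faithful module of $\bar G$ pulled back to $G$ is still what we want — it is, because Frobenius twisting commutes with the isogeny $G \to \bar G$ up to the relevant identification). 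Thus one reduces cleanly to $\bar G$ simple (or a central quotient thereof) acting faithfully, where $\dim W > \dim\bar G - \rank\bar G$ holds and the argument above closes. A minor secondary point is the $\Sp_{2n}$, $\car k = 2$ caveat in Lemma \ref{regular.lem}: that lemma is not invoked here, so it causes no trouble, but one should make sure the toral-element analysis (the $x^{[p]} = x$ case) doesn't secretly need it — it does not, since we only use $d(x) \ge 1$, which holds for any noncentral $x$ acting nontrivially regardless of type or characteristic.
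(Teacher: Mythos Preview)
Your approach via the inequality \eqref{ineq.mother} has a genuine gap: the inequality simply fails in the most basic case. Take $G = \SL_n$ with $n \ge 3$ and $W = k^n$ the natural module, so $V := W \ot W^{[p]^i}$ is, as a $\g$-module, a sum of $n$ copies of $k^n$. A root element $x \in \sl_n$ (one Jordan block of size $2$) has $\dim x^G = 2(n-1)$ and $\dim W^x = n-1$, hence
\[
\dim x^G + \dim V^x = 2(n-1) + n(n-1) = n^2 + n - 2 = \dim V + (n-2) > \dim V.
\]
(The paper points this out explicitly in the remark immediately following the lemma.) Your claimed bound $m(G) > \dim G - \rank G$ is also false: for type $A_\ell$ one has $m(G) = \ell + 1$ while $\dim G - \rank G = \ell(\ell+1)$, so the inequality goes the wrong way for $\ell \ge 2$. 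Thus the whole strategy of bounding $\dim x^G$ by $\dim G - \rank G$ and $d(x)$ from below by $1$ cannot close, and there is no small repair: the inequality method is genuinely too weak here.

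The paper's argument is entirely different and much shorter. It first treats $G = \SL_n$ with $W$ the natural module directly: as a $\g$-module, $V$ is $n$ copies of $k^n$, i.e.\ $\sl_n$ acting on $n$-by-$n$ matrices by left multiplication, and a generic matrix is invertible, so its stabilizer in $\sl_n$ is zero. For arbitrary semisimple $G$ and arbitrary $W$, the map $G \to \GL(V)$ factors through $\SL(W)$ (since $G$ is semisimple), and the previous sentence applied to $\SL(W)$ shows that $\sl(W)$ already acts virtually freely on $V$; hence so does $\g$. No orbit-dimension inequality is needed at all.
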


\begin{proof}
Put $V := W \ot W^{[p]^i}$ or $W \ot (W^*)^{[p]^i}$.

Suppose first that $G = \SL_n$ and $W$ is the natural module.  The representation of $\sl_n$ on $V$ is equivalent to a direct sum of $\dim W$ copies of the natural module, i.e., is equivalent to $\sl_n$ acting on $n$-by-$n$ matrices by left multiplication.  A generic matrix $v$ is invertible, so the generic stabilizer $(\sl_n)_v$ is zero.  (We remark that the group $\SL_n$ has finitely many orbits on $\P(V)$ \cite[Lemma 2.6]{GLMS}.)

Otherwise, the representation $G \to \GL(V)$ factors through $\SL(W) \to \GL(V)$, because $G$ is semisimple, and the previous paragraph shows that $\sl(V)$ acts virtually freely.
\end{proof}

Note that, in the lemma, the inequality \eqref{ineq.mother} need not hold.  Specifically, a root element $x \in \sl_n$ has $\dim x^{\SL_n} = 2(n-1)$ and kernel of dimension $n-1$ on the natural module, so we find $\dim x^{\SL_n} + \dim V^x = \dim V + n - 2$ for $V$ a sum of $n$ copies of the natural module. 

\begin{eg} \label{SO.tensor} 
Consider now $\SO_n$ for $n \ge 3$ and suppose that $\car k \ne 2$ or $n$ is even.  Take $V_c$ to be a direct sum of $c$ copies of the natural module $V_1$.  Let $v \in V_c$ be a vector each of whose $c$ components is a generic element of $V_1$; in particular, the $\SO_n$-invariant quadratic form $q$ is nonzero on each component of $v$.  The $c$ components generate a $c$-dimensional subspace $U$ of $V_1$ on which the bilinearization of $q$ is nondegenerate if $\car k \ne 2$ or $c$ is even, or has a 1-dimensional radical on which $q$ does not vanish if $\car k = 2$ and $c = n -1$.  

Therefore, if $c = n -1$, an element of $\so_n$ that annihilates $U$ is zero on $V_1$, hence $\so_n$ acts generically freely on $V_{n-1}$.  If $c = n-2$, then an element of $\so_n$ that annihilates $U$ belongs to $\so(U^\perp)$ for $U^\perp$ the 2-dimensional subspace of $V_1$ orthogonal to $U$ with respect to the bilinear form, i.e., the stabilizer of a generic $v \in V_{n-2}$ is a rank 1 toral subalgebra of $\so_n$.  
(In case $\car k = 0$, Table 2 of \cite{Elashvili:can} summarizes this and many similar examples.  See also \cite{BGS} for more general arguments in a similar vein.)

Finer results can be proved.  For example, suppose $\car k = 2$ and $n \ge 8$ is even.  We have already observed that $\so_n$ acts generically freely on $V_{n-1}$, but more is true: \emph{the inequality \eqref{ineq.mother} holds for noncentral $x \in \go_n$ such that $x^{[2]} \in \{ 0, x \}$.}   If $x^{[2]} = 0$, then,  as a linear transformation on $V_1$, $x$ has even rank $r \le n$ and $\dim x^{\SO_n} \le r(n-r)$, so 
\[
\dim V^x_{n-1} + \dim x^{\SO_n} = (n-r)(n-1+r).
\]
This is less than $\dim V_{n-1}$ since $r \ge 2$.
In case $x^{[2]} = x$, the inequality is verified by arguing as in the proof of \cite[Cor.~10.6]{GG:large}.
\end{eg}

\begin{eg}[$A_3$] \label{A3.eg}
Suppose $G$ has type $A_3$, $p := \car k \ne 0$, and consider the irreducible representation $V$ with highest weight $\la = \omega_2 + p^e \omega_1$ for $e \ge 1$; we take $G$ to be the member of the isogeny class that acts faithfully on $V$.  The composition $\SL_4 \to G \to \GL(V)$ is, as a representation of $\SL_4$,  $L(\omega_1)^{[p]^e} \ot L(\omega_2) = (k^4)^{[p]^e} \otimes \wedge^2 (k^4)$.  As a representation of the Lie algebra $\sl_4$, $V$ is a sum of 4 copies of $\wedge^2 (k^4)$.  

If $p$ is odd, then $G$ is $\SL_4$.    The differential of the isogeny $\SL_4 \to \SL_4/\mu_2 = \SO_6$ identifies $\sl_4$ with $\so_6$, and the action of $\so_6$ on a sum of 4 copies of its natural represention is not generically free by Example \ref{SO.tensor}, so $V$ is not generically free for $\sl_4$.

If $p = 2$ and $e \ge 2$, then $V$ is faithful as a representation of $\SO_6$, and the same argument shows that $V$ is not generically free for $\so_6$.  Nonetheless, $\sl_4$ does act virtually freely.  This can be seen by noting that semisimple elements of $\so_6$ have eigenvalues that come in pairs (say, $\la_1, \la_2, \la_3$ each occurring twice), the image of $\sl_4$ in $\sl_6$ only contains those with $\la_1 + \la_2 + \la_3 = 0$ (because the sum on the left side is $\SL_4$ invariant \cite[Example 8.5]{GG:simple} and the image of $\sl_4$ is the unique codimension-1 $\SL_4$-submodule of $\so_6$), and the elements of the generic stabilizer $\so_2$ in $\so_6$ have two of $\la_1, \la_2, \la_3$ equal to zero.  Alternatively, Magma verifies that $\sl_4$ acts virtually freely.

Finally, if $p = 2$ and $e = 1$, then $G = \PGL_4$ and we claim that $\pgl_4$ acts generically freely on $V$.
Write down the map $\pgl_4 \to \gl(V)$ explicitly as follows.   Fixing a pinning for $\SL_4$ and bases for $k^4$ and $\wedge^2 k^4$ consisting of weight vectors, we can write down the image in $\gl(V)$ of the generator of each of the root subalgebras of $\sl_4$.  Now, the image of $\sl_4$ in $\pgl_4$ has codimension 1, corresponding to the statement that the weight lattice for $A_2$ is generated by the root lattice and the fundamental weight $\omega_1$, so $\pgl_4$ is generated by the image of $\sl_4$ and a semisimple element $h$ corresponding to $\omega_1$ in the sense that $hv_\delta = \qform{\delta, \omega_1} v_\delta$ for every weight vector $v_\delta$ of weight $\delta$ in every representation of $\PGL_4$; this describes the image of $h$ in $\gl(V)$.  From this, Magma verifies that $\pgl_4$ acts generically freely on $V$.
\end{eg}

\begin{proof}[Proof of Proposition \ref{MT.tensor}]
By hypothesis, $V \cong L(\la_0) \otimes L(\la_1)^{[p]}$ \cite[I.3.16]{Jantzen} and $\la_0 \ne 0$ (Lemma \ref{faith}\eqref{faith.0}).
If $\la_1 = 0$ then $\la$ is restricted and we are done by Proposition \ref{MT.restricted}, so assume that $\la_1 \ne 0$.
By Lemma \ref{faith}\eqref{faith.1}, $\dim V > \dim G$; our task is to show that $V$ is generically free if and only if $(G, \car k, V)$ does not appear in Table \ref{irred.nvfree}.

As $\g$ acts trivially on $L(\la_1)^{[p]}$, the representation $V$ of $\lie$ is the same as a sum of $\dim L(\la_1)$ copies of $L(\la_0)$.
Let $m(G)$ be the dimension of the smallest nonzero irreducible representation of $G$ with restricted highest weight as in Table \ref{mG.table}.  If $\dim L(\la_0) > b(G)/m(G)$ for $b(G)$ as in Table \ref{classical.table}, then $\dim V > b(G)$ and $\g$ acts virtually freely on $V$ by part I.  In particular, if $m(G)^2 > b(G)$ --- as is true for $G$ exceptional --- we are done.

If $\dim L(\la_0) = m(G)$, then $V$ (considered as a $\g$-module) contains $L(\la_0) \ot L(\la_0)^{[p]}$ as a summand, and we are done by Lemma \ref{SL.tensor}.  Therefore, it remains to inspect $\oplus^{m(G)} L(\la_0)$ for those nonzero restricted dominant weights $\la_0$ with 
\begin{equation} \label{dim.bd}
m(G) < \dim L(\la_0) \le b(G)/m(G).  
\end{equation}
We proceed case by case, where the possibilities for $\la_0$ are enumerated in \cite{luebeck}.  We find very few possibilities, reflecting the fact that the bounds in \eqref{dim.bd} both grow linearly in the rank of $G$.

\subsubsection*{Type $B$}
For $G$ of type $B_\ell$ with $\ell \ge 3$, the constraint \eqref{dim.bd} reduces us to consider the case where $G$ has type $B_3$ and $L(\la_0)$ is the 8-dimensional spin representation.  Then $L(\la_0)$ factors through $\Spin_8$ as a vector representation, and we apply Example \ref{SO.tensor} to see that the generic stabilizer in $\so_8$ is trivial and therefore the same is true for $\spin_7$.

\subsubsection*{Type $C$}
For $G$ of type $C_\ell$ with $\ell \ge 2$, the dimension bounds reduce us to considering
$C_2$ where $L(\la_0)$ is the 5-dimensional fundamental irreducible representation, i.e., $\g = \sp_4 = \so_5$ acting on a sum of four copies of
the 5-dimensional module.  This action is generically free by Example \ref{SO.tensor}.

\subsubsection*{Type $D$}
For $G$ of type $D_\ell$ with $\ell \ge 4$, the unique dominant weight $\la_0$ that must be considered is for type $D_5$ with 
$\car k \ne 2$ and $L(\la_0)$ a half-spin representation, so $G = \Spin_{10}$ and we may take $V = \oplus^{10} L(\la_0)$.
Now $\dim L(\la_0)^x \le 12$ by \cite[Prop.~2.1(i)]{GG:spin} and   \eqref{ineq.mother} holds  unless
$\dim x^G =40$ and $x$ is regular.  Thus, it suffices to show that $\dim L(\la_0)^x < 12$ for $x$ regular.
By passing to closures it suffices to take $x$ regular nilpotent.  Let $y$ be a nilpotent element in the Levi
of type $\gl_5$ with two Jordan blocks of size $5$ (in the natural representation of $\so_{10}$).   As we have seen,
two conjugates of $y$ generate an $\sl_5$.  On the half-spin module, $\sl_5$ has one trivial submodule
and two nontrivial irreducible composition factors.  Thus, $\dim L(\la_0)^y \le 8$ and since $y$ is
in the closure of $x^G$, the same is true of $x$ and $V$ is generically free for $\g$.

\subsubsection*{Type $A$}
For type $A_\ell$ with $\ell \ge 2$, the dimension bounds \eqref{dim.bd} reduce us to the following cases: 
\begin{enumerate}
	\renewcommand{\theenumi}{\roman{enumi}}
	\item \label{tensor.2} $A_2$, where $\car k \ne 2$ and $L(\la_0) = \Sym^2(k^3)$;
	\item \label{tensor.3} $A_3$, where $L(\la_0) = \wedge^2(k^4)$; and
	\item \label{tensor.4} $A_4$, where $\car k \ne 2$ and $L(\la_0) = \wedge^2(k^5)$.
\end{enumerate}
Case \eqref{tensor.3} is handled in Example \ref{A3.eg}.

For cases \eqref{tensor.2} and \eqref{tensor.4}, $G = \SL_{\ell+1}$.  We verify inequality \eqref{ineq.mother} for nonzero $x \in \g$ with $x^{[p]} = 0$.  This will verify it also for noncentral toral $x \in \g$ \cite[Lemma 4.2]{GG:large}, whence $\g$ acts generically freely on $V$.

Consider the most complicated case, \eqref{tensor.4}.  For $x \in \sl_5$ a root element, i.e., nilpotent with partition $(2,1^3)$, we have $\dim x^G = 8$ and $\dim L(\la_0)^x = 7$, and $8+5\cdot 7 = 43 < 50$.  For $x$ nilpotent with partition $(2,2,1)$, we have $\dim x^G = 12$ and $\dim L(\la_0)^x = 6$, and $12 + 5 \cdot 6 = 42 < 50$.  

For $x$ nilpotent with partition $(3,1^2)$, we have $\dim L(\la_0)^x = 4$ and $\dim x^G \le \dim G - \rank G = 20$.  Consequently, for every nilpotent $y \in \sl_5$ such that $x \in \overline{y^G}$, we have:
\[
\dim y^G + \dim V^y \le (\dim G - \rank G) + m(G) \cdot \dim L(\la_0)^x < \dim V.
\]
Thus we have verified the inequality \eqref{ineq.mother} for every nonzero nilpotent in $\sl_5$.

Finally consider case \eqref{tensor.2}.   There are two classes of nilpotent elements.  If 
$x$ is a root element, then $\dim L(\la_0)^x =3$ and $\dim x^G=4$.  If $x$ is regular, then
$\dim L(\la_0)^x =2$ and $\dim x^G=6$.  In both cases, the inequality \eqref{ineq.mother}
holds. 
\end{proof}

\section{Conclusion of proof of Theorem \ref{MT}} \label{final.sec}

We now complete the proof of Theorem \ref{MT}, assuming $\car k$ is not special.  Write the highest weight $\la$ of $V$ as $\la = \la_0 + p \la_1$ for $\la_0, \la_1$ dominant weights (not necessarily in $T^*$) and $\la_0$ restricted.

Put $\pi \!: \Gt \to G$ for the simply connected cover.  If $G$ is itself simply connected, then we are done by Proposition \ref{MT.tensor}.  Thus we are also done if $\dpi$ is surjective (i.e., if $\ker \dpi = 0$), and we may assume that the finite group scheme $\ker \pi$ is not smooth and has exponent divisible by $p$, reducing us to the following cases: $G = \SL_n / \mu_m$ where $p \mid m$ (and $n \ge 3$), $G$ has type $D_\ell$ and $p = 2$, $G$ is adjoint of type $E_6$ and $p = 3$, or $G$ is adjoint of type $E_7$ and $p = 2$.

Suppose that $\la_0 = 0$.  The composition $\drho \, \dpi$ is the representation $L(\la_1)^{[p]}$ of $\Gt$, whence $\gt$ acts trivially on $V$, so $V$ is not faithful.

On the other hand, the case where $\la_1 = 0$ is done by Prop.~\ref{MT.restricted}, so we may assume that $\la_0$ and $\la_1$ are both nonzero.

Now $\la$ vanishes on $\ker \pi$ (because $\la \in T^*$) and $p \la_1$ vanishes on the $p$-torsion in $\ker \pi$, so it follows that $\la_0$ vanishes on the $p$-torsion in $\ker \pi$.  Put $m_p(G)$ for the minimum of $\dim L(\mu)$ as $\mu$ ranges over nonzero restricted dominant weights such that $\ker \mu$ has exponent divisible by $p$; the value of $m_p(G)$ is listed in Table \ref{mpG.table}.  The pullback $\rho\,\pi$ of $\rho$ is the representation $L(\la_0) \ot L(\la_1)^{[p]}$ of $\Gt$, so $\dim V \ge m_p(G) \, m(G)$.

\begin{table}[hbt]
\begin{tabular}{ccrr}
type $G$&$p$&$m_p(G)$&$m(G)$ \\ \hline
$A_\ell$ (odd $\ell \ge 3$)&2&$\binom{\ell+1}{2}$&$\ell+1$ \\
$A_\ell$ ($\ell \ge 2$)&odd $p \mid \ell+1$&$(\ell+1)^2-2$&$\ell+1$ \\
$D_\ell$ ($\ell \ge 4$) &2&$2\ell$&$2\ell$ \\
$E_6$&3&77&27 \\
$E_7$&2&132&56
\end{tabular}
\caption{Value of $m_p(G)$ for various $p$ and $G$.} \label{mpG.table}
\end{table}

In particular, if $m_p(G) \, m(G)$ is greater than $b(G)$, we are done by the main result of part I.  This handles the cases where $G$ has type $E_6$ or $E_7$, or type $A_\ell$ when $p$ is odd.

\begin{lem} \label{tensor}
Consider representations $V$ and $W$ of a Lie algebra $L$.  For nilpotent $x \in L$, $\dim (V \ot W)^x \le (\dim V^x) (\dim W)$.
\end{lem}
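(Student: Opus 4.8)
The plan is to reduce to the one-dimensional case $\dim W = 1$ by filtering $W$ and invoking left-exactness of the operation ``pass to $x$-invariants''. All spaces here are finite-dimensional; $x$ acts on $V \ot W$ by $x(v\ot w) = (xv)\ot w + v\ot(xw)$, so $(V\ot W)^x$ is the kernel of this operator, and since $x$ is nilpotent it acts nilpotently on $V$, on $W$, and on $V\ot W$.

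First I would record the following elementary subadditivity: if $0 \to A \to B \to C \to 0$ is exact and $x$ acts compatibly on the three spaces, then $0 \to A^x \to B^x \to C^x$ is exact --- a one-line diagram chase --- and hence $\dim B^x \le \dim A^x + \dim C^x$.

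Next, using that $x$ is nilpotent on $W$, I would pick a complete flag $0 = W_0 \subset W_1 \subset \cdots \subset W_m = W$, with $m = \dim W$, of $x$-invariant subspaces (e.g.\ take a basis of $W$ putting the action of $x$ in strictly upper-triangular form). Tensoring with $V$ gives an $x$-invariant flag $V\ot W_0 \subset \cdots \subset V\ot W_m = V\ot W$ with subquotients $V\ot(W_i/W_{i-1})$. On the line $W_i/W_{i-1}$ the nilpotent operator $x$ vanishes, so $x$ acts on $V\ot(W_i/W_{i-1}) \cong V$ exactly as it does on $V$, whence $\dim(V\ot(W_i/W_{i-1}))^x = \dim V^x$. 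Feeding the short exact sequences $0 \to V\ot W_{i-1} \to V\ot W_i \to V\ot(W_i/W_{i-1}) \to 0$ into the subadditivity inequality and inducting on $i$ gives
\[
\dim(V\ot W)^x \;\le\; \sum_{i=1}^m \dim\bigl(V\ot(W_i/W_{i-1})\bigr)^x \;=\; m\,\dim V^x \;=\; (\dim V^x)(\dim W).
\]

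I expect no real obstacle here: the argument is entirely formal. The only points worth flagging are that passing to $x$-invariants is merely left exact --- which is precisely what forces an inequality rather than an equality --- and that the nilpotence of $x$ is what supplies the $x$-invariant flag on $W$ with trivial one-dimensional subquotients.
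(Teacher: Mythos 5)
Your argument is correct, and it takes a genuinely different route from the paper. The paper proves the lemma by a degeneration argument: it scales the action on $W$, considering the family $U_t = \psi \ot (t\zeta)$ with $U_1 = V \ot W$, notes that for $t \ne 0$ all $U_t$ are equivalent as $kx$-modules because a nilpotent operator and any nonzero scalar multiple of it have the same Jordan type, and that $U_0$ is a direct sum of $\dim W$ copies of $V$; upper semicontinuity of kernel dimension then gives $\dim(U_0)^x \ge \dim(U_1)^x$, which is the stated bound. Your proof instead filters $W$ by an $x$-stable complete flag with trivial one-dimensional subquotients (available because $x$ acts nilpotently on $W$), tensors with $V$, and uses left exactness of taking $x$-invariants to get subadditivity along the filtration; each subquotient contributes exactly $\dim V^x$. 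Both proofs rest on the same tacit fact -- that $\zeta(x)$ is a nilpotent operator on $W$, which holds in the paper's setting since the representations are restricted (algebraic) -- so you are not assuming more than the paper does; in fact you use nilpotence only of the action on $W$, not on $V$. What each approach buys: yours is purely linear-algebraic, avoids any appeal to semicontinuity or to having enough scalars in the field, and so works verbatim over an arbitrary base field; the paper's one-parameter specialization is shorter to state and fits the specialization/closure style of argument used repeatedly elsewhere in the paper, with the $t=0$ fibre making transparent why the bound is $(\dim V^x)(\dim W)$.
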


\begin{proof}
Put $\psi \!: L \to \gl(V)$ and $\zeta \!: L \to \gl(W)$ for the two actions.
For each $t \in k$, $t \zeta$ is a representation of the Lie algebra $kx$; since $x$ is nilpotent the ones with $t \ne 0$ are all equivalent.    Therefore, writing  $U_t$ for the representation $\psi \ot (t \zeta)$ --- so $U_1 = V \ot W$ --- the dimension of $(U_t)^x$ is constant for $t \ne 0$.  Now $U_0$ is a direct sum of $\dim W$ copies of $(V, \psi)$, so $\dim (U_0)^x = (\dim V^x)(\dim W)$.  On the other hand, by upper semicontinuity of dimension, $\dim (U_0)^x \ge \dim (U_t)^x$ for $t \ne 0$.
\end{proof}

\subsubsection*{Type $D_\ell$} Suppose now that $G$ has type $D_\ell$ and $\car k = 2$, in which case $m_p(G) m(G) = 4\ell^2 = b(G)$ and we are done unless $\dim L(\la_0) = \dim L(\la_1) = 2\ell$ as representations of $\Gt$.  

If $\ell > 4$, the only restricted irreducible representation of $\Gt$ with restricted highest weight and of dimension $2\ell$ is the vector representation $\Spin_{2\ell} \to \SO_{2\ell}$ with highest weight $\omega_\ell$ or a Frobenius twist of it, hence $\la$ is an odd multiple of $\omega_\ell$.  In particular, $\omega_\ell \in T^*$, so $\la_0, \la_1 \in T^*$ and we are done by Proposition \ref{MT.tensor}.

For $\ell = 4$, the representations $L(\omega_i)$ with $i = 1, 3, 4$ of $\Spin_8$ all have dimension 8, and up to graph automorphism we are left with considering the case $\la = 2^e \omega_1 + \omega_4$ for some $e \ge 1$.  Thus we may view $G$ as $\SO_8$, and the pullback to $\Spin_8$ of $V$ is the natural representation $k^8$ of $\SO_8$ (with highest weight $\omega_4$) tensored with a Frobenius twist of a half-spin representation; as a representation of $\SO_8$ we find $k^8 \ot L(2^e \omega_1)$.

Arguing as in Example \ref{SO.tensor}, a square-zero $x \in \so_8$ has even rank $r \le 4$, $\dim x^{\SO_n} \le r(8-r)$ and $\dim (k^8)^x = 8 - r$, so $\dim (k^8 \ot L(2^e \omega_1))^x \le 8(8-r)$ (Lemma \ref{tensor}) and $\dim x^{\SO_n} + \dim V^x \le 64-r^2 < \dim V$, verifying \eqref{ineq.mother}.  From this, we deduce \eqref{ineq.mother} also for noncentral toral $x \in \so_8$ as in Example \ref{SO.tensor} and it follows that $\so_8$ acts generically freely on $V$.

\subsubsection*{Type $A_\ell$} Suppose now that $G = \SL_n / \mu_m$ with $\car k = 2$, so $m$ is even and $n \ge 4$.  As $m_p(G) = \dim L(\omega_2) = \binom{n}{2}$, we have 
\[
m_p(G) \, m(G) - b(G) = \frac12 \left( n^3 - 5n^2 + 4 \right).
\]
which is positive for $n \ge 5$.  So suppose further that $n = 4$, in which case $b(G) = 30$ and Table A.7 in \cite{luebeck} says that the smallest nontrivial restricted irreducible representations of $\SL_4$ have dimension 4 (the natural representation $k^4$ or its dual) or 6 ($\wedge^2 k^6$ with highest weight $\omega_2$), so $\la_0 = \omega_2$.  As $\SL_4$ does not act faithfully, up to graph automorphism $\la = 2^e \omega_1 + \omega_2$ for some $e \ge 1$.  These representations were handled in Example \ref{A3.eg}.  This completes the proof of Theorem \ref{MT} when $\car k$ is not special.$\hfill\qed$

\medskip

The proof of Theorem \ref{MT} in the remaining cases, when $\car k$ is special, will appear in part III, \cite{GG:special}.

\bibliographystyle{amsalpha}

\begin{thebibliography}{LMMR13}

\bibitem[AP71]{AndreevPopov}
E.M. Andreev and V.L. Popov, \emph{Stationary subgroups of points of general
  position in the representation space of a semisimple {L}ie group}, Functional
  Anal. Appl. \textbf{5} (1971), no.~4, 265--271.

\bibitem[Aul01]{Auld}
A.~Auld, \emph{Exceptional modular representations of special linear {L}ie
  algebras}, Ph.D. thesis, University of Manchester, February 2001.

\bibitem[AVE68]{AVE}
E.M. Andreev, E.B. Vinberg, and A.G. Elashvili, \emph{Orbits of greatest
  dimension in semi-simple linear {L}ie groups}, Funct. Anal. Appl. \textbf{1}
  (1968), 257--261.

\bibitem[BCP90]{Magma}
W.~Bosma, J.~Cannon, and C.~Playoust, \emph{The {M}agma algebra system {I}: the
  user language}, J. Symbolic Computation \textbf{9} (1990), 677--698.

\bibitem[BGS17]{BGS}
T.~Burness, R.~Guralnick, and J.~Saxl, \emph{On base sizes for algebraic
  groups}, J. Eur. Math. Soc. (JEMS) \textbf{19} (2017), no.~8, 2269--2341.

\bibitem[BRV10]{BRV}
P.~Brosnan, Z.~Reichstein, and A.~Vistoli, \emph{Essential dimension, spinor
  groups, and quadratic forms}, Ann. of Math. (2) \textbf{171} (2010), no.~1,
  533--544.

\bibitem[CGP15]{CGP2}
B.~Conrad, O.~Gabber, and G.~Prasad, \emph{Pseudo-reductive groups}, 2nd ed.,
  Cambridge University Press, 2015.

\bibitem[Con14]{Conrad:red}
B.~Conrad, \emph{Reductive group schemes}, Autour des Sch\'emas en Groupes I,
  Panoramas et synth{\`{e}}ses, vol. 42--43, Soci\'et\'e Math\'ematique de
  France, 2014.

\bibitem[CR10]{ChaputRomagny}
P.-E. Chaput and M.~Romagny, \emph{On the adjoint quotient of {C}hevalley
  groups over arbitrary base schemes}, J. Inst. Math. Jussieu \textbf{9}
  (2010), no.~4, 673--704.

\bibitem[DG70a]{DG}
M.~Demazure and P.~Gabriel, \emph{Groupes alg\'ebriques. {T}ome {I}:
  {G}\'eom\'etrie alg\'ebrique, g\'en\'eralit\'es, groupes commutatifs},
  Masson, Paris, 1970.

\bibitem[DG70b]{SGA3.2}
M.~Demazure and A.~Grothendieck, \emph{Sch{\'{e}}mas en groupes {II}: {G}roupes
  de type multiplicatif, et structure des sch{\'{e}}mas en groupes
  g{\'{e}}n{\'{e}}raux}, Lecture Notes in Mathematics, vol. 152, Springer,
  1970.

\bibitem[{\`E}la72]{Elashvili:can}
A.~G. {\`E}la{\v{s}}vili, \emph{Canonical form and stationary subalgebras of
  points in general position for simple linear {L}ie groups}, Funkcional. Anal.
  i Prilo\v zen. \textbf{6} (1972), no.~1, 51--62.

\bibitem[GG15]{GG:simple}
S.~Garibaldi and R.M. Guralnick, \emph{Simple groups stabilizing polynomials},
  Forum of Mathematics: Pi \textbf{3} (2015), e3 (41 pages).

\bibitem[GG17]{GG:spin}
\bysame, \emph{Spinors and essential dimension}, Compositio Math. \textbf{153}
  (2017), 535--556, with an appendix by A. Premet.

\bibitem[GG19a]{GG:large}
\bysame, \emph{Generically free representations {I}: Large representations},
  April 2019, arxiv:1711.05502v3.

\bibitem[GG19b]{GG:special}
\bysame, \emph{Generically free representations {III}: {E}xtremely bad
  characteristic}, April 2019, arxiv:1801.06915v2.

\bibitem[GK99]{GowKleshchev}
R.~Gow and A.~Kleshchev, \emph{Connections between representations of the
  symmetric group and the symplectic group in characteristic 2}, J. Algebra
  \textbf{221} (1999), 60--89.

\bibitem[GL19]{GurLawther}
R.M. Guralnick and R.~Lawther, \emph{Generic stabilizers in actions of simple
  algebraic groups {I}: modules and first {G}rassmannian varieties}, in
  preparation, 2019.

\bibitem[GLMS97]{GLMS}
R.M. Guralnick, M.W. Liebeck, D.~Macpherson, and G.M. Seitz, \emph{Modules for
  algebraic groups with finitely many orbits on subspaces}, J. Algebra
  \textbf{196} (1997), 211--250.

\bibitem[GPT09]{GPT}
S.~Garibaldi, R.~Parimala, and J.-P. Tignol, \emph{Discriminant of symplectic
  involutions}, Pure Applied Math. Quarterly \textbf{5} (2009), no.~1,
  349--374.

\bibitem[Gue97]{Guerreiro}
M.~Guerreiro, \emph{Exceptional representations of simple algebraic groups in
  prime characteristic}, Ph.D. thesis, University of Manchester, 1997,
  arxiv:1210.6919.

\bibitem[Hum67]{Hum:p}
J.E. Humphreys, \emph{Algebraic groups and modular {L}ie algebras}, Memoirs of
  the American Mathematical Society, No. 71, American Mathematical Society,
  Providence, R.I., 1967.

\bibitem[Jan03]{Jantzen}
J.C. Jantzen, \emph{Representations of algebraic groups}, second ed., Math.
  Surveys and Monographs, vol. 107, Amer. Math. Soc., 2003.

\bibitem[Kar10]{Karp:ICM}
N.~Karpenko, \emph{Canonical dimension}, Proceedings of the International
  Congress of Mathematicians 2010, World Scientific, 2010.

\bibitem[Lev09]{Levy:Vinberg}
P.~Levy, \emph{Vinberg's {$\theta$}-groups in positive characteristic and
  {K}ostant-{W}eierstrass slices}, Transform. Groups \textbf{14} (2009), no.~2,
  417--461.

\bibitem[LMMR13]{LMMR13}
R.~L{\"o}tscher, M.~MacDonald, A.~Meyer, and Z.~Reichstein, \emph{Essential
  dimension of algebraic tori}, J. Reine Angew. Math. \textbf{677} (2013),
  1--13.

\bibitem[L{\"{o}}t13]{Loetscher:fiber}
R.~L{\"{o}}tscher, \emph{A fiber dimension theorem for essential and canonical
  dimension}, Compositio Math. \textbf{149} (2013), no.~1, 148--174.

\bibitem[LS12]{LiebeckSeitz}
M.~Liebeck and G.~Seitz, \emph{Unipotent and nilpotent classes in simple
  algebraic groups and {L}ie algebras}, Math. Surveys and Monographs, no. 180,
  Amer. Math. Soc., 2012.

\bibitem[L{\"u}b01]{luebeck}
F.~L{\"u}beck, \emph{Small degree representations of finite {C}hevalley groups
  in defining characteristic}, LMS J. Comput. Math. \textbf{4} (2001),
  135--169.

\bibitem[McN00]{McN:exterior}
G.J. McNinch, \emph{Semisimplicity of exterior powers of semisimple
  representations of groups}, J. Algebra \textbf{225} (2000), 646--666.

\bibitem[Mer13]{M:ed}
A.~Merkurjev, \emph{Essential dimension: a survey}, Transform. Groups
  \textbf{18} (2013), no.~2, 415--481.

\bibitem[Pop88]{APopov}
A.M. Popov, \emph{Finite isotropy subgroups in general position of simple
  linear {L}ie groups}, Trans. Moscow Math. Soc. (1988), 205--249, [Russian
  original: Trudy Moskov. Mat. Obschch. \textbf{50} (1987), 209--248, 262].

\bibitem[PS83]{PremetSup}
A.A. Premet and I.D. Suprunenko, \emph{The {W}eyl modules and the irreducible
  representations of the symplectic group with the fundamental highest
  weights}, Comm. Algebra \textbf{11} (1983), no.~12, 1309--1342.

\bibitem[PV94]{PoV}
V.L. Popov and E.B. Vinberg, \emph{Invariant theory}, Encyclopedia of
  Mathematical Sciences, vol.~55, pp.~123--284, Springer-Verlag, 1994.

\bibitem[Rei10]{Rei:ICM}
Z.~Reichstein, \emph{Essential dimension}, Proceedings of the International
  Congress of Mathematicians 2010, World Scientific, 2010.

\bibitem[RLYG12]{RLYG}
M.~Reeder, P.~Levy, J.-K. Yu, and B.H. Gross, \emph{Gradings of positive rank
  on simple {L}ie algebras}, Transform. Groups \textbf{17} (2012), no.~4,
  1123--1190.

\bibitem[SF88]{StradeF}
H.~Strade and R.~Farnsteiner, \emph{Modular {L}ie algebras and their
  representations}, Monographs and textbooks in pure and applied math., vol.
  116, Marcel Dekker, New York, 1988.

\bibitem[Ste63]{St:rep}
R.~Steinberg, \emph{Representations of algebraic groups}, Nagoya Math. J.
  \textbf{22} (1963), 33--56, [= Collected Papers, pp.~149--172].

\bibitem[Ste68]{St:end}
\bysame, \emph{Endomorphisms of linear algebraic groups}, Mem. Amer. Math. Soc.
  \textbf{80} (1968), 1--108, (= Collected Papers \#23).

\bibitem[VE78]{Vinberg:tri}
E.B. Vinberg and A.G. Elashvili, \emph{A classification of the three-vectors of
  nine-dimensional space}, Trudy Sem. Vektor. Tenzor. Anal. \textbf{18} (1978),
  197--233.

\bibitem[Vin76]{Vinberg:Weyl}
E.B. Vinberg, \emph{The {W}eyl group of a graded {L}ie algebra}, Math. USSR
  Izv. \textbf{10} (1976), 463--495.

\end{thebibliography}
\providecommand{\bysame}{\leavevmode\hbox to3em{\hrulefill}\thinspace}
\providecommand{\MR}{\relax\ifhmode\unskip\space\fi MR }
\providecommand{\MRhref}[2]{%
  \href{http://www.ams.org/mathscinet-getitem?mr=#1}{#2}
}
\providecommand{\href}[2]{#2}

\end{document}